\theoremstyle{plain}
\newtheorem{theorem}{Theorem}[section]
\newtheorem{lemma}[theorem]{Lemma}
\newtheorem{corollary}[theorem]{Corollary}
\theoremstyle{definition}
\newtheorem{definition}[theorem]{Definition}
\numberwithin{equation}{section}
\DeclareMathOperator*{\osc}{osc}
\let\orgdescriptionlabel\descriptionlabel
\renewcommand*{\descriptionlabel}[1]{%
	\let\orglabel\label
	\let\label\@gobble
	\phantomsection
	\edef\@currentlabel{#1}%
	\let\label\orglabel
	\orgdescriptionlabel{#1}%
}
\numberwithin{equation}{section}
\author{Ki-Ahm Lee}
\address{Department of Mathematical Sciences and Research Institute of Mathematics,
	Seoul National University, Seoul 08826, Republic of Korea.}
\email{kiahm@snu.ac.kr}
\author{Se-Chan Lee}
\address{Research Institute of Mathematics,
	Seoul National University, Seoul 08826, Republic of Korea.}
\email{dltpcks1@snu.ac.kr}
\author{Hyungsung Yun}
\address{Department of Mathematical Sciences,
	Seoul National University, Seoul 08826, Republic of Korea.}
\email{euler@snu.ac.kr}
\subjclass[2020]{35B65, 35D40, 35K20, 35K65, 35K67}
\keywords{Regularity, fully nonlinear equations, degenerate parabolic equations, singular parbolic equations}
\thanks{Ki-Ahm Lee is supported by the National Research Foundation of Korea (NRF) grant funded by the Korea government (MSIP): NRF-2020R1A2C1A01006256. Se-Chan Lee is supported by Basic Science Research Program through the National Research Foundation of Korea (NRF) funded by the Ministry of Education (2022R1A6A3A01086546).}
\begin{document}
\title[Degenerate/singular fully nonlinear parabolic equations]{$C^{1, \alpha}$-regularity for solutions of degenerate/singular fully nonlinear parabolic equations}
\maketitle

\begin{abstract}
	We establish the interior $C^{1,\alpha}$-estimate for viscosity solutions of degenerate/singular fully nonlinear parabolic equations
$$u_t=|Du|^{\gamma}F(D^2u) + f.$$
For this purpose, we prove the well-posedness of the regularized Dirichlet problem
\begin{equation*} 
	\left\{ \begin{aligned}
		u_t&=(1+|Du|^2)^{\gamma/2}F(D^2u) &&\text{in $Q_1$}\\
		u&=\varphi &&\text{on $\partial_p Q_1$}.
	\end{aligned}\right.
\end{equation*}
Our approach utilizes the Bernstein method with approximations in view of difference quotient.
\end{abstract}
%
%
\section{Introduction}\label{sec:introduction}
This paper is devoted to the study of the $C^{1, \alpha}$-regularity for viscosity solutions of the following degenerate/singular fully nonlinear parabolic equations
\begin{align} \label{eq:model_f}
	u_t=|Du|^{\gamma}F(D^2u) + f,
\end{align}
where $\gamma>-1$, and $f$ is bounded and continuous. Here the operator $F$ is uniformly elliptic with certain structural conditions (the hypotheses on $F$ will be precisely stated in \Cref{sec:preliminaries}). The fully nonlinear parabolic equation \eqref{eq:model_f} is motivated by the Hamilton-Jacobi-Bellman equations of the time-dependent, two-player stochastic differential games. In other words, the viscosity solution of \eqref{eq:model_f} can be realized as the value function of an associated stochastic control problem; see e.g. \cite{CSTV07, FS93, FS89, KS10}.

Thanks to the reduction scheme presented in \cite{Att20, AR20}, it suffices to concentrate on the following homogeneous equations
\begin{align} \label{eq:model}
	u_t=|Du|^{\gamma}F(D^2u)
\end{align}
instead of nonhomogeneous one \eqref{eq:model_f}. Therefore, our main theorem is concerned with $C^{1, \alpha}$-regularity of viscosity solutions $u$ of \eqref{eq:model} with uniform estimates provided that $F$ is convex.
\begin{theorem}\label{thm:C1alpha}
	Assume that  $\gamma>-1$, $F$ satisfies \ref{F1}, \ref{F2}, and \ref{F3}, and $f$ is bounded and continuous in $Q_1$. Let $u$ be a viscosity solution of \eqref{eq:model_f} in $Q_1$. Then there exists a constant $ \alpha \in (0,1)$ depending only on $n$, $\lambda$, $\Lambda$, $\gamma$, $\|u\|_{L^{\infty}(Q_1)}$, and $\|F\|_{C^{1,1}(\mathcal{S}^n)}$ such that $u \in C^{1, \alpha}(\overline{Q_{1/2}})$ with an estimate
	\begin{align*}
		\|Du\|_{C^{\alpha} (\overline{Q_{1/2}})} \leq C,
	\end{align*}
where $C>0$ is a constant depending only on $n$, $\lambda$, $\Lambda$, $\gamma$, $\|u\|_{L^{\infty}(Q_1)}$, and $\|f\|_{L^{\infty}(Q_1)}$. Moreover, it holds that 
\begin{equation*} 
	|u(x,t)-u(x,s)| \leq C |t-s|^{\frac{1+\alpha}{2-\alpha \gamma}}  \quad \text{for all } (x,t),(y,s) \in Q_{1/2}.
\end{equation*}
\end{theorem}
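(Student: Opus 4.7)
The plan is to prove \Cref{thm:C1alpha} via a regularization-plus-Bernstein argument. By the reduction scheme of \cite{Att20, AR20}, the bounded inhomogeneity $f$ can be absorbed, so it suffices to establish the $C^{1,\alpha}$ estimate for viscosity solutions of the homogeneous equation \eqref{eq:model}. Given such a solution $u$, I would approximate it by solutions $u^\varepsilon$ of the regularized Dirichlet problem
\begin{equation*}
    u^\varepsilon_t = (\varepsilon^2 + |Du^\varepsilon|^2)^{\gamma/2} F(D^2 u^\varepsilon) \quad \text{in } Q_1, \qquad u^\varepsilon = u \text{ on } \partial_p Q_1,
\end{equation*}
whose well-posedness is ensured by the existence result described in the abstract (after a suitable rescaling). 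Since the regularized equation is smooth and locally uniformly parabolic for each fixed $\varepsilon > 0$, classical parabolic theory guarantees that $u^\varepsilon$ is a $C^{2,\alpha}$ solution, and stability of viscosity solutions yields $u^\varepsilon \to u$ locally uniformly as $\varepsilon \to 0$.

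The heart of the proof is to establish $C^{1,\alpha}$-bounds for $u^\varepsilon$ that are \emph{uniform} in $\varepsilon$. Following the Bernstein strategy, I would work with spatial difference quotients $w_h(x,t) = [u^\varepsilon(x+h,t) - u^\varepsilon(x,t)]/|h|$ and construct an auxiliary function $W(x,t) = \eta(x,t)\,|w_h|^2 + \mu\,|u^\varepsilon|^2$ with a suitable cutoff $\eta$ and constant $\mu>0$. Subtracting the regularized equation at $x$ and at $x+h$ and invoking the convexity and the $C^{1,1}$-regularity of $F$, the function $w_h$ satisfies a linear parabolic inequality whose leading coefficients are controlled by $(\varepsilon^2 + |Du^\varepsilon|^2)^{\gamma/2}$ times a matrix in the Pucci class. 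Applying the parabolic maximum principle to $W$ at an interior maximum produces an $h$-independent and $\varepsilon$-independent Lipschitz bound for $u^\varepsilon$ on smaller cylinders; sending $h \to 0$ then yields $\|Du^\varepsilon\|_{L^\infty(Q_{3/4})} \le C$.

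Once the uniform Lipschitz estimate is established, each $w_h$ satisfies a linear parabolic equation whose coefficients lie in the Pucci class with ellipticity constants depending only on $\|Du^\varepsilon\|_{L^\infty}$ (the region $\{|Du^\varepsilon|\ll 1\}$ being handled by an intrinsic scaling/iteration argument). Parabolic Krylov--Safonov Hölder estimates applied to $w_h$ give a uniform Hölder bound on the difference quotients, hence a uniform $C^{1,\alpha}$-bound on $u^\varepsilon$; passing $\varepsilon \to 0$ delivers the desired spatial estimate for $u$. The time regularity with exponent $(1+\alpha)/(2-\alpha\gamma)$ then follows from the intrinsic parabolic scaling of \eqref{eq:model}: the rescaling $v(x,t) = r^{-(1+\alpha)}\,u(x_0+rx,\,t_0+r^{2-\alpha\gamma}t)$ preserves both the structure of the equation and the spatial $C^{1,\alpha}$-seminorm, so the oscillation of $u$ on a time interval of length $r^{2-\alpha\gamma}$ is bounded by $C r^{1+\alpha}$, yielding the stated modulus.

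The main obstacle is the uniform-in-$\varepsilon$ Bernstein step: the coefficient $(\varepsilon^2 + |Du^\varepsilon|^2)^{\gamma/2}$ degenerates for $\gamma>0$ and is singular for $-1<\gamma<0$ precisely where $|Du^\varepsilon|$ is small, so the Bernstein auxiliary function $W$ and the exponents appearing in it must be tuned so that the dangerous lower-order terms in the resulting Bernstein inequality either cancel by the convexity of $F$ or are absorbed into the good second-order terms at the maximum point of $W$, using the $C^{1,1}$ hypothesis on $F$ in an essential way. This adaptation of the classical Bernstein technique to difference quotients of a merely viscosity solution of a regularized equation is the central innovation required.
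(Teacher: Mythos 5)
Your overall architecture (reduce to the homogeneous equation, regularize, prove $\varepsilon$-uniform estimates, pass to the limit and identify the limit by comparison) matches the paper's, but the two decisive estimates are left with genuine gaps. First, the uniform Lipschitz bound. You propose a Bernstein/maximum-principle argument on $W=\eta\,|w_h|^2+\mu\,|u^\varepsilon|^2$, but after differencing the regularized equation the second-order coefficients carry the factor $(\varepsilon^2+|Du^\varepsilon|^2)^{\gamma/2}$, the good quadratic term $-2(\varepsilon^2+|Du^\varepsilon|^2)^{\gamma/2}a_{ij}D_{ki}u\,D_{kj}u$ degenerates for $\gamma>0$, and the drift coefficient $\gamma(\varepsilon^2+|Du^\varepsilon|^2)^{\gamma/2-1}F(D^2u^\varepsilon)\,Du^\varepsilon$ blows up for $\gamma<0$, precisely on the set where $|Du^\varepsilon|\lesssim\varepsilon$; no choice of $\eta$, $\mu$ is exhibited that closes the maximum-point inequality uniformly in $\varepsilon$. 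You yourself flag this as ``the main obstacle'' and ``the central innovation required'' without supplying it. The paper sidesteps it: the $\varepsilon$-uniform Lipschitz estimate (\Cref{loglipschitz}, \Cref{lipschitz}) is proved by the Ishii--Lions doubling method with the modulus $\phi(r)=-r\log r$, where the relevant test gradient has modulus at least $L_1$, so the factor $(\varepsilon^2+|p|^2)^{\gamma/2}$ is automatically controlled; the Bernstein computation on difference quotients is used only in Section 3, for the a priori estimates of the non-degenerate problem $(1+|Du|^2)^{\gamma/2}F(D^2u)$, where no uniformity in a small parameter is required.

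Second, and more seriously, the uniform $C^{1,\alpha}$ estimate. Krylov--Safonov applied to the linearized equation for $w_h$ cannot give an $\varepsilon$-independent H\"older bound, because the linearized operator $(\varepsilon^2+|Du^\varepsilon|^2)^{\gamma/2}F_{ij}(D^2u^\varepsilon)D_{ij}+\cdots$ has ellipticity that is not controlled uniformly in $\varepsilon$ on $\{|Du^\varepsilon|\lesssim\varepsilon\}$; your parenthetical ``intrinsic scaling/iteration argument'' defers exactly the content that constitutes the proof. What is needed, and what the paper (following Imbert--Jin--Silvestre) actually carries out, is a dichotomy at every intrinsic scale $Q_{\tau^i}^{(1-\delta)^i}$: either a measure-density condition holds in every direction, in which case a weak-Harnack argument (\Cref{lem:Du<1-d}, \Cref{cor:Du<1-d}) forces $|Du^\varepsilon|$ to decay geometrically until $\varepsilon^2(1-\delta)^{-2m}\sim1$ and the rescaled equation becomes uniformly parabolic so that Schauder theory applies; or it fails in some direction $e$, in which case $Du^\varepsilon$ is close to $e$ in measure, $u^\varepsilon$ is $\eta$-close to a nondegenerate linear function (this requires the oscillation-in-time lemmas \Cref{lem:osc1}, \Cref{lem_osc2}, \Cref{lem:small_meas}), and the small-perturbation regularity theorem (\Cref{small_pert_sol}) yields $C^{2,\tilde\alpha}$ bounds. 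None of this improvement-of-flatness machinery appears in your outline, so neither the uniform H\"older bound on $Du^\varepsilon$ nor the time modulus $|t-s|^{(1+\alpha)/(2-\alpha\gamma)}$ (which in the paper is extracted from the same iteration, not from a standalone scaling identity) is established.
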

In order to prove \Cref{thm:C1alpha}, we investigate the solvability of the Dirichlet problem which is generated by the regularization of \eqref{eq:model}:
\begin{equation} \label{eq:mean}
	\left\{ \begin{aligned}
		u_t&=(1+|Du|^2)^{\gamma/2}F(D^2u) &&\text{in $Q_1$}\\
		u&=\varphi &&\text{on $\partial_p Q_1$}.
	\end{aligned}\right.
\end{equation}
\begin{theorem}\label{thm:solv}
	Suppose that $\gamma>-2$, $F$ satisfies \ref{F1}, \ref{F2}, and \ref{F3}. Let $\varphi \in C^{2, \beta}(\overline{Q_1})$ for some $\beta \in (1/2, 1)$, and satisfy the compatibility condition:
	\begin{align*}
		\varphi_t=(1+|D\varphi|^2)^{\gamma/2}F(D^2 \varphi) \quad \text{on $\partial_c Q_1$},
	\end{align*}	
	where $\partial_c Q_1 \coloneqq \partial B_1  \times \{ -1 \}$. Then the Dirichlet problem \eqref{eq:mean} is uniquely solvable in $C^{2, \beta}(\overline{Q_1})$.
\end{theorem}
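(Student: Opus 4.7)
The plan is to apply the method of continuity. For $\sigma \in [0,1]$ consider the family of Dirichlet problems
\begin{equation*}
\left\{\begin{aligned}
u_t &= \sigma\,(1+|Du|^2)^{\gamma/2} F(D^2 u) + (1-\sigma)\Delta u && \text{in } Q_1,\\
u &= \varphi_\sigma && \text{on } \partial_p Q_1,
\end{aligned}\right.
\end{equation*}
with $\varphi_\sigma \in C^{2,\beta}(\overline{Q_1})$ an interpolation of boundary data satisfying $\varphi_1 = \varphi$ and the $\sigma$-dependent compatibility identity on $\partial_c Q_1$. At $\sigma = 0$ this is the heat equation, uniquely solvable in $C^{2,\beta}$ by classical Schauder theory. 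Let $I \subset [0,1]$ be the set of $\sigma$ for which the problem is solvable in $C^{2,\beta}(\overline{Q_1})$. Openness of $I$ is handled by the implicit function theorem applied to the linearization at a known solution: since $F \in C^{1,1}(\mathcal{S}^n)$ and the solution lies in $C^{2,\beta}$, the linearization is a uniformly parabolic linear operator with H\"older coefficients, invertible between the relevant $C^{2,\beta}$ spaces with zero boundary data. Uniqueness follows from the parabolic comparison principle once the Lipschitz bound below renders the equation uniformly parabolic. Closedness of $I$, and hence existence at $\sigma = 1$, reduces to a uniform a priori $C^{2,\beta}$-estimate.

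The a priori estimate is assembled in four steps. First, an $L^\infty$-bound from comparison with constant barriers. Second, a Lipschitz estimate $\|Du\|_{L^\infty(Q_1)} \leq C$ obtained by the Bernstein technique, adapted to viscosity solutions via finite-difference approximations as flagged in the abstract: test an auxiliary function of the form $|D^h u|^2 + \mu u^2$, where $D^h$ is a spatial difference quotient, derive a uniformly parabolic differential inequality using the $C^{1,1}$-regularity of $F$, apply the maximum principle, and pass to the limit $h \to 0$. Third, boundary $C^{2,\beta}$-estimates from Krylov's boundary technique for convex fully nonlinear parabolic equations, applicable after the Lipschitz bound freezes the $p$-dependence and the compatibility condition furnishes regular boundary barriers. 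Fourth, interior $C^{2,\alpha}$-estimates from the Evans--Krylov theorem, applicable because $(1+|p|^2)^{\gamma/2} F(M)$ is convex in $M$ by \ref{F3} and Lipschitz in $p$ on bounded sets. A Schauder bootstrap on the resulting linear equation then improves the H\"older exponent to $\beta$, and combining interior and boundary estimates yields a global $C^{2,\beta}$-bound uniform in $\sigma$.

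The main obstacle is the Lipschitz estimate. The classical Bernstein method differentiates the equation twice, which is not justified for viscosity solutions; the substitute via difference quotients requires careful bookkeeping to ensure that the inequality satisfied by $|D^h u|^2$ is uniformly parabolic with coefficients bounded independently of both $h$ and $\sigma$, a point that becomes delicate in the singular regime $\gamma \in (-2,0)$ where the regularization is essential. A secondary but non-trivial issue is constructing the deformed data $\varphi_\sigma$ so that the $\sigma$-dependent compatibility identity holds for every $\sigma$; this is handled by an explicit correction of $\varphi$ near $\partial_c Q_1$, exploiting that $\varphi_1 = \varphi$ already satisfies the compatibility at $\sigma = 1$ and that the heat-equation compatibility at $\sigma = 0$ can be met by a suitable time-dependent modification. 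Once these two points are resolved, the remaining ingredients are standard.
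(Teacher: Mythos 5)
Your route (method of continuity from the heat equation, closed by a full a priori $C^{2,\beta}$-estimate) differs from the paper's, which runs a Leray--Schauder fixed-point argument: the map $T$ sends $v$ to the solution of the frozen-gradient problem $u_t=(1+|Dv|^2)^{\gamma/2}F(D^2u)$, so the $C^{2,\bar\alpha}$-regularity of the candidate solutions comes for free from the convex fully nonlinear theory (Wang/Krylov), and only a uniform \emph{$C^{1,\alpha}$} a priori bound for the family $u=\sigma Tu$ is needed. This is a real economy: you must additionally justify openness (Fr\'echet differentiability of the Nemytskii map for $F\in C^{1,1}$ between parabolic H\"older spaces) and produce deformed data $\varphi_\sigma$ satisfying a $\sigma$-dependent \emph{nonlinear} compatibility identity on $\partial_c Q_1$ for every $\sigma$, which you assert but do not construct; the fixed-point framing needs compatibility only for the single operator defining $T$.

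The concrete gap is in your Step 2. A Bernstein/maximum-principle argument applied to $|D^hu|^2+\mu u^2$ (or the paper's viscosity version with the exponential change of variables) only reduces the interior gradient bound to the bound of $|Du|$ on $\partial_p Q_1$: the auxiliary function can attain its maximum on the lateral boundary, and the term $\mu u^2$ does not control the normal derivative there. You therefore need a separate boundary gradient estimate \emph{before} the global Lipschitz bound, not after it as your ordering suggests. This is precisely where the paper works hardest (\Cref{lem:bdry_Du}): logarithmic barriers $w^\pm=\varphi\pm A^{-1}\log(1+Ad/B)$ whose construction succeeds exactly when $\gamma>-2$, because for $\gamma<0$ the factor $(1+|Dw^\pm|^2)^{\gamma/2}\gtrsim B^{-\gamma}$ must still be beaten by the $-\lambda A/B^2$ term coming from $\mathcal{M}^-(Dd\otimes Dd)$. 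Your proposal never uses the hypothesis $\gamma>-2$, which signals that this step is missing rather than merely compressed; without it the Lipschitz bound, and hence every subsequent estimate and the comparison-principle uniqueness, is unsupported. The remaining ingredients (Evans--Krylov in the interior after the gradient is bounded --- note convexity in $M$ is \ref{F2}, not \ref{F3} --- and global Schauder with compatibility at the corner) are consistent with the paper's machinery.
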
 
It is noteworthy that \eqref{eq:mean} can be understood as one of the fully nonlinear generalizations of the mean curvature equation
\begin{equation} \label{eq:mcf}
	u_t = \textnormal{div} \, \frac{Du}{(1+|Du|^2)^{1/2}} = (1+|Du|^2)^{-1/2} a_{ij}(Du) D_{ij} u,
\end{equation}
where $a_{ij}(p) \coloneqq \delta_{ij} - p_i p_j/(1+|p|^2)$. Here we remark that
\begin{equation*}
	\frac{|\xi|^2}{1+K^2} \leq \frac{|\xi|^2}{1+|p|^2} \leq a_{ij}(p) \xi_i \xi_j \leq |\xi|^2 \quad \text{for $\xi \in \mathbb{R}^n$ and $|p| \leq K$.}
\end{equation*}
In other words, the ellipticity $\Lambda/\lambda$ of mean curvature-type equations is finite provided that $Du$ is bounded, but it is heavily influenced by $Du$.

Let us summarize the results of the preceding literature which deals with degenerate/singular equations in non-divergence form. We first collect the solvability results of Dirichlet problems in different settings. If we replace the first equation in \eqref{eq:mean} by quasilinear equations such as
\begin{equation}\label{eq:IJS19_reg}
	u_t =(\varepsilon^2+|Du|^2)^{\gamma/2}\left(\delta_{ij} + \frac{(p-2)D_i u D_j u }{\varepsilon^2+|Du|^2} \right)D_{ij}u 
\end{equation}
or $u_t = a_{ij}(x,t,u,Du)D_{ij} u + b(x,t,u,Du)$, then the existence, uniqueness, and smoothness of the solution were illustrated in the comprehensive books \cite{LSU68, Lie96} and references therein. Moreover, for fully nonlinear parabolic operators $F=F(x, t, r, p, M)$ enjoying several structural conditions, the well-posedness of corresponding Dirichlet problems was formulated in \cite{CKLS99, CKS00, dST17, Kry18}. Nonetheless, in the aforementioned papers, the uniform Lipschitz continuity with respect to the gradient variable $p$ is the main assumption on $F$, which is not satisfied by our operator $F(x, t, r, p, M)=(1+|p|^2)^{\gamma/2}F(M)$. Finally, the existence, uniqueness, and global H\"older regularity results for viscosity solutions of the Dirichlet problem
\begin{equation*}
	\left\{ \begin{aligned}
		u_t&=|Du|^{\gamma}F(D^2u) &&\text{in $Q_1$}\\
		u&=\varphi &&\text{on $\partial_p Q_1$}.
	\end{aligned}\right.
\end{equation*}
were demonstrated in \cite{BM17, Dem11} under appropriate structural conditions on $F$, $f$, and $\varphi$. However, to the best of our knowledge, the higher regularity than H\"older continuity of viscosity solutions $u$ of \eqref{eq:model_f} is unknown in literature.

Let us move on to the interior regularity results. For the quasilinear variant instead of $F$ in \eqref{eq:model_f}, the $C^{1, \alpha}$-regularity for a viscosity solution $u$ of 
\begin{equation} \label{eq:IJS19}
	u_t=|Du|^{\gamma}\Delta_p^Nu\coloneqq|Du|^{\gamma}  (\delta_{ij} + (p-2) |Du|^{-2} D_i u D_j u   ) D_{ij}u
\end{equation}
was established in \cite{IJS19, JS17} when $p>1$ and $\gamma>-1$. They suggested two alternatives to describe the oscillation of $Du$ with uniform estimates for approximated solutions; we will present a more precise explanation of their analysis, and compare it with ours later. In \cite{FZ21, FZ23}, they provided similar consequences for an extended class of quasilinear parabolic equations. As mentioned before, \cite{Att20, AR20} verified that the interior $C^{1, \alpha}$-regularity result for homogeneous equations can be transferred to the one for nonhomogeneous equations, in both fully nonlinear and quasilinear settings.

On the other hand, the elliptic analogue of \eqref{eq:model_f} has been relatively widely studied in the last decade. To be precise, the local $C^{1, \alpha}$-regularity result for 
\begin{equation} \label{eq:IS13}
	|Du|^{\gamma}F(D^2u)=f
\end{equation}
was developed in \cite{IS13}. Later, the optimality of the exponent $\alpha$ \cite{ART15} and the global regularity result \cite{BD14} were investigated for a similar class of elliptic operators. There are many recent papers on fully nonlinear elliptic equations with generalized degeneracy or singularity; we refer to \cite{APPT22, AS23, BBLL22a, BBLL22b, BPRT20, dSJRR21, FRZ21}. 

We now display various approaches to achieve the H\"older estimate of the gradient in literature, and then outline the strategy of proof of our main theorem. In the elliptic setting, Imbert and Silvestre \cite{IS13} developed the interior $C^{1, \alpha}$-regularity for a viscosity solution $u$ of \eqref{eq:IS13} with $\gamma \geq 0$ and $f$ is bounded and continuous. In short, they first approximated $u$ by $u_k$ which is a viscosity solution of some modified equations with source terms $\|f_k\|_{L^{\infty}(B_1)} \to 0$. Then by taking a limit and applying `cutting lemma', they discovered that the limit function $u_{\infty}$ satisfies a homogeneous equation which guarantees the desired regularity. Nevertheless, the existence of the time derivative term $u_t$, which is not necessarily bounded in $L^{\infty}$-norm, prevents us from adopting the same idea to the parabolic equation \eqref{eq:model}. 


In the parabolic setting, Imbert, Jin, and Silvestre \cite{IJS19} proved interior $C^{1, \alpha}$-regularity for a viscosity solution $u$ of degenerate/singular quasilinear parabolic equations \eqref{eq:IJS19} with $p>1$ and $\gamma>-1$. The key ingredients of their proof were the existence of a smooth solution $u^{\varepsilon}$ that satisfies \eqref{eq:IJS19_reg} with prescribed boundary data, and the uniform $C^{1,\alpha}$-estimates for $u^{\varepsilon}$. Even though their approach for uniform estimates is also available for degenerate/singular fully nonlinear parabolic equations, the major challenge arises from the fact that the solvability of the Dirichlet problem \eqref{eq:mean} is still unknown. 

In view of Schuader fixed point theorem, the solvability of Dirichlet problems essentially follows from a priori estimate for associated problems. In the quasilinear setting, the key step in Bernstein technique for a priori estimate can be described as follows: the quantity $v\coloneqq |Du|^2$ can be regarded as a weak solution of a linear equation in divergence form, which leads to the weakened regularity assumption on $u$ from $C^3$ to $C^2$; see \cite{GT01, LSU68, Lie96} for details. However, such weak formulation strongly exploits the quasilinear structure of corresponding equations, and so it cannot be applied to our problem \eqref{eq:mean} which exhibits the fully nonlinear character. 

To overcome such difficulty, we develop gradient estimates by employing a modified version of Bernstein technique. To be precise, in order to avoid differentiating $u$ three times, we approximate the quantity $v = |Du|^2$ in terms of the difference quotient:
\begin{align*}
v^h(x, t) \coloneqq \sum_{k=1}^n \left(\frac{u(x+he_k, t)-u(x,t)}{h}\right)^2.
\end{align*}
Then it turns out that $v^h$ is a subsolution of certain parabolic equations in the classical sense, and so $v$ becomes a subsolution of similar parabolic equations in the viscosity sense, by passing the limit together with the stability theorem. In this viscosity formulation, the stronger $C^{2,\beta}$-regularity assumption must be imposed on $u$ rather than $C^2$, but it remains valid for our purpose to deduce the solvability of \eqref{eq:mean}.


This paper is organized as follows. \Cref{sec:preliminaries} consists of several notations, definitions, and auxiliary results for our main theorem. \Cref{sec:mean} is devoted to the proof of \Cref{thm:solv} based on a priori estimates. In \Cref{sec:approx}, we derive the uniform estimates for approximated solutions and then prove our main theorem, \Cref{thm:C1alpha}.
%
%
\section{Preliminaries}\label{sec:preliminaries}
\subsection{Notations}
We summarize some basic notations as follows.
\begin{enumerate}[label=(\roman*)]
\item Points: For $x=(x_1, \cdots,x_n) \in \mathbb{R}^n$, we denote 
$$x'=(x_1, \cdots,x_{n-1}) \in \mathbb{R}^{n-1}, ~ X=(x,t) \in \mathbb{R}^{n+1},   \text{ and }  O=(0,\cdots,0)\in \mathbb{R}^{n + 1}. $$ 
\item Sets: For a point $Y=(y,s) \in \mathbb{R}^{n+1}$ and $r>0$, we denote the cylinder as
\begin{align*}
	Q_r(Y) &= \{x \in  \mathbb{R}^n :|x-y|< r \}  \times (s -r^2 , s]. 
\end{align*}
Moreover, we define the bottom, corner, side, and parabolic boundary as
\begin{align*}
	\partial_b Q_r(Y) & = \{x \in  \mathbb{R}^n :|x-y|< r \}  \times \{ t= s-r^2 \}, \\
	\partial_c Q_r(Y) & = \{x \in  \mathbb{R}^n :|x-y| = r \}  \times \{ t= s-r^2 \}, \\
	\partial_s Q_r(Y) & =  \{x \in  \mathbb{R}^n :|x-y|= r \}  \times (s -r^2 , s), \\
	\partial_p Q_r(Y) & = \partial_b Q_r(y,s) \cup \partial_c Q_r(y,s) \cup \partial_s Q_r(y,s). 
\end{align*}
For convenience, we denote $Q_r = Q_r(O)$. 
\item Pucci's operators: Given ellipticity constants $0<\lambda \leq \Lambda$, we denote \textit{Pucci's operators} as follows: for $M \in \mathcal{S}^n \coloneqq \{ M : \text{$M$ is a $n\times n$ real symmetric matrix\}}$,
	\begin{align*}
		\mathcal{M}_{\lambda, \Lambda}^{+}(M) = \mathcal{M}^{+}(M) & \coloneqq \sup_{\lambda I \leq A \leq \Lambda I} \text{tr} (AM) ,\\
		\mathcal{M}^{-}_{\lambda, \Lambda}(M)  = \mathcal{M}^{-}(M) & \coloneqq \inf_{\lambda I \leq A \leq \Lambda I} \text{tr} (AM).
	\end{align*}

\item Distance functions:  Let $\Omega \subset \mathbb{R}^{n+1}$ be an open set and the parabolic distance function $d : \overline{\Omega} \times \overline{\Omega} \to [0,\infty) $ from $X=(x,t)$ to $Y=(y,s)$ is given by
\begin{equation*}
	d(X,Y) = \max \left\{ |x-y|, \sqrt{|t-s|}  \right\} .
\end{equation*}
\item Partial derivatives:
	We denote partial derivatives of $u$ as subscriptions.
\begin{equation*}
	u_t = \partial_t u = \frac{\partial u}{\partial t} , \quad  D_iu =\frac{\partial u}{\partial x_i} , \quad \text{and} \quad  D_{ij} u = \frac{\partial^2 u}{\partial x_i \partial x_j} .
\end{equation*}
\item Hölder spaces: Let $\Omega \subset \mathbb{R}^{n+1}$ be an open set and $\alpha \in (0,1)$.
\begin{itemize}
	\item $u \in C^{\alpha}(\overline{\Omega})$ means that there exists $C>0$ such that 
$$|u(X)-u(Y)| \leq Cd(X,Y)^{\alpha} \quad \text{for all } X, Y \in \Omega.$$
In other words, $u$ is $\frac{\alpha}{2}$-H\"older continuous in $t$ and  $\alpha$-H\"older continuous in $x$.
	\item $u \in C^{1,\alpha}(\overline{\Omega})$ means that $u$ is $\frac{\alpha + 1}{2}$-H\"older continuous in $t$ and $Du$ is $\alpha$-H\"older continuous in $x$.
	\item $u \in C^{2,\alpha}(\overline{\Omega})$ means that $u_t$ is $\frac{\alpha}{2}$-H\"older continuous in $t$ and $D^2u$ is $\alpha$-H\"older continuous in $x$.
\end{itemize}
\end{enumerate}
\subsection{Hypotheses on $F$}
We assume that the fully nonlinear operator $F:\mathcal{S}^n \to \mathbb{R}$ satisfies the following conditions:
\begin{enumerate} [label=\text{(F\arabic*)}]
\item \label{F1} $F$ is uniformly elliptic with $F(0)=0$; that is,  there exist constants $0<\lambda \leq \Lambda$ such that for any $M,N \in \mathcal{S}^n$, we have 
	$$\mathcal{M}^{-}(M-N) \leq F(M) - F(N) \leq \mathcal{M}^{+}(M-N).$$ 
\item \label{F2} $F$ is convex.
\item \label{F3} $F \in C^{1,1}(\mathcal{S}^n)$.
\end{enumerate}
\subsection{Viscosity solutions and comparison principles}
\begin{definition} [Test functions]
	Let $u$ be a continuous function in $Q_1 $. The function $\varphi : Q_1 \to \mathbb{R}$ is called \textit{test function} if it is $C^1$ with respect to $t$ and $C^2$ with respect to $x$.
	\begin{enumerate} [label=(\roman*)]
		\item We say that the test function $\varphi$ touches $u$ from above at $(x,t)$ if there exists an open neighborhood $U$ of $(x,t)$ such that 
		$$u \leq \varphi  \quad \mbox{in } U \qquad  \mbox{and} \qquad u(x,t) = \varphi(x,t). $$
		\item We say that the test function $\varphi$ touches $u$ from below at $(x,t)$ if there exists an open neighborhood $U$ of $(x,t)$ such that 
		$$u \ge \varphi  \quad \mbox{in } U \qquad  \mbox{and} \qquad u(x,t) = \varphi(x,t). $$
	\end{enumerate}
\end{definition}

\begin{definition} [Viscosity solutions]\label{vis1}
 Let $u$ be a function defined in $Q_1$.
 \begin{enumerate} [label=(\roman*)]
 \item Let $u$ be a upper semicontinuous function in $Q_1$. $u$ is called a \textit{viscosity subsoution} of \eqref{eq:model} in $Q_1 $ when the following condition holds: if for any $(x,t) \in Q_1 $ and any test function $\varphi$ touching $u$ from above at $(x,t)$, then
\begin{equation*}
	\varphi_t (x,t) \leq |D\varphi(x,t)|^{\gamma} F ( D^2 \varphi (x,t) ).
\end{equation*}
 \item Let $u$ be a lower semicontinuous function in $Q_1$. $u$ is called a \textit{viscosity supersoution} of \eqref{eq:model} in $Q_1 $ when the following condition holds: if for any $(x,t) \in Q_1 $ and any test function $\varphi$ touching $u$ from below at $(x,t)$, then
\begin{equation*}
	\varphi_t (x,t) \ge |D\varphi(x,t)|^{\gamma} F ( D^2 \varphi (x,t) ).
\end{equation*}
\end{enumerate}
Note that this definition of viscosity solutions can be extended to other fully nonlinear parabolic equations in a natural way.
\end{definition}

We now introduce a concept of parabolic semijets suggested in \cite[Section 8]{CIL92}.
\begin{definition}[Parabolic semijets]\label{semijet}
	 Let $u$ be a function defined in $Q_1$ and let $(x, t)\in Q_1$.
	\begin{enumerate}[label=(\roman*)]
		\item A \textit{parabolic superjet} $\mathcal{P}^{2, +}u(x, t)$ consists of $(a, p, M) \in \mathbb{R} \times \mathbb{R}^n \times \mathcal{S}^n$ which satisfy
		\begin{align*}
			u(y, s) &\leq u(x,t)+a(s-t)+\langle p, y-x \rangle \\
			&\qquad +\frac{1}{2} \langle M(y-x), y-x \rangle+o(|s-t|+|z-x|^2) \quad \text{as $(y, s) \to (x, t)$}. 
		\end{align*}
		Similarly, we can define a \textit{parabolic subjet} $\mathcal{P}^{2, -}u(x, t)$. It immediately follows that
		\begin{align*}
			\mathcal{P}^{2, -}u(x, t)=-\mathcal{P}^{2, +}(-u)(x, t).
		\end{align*} 
	
	\item A \textit{limiting superjet} $\overline{\mathcal{P}}^{2, +}u(x, t)$ consists of $(a, p, M) \in \mathbb{R} \times \mathbb{R}^n \times \mathcal{S}^n$ such that
		\begin{align*}
		\exists (x_n, t_n, a_n, p_n, M_n)  \ \text{with $(a_n, p_n, M_n) \in \mathcal{P}^{2, +}u(x_n, t_n)$ and } \\
		\text{$(x_n, t_n, u(x_n, t_n), a_n, p_n, M_n) \to (x, t, u(x, t), a, p, M)$} \quad \text{as } n \to \infty.
	\end{align*}
	We define a \textit{parabolic subjet} $\overline{\mathcal{P}}^{2, -}u(x, t)$ in a similar way.
	\end{enumerate}	
\end{definition}

Then we characterize viscosity sub/supersolutions in terms of parabolic semijets.
\begin{lemma}\label{lem:semijet}
	Let $u$ be a upper semicontinuous function in $Q_1$. Then $u$ is a viscosity subsolution of \eqref{eq:model} if and only if
	\begin{align*}
		a \leq |p|^{\gamma} F(M) \quad \text{for $(x, t) \in Q_1$ and $(a, p, M) \in \mathcal{P}^{2, +}u(x,t)$.}
	\end{align*}
\end{lemma}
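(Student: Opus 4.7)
The plan is to establish the two implications separately. The ``if'' direction comes from Taylor expanding a given test function, while the converse requires producing a smooth test function that realizes a prescribed parabolic superjet.

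For the implication ($\Leftarrow$), assume the semijet condition holds, and let $\varphi$ be a test function touching $u$ from above at $(x,t) \in Q_1$. Since $\varphi$ is $C^1$ in $t$ and $C^2$ in $x$, Taylor expansion at $(x,t)$ yields
\begin{equation*}
\varphi(y,s) = \varphi(x,t) + \varphi_t(x,t)(s-t) + \langle D\varphi(x,t), y-x\rangle + \tfrac{1}{2}\langle D^2\varphi(x,t)(y-x), y-x\rangle + o(|s-t|+|y-x|^2).
\end{equation*}
Combined with $u \leq \varphi$ near $(x,t)$ and $u(x,t) = \varphi(x,t)$, this shows that the triple $(\varphi_t(x,t), D\varphi(x,t), D^2\varphi(x,t))$ belongs to $\mathcal{P}^{2,+}u(x,t)$, and the hypothesis delivers precisely the subsolution inequality required by \Cref{vis1}.

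For the implication ($\Rightarrow$), suppose $u$ is a viscosity subsolution, fix $(x,t) \in Q_1$, and let $(a,p,M) \in \mathcal{P}^{2,+}u(x,t)$. Introduce the quadratic
\begin{equation*}
Q(y,s) \coloneqq u(x,t) + a(s-t) + \langle p, y-x\rangle + \tfrac{1}{2}\langle M(y-x), y-x\rangle
\end{equation*}
and the modulus
\begin{equation*}
\omega(r) \coloneqq \sup\bigl\{[u(y,s)-Q(y,s)]_+ : |y-x|\leq r,\ |s-t|\leq r^2\bigr\},
\end{equation*}
which by the very definition of $\mathcal{P}^{2,+}u(x,t)$ satisfies $\omega(r)/r^2 \to 0$ as $r \to 0^+$. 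The technical heart of the argument is to construct a corrector $\psi$, defined on a neighborhood of $(x,t)$, that is $C^1$ in $t$ and $C^2$ in $x$, pointwise dominates $u - Q$, and satisfies $\psi(x,t) = \psi_t(x,t) = 0$, $D\psi(x,t) = 0$, and $D^2\psi(x,t) = 0$; a standard construction is to smooth $\omega$ into a nondecreasing majorant and then integrate it with parabolic weighting to obtain the needed flatness at the base point. With such $\psi$ in hand, the function $\varphi_\varepsilon \coloneqq Q + \psi + \varepsilon\bigl(|y-x|^4 + (s-t)^2\bigr)$ touches $u$ strictly from above at $(x,t)$ for each $\varepsilon > 0$, is a valid test function, and has $(\varphi_\varepsilon)_t(x,t)=a$, $D\varphi_\varepsilon(x,t)=p$, $D^2\varphi_\varepsilon(x,t)=M$. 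Applying the subsolution property of $u$ to $\varphi_\varepsilon$ and sending $\varepsilon \to 0^+$ yields $a \leq |p|^\gamma F(M)$.

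The main obstacle lies in the corrector construction: the function $\psi$ must absorb the entire deviation $u - Q$ while remaining flat at $(x,t)$ to second order in $x$ and first order in $t$, all with the correct parabolic scaling between space and time. This is the parabolic analogue of the classical jet-realization lemma of Crandall--Ishii--Lions type, and once it is established the remainder of the proof is bookkeeping.
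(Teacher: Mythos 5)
The paper offers no proof of this lemma at all: it is quoted as the standard equivalence between the test-function and semijet formulations of viscosity subsolutions, implicitly deferred to the Crandall--Ishii--Lions framework of \cite{CIL92}. Your two-direction argument is exactly that standard proof, and its structure is sound. The ($\Leftarrow$) direction is complete: the parabolic Taylor expansion of a test function (valid because $\varphi_t$, $D\varphi$, $D^2\varphi$ are continuous) shows $(\varphi_t(x,t), D\varphi(x,t), D^2\varphi(x,t)) \in \mathcal{P}^{2,+}u(x,t)$, and the semijet inequality is the subsolution inequality of \Cref{vis1}.

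The only soft spot is the ($\Rightarrow$) direction, where the corrector $\psi$ --- which is the entire content of that implication --- is described but not constructed. For completeness you should either cite the standard jet-realization lemma or execute the construction; it does go through along the lines you indicate. Concretely: your $\omega$ is already nondecreasing (the supremum is over an increasing family of cylinders), so no monotone majorization is needed; one builds $\chi \in C^2((0,\infty)) \cap C([0,\infty))$ with $\chi(r) \geq \omega(r)$, $\chi(r) = o(r^2)$, $\chi'(r) = o(r)$, $\chi''(r) = o(1)$ as $r \to 0^+$ by averaging $\omega$ twice over $[r,2r]$, and then sets $\psi(y,s) \coloneqq \chi(|y-x|) + \chi(|s-t|^{1/2})$. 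Since $\omega(\max(A,B)) \leq \omega(A)+\omega(B)$ for nondecreasing $\omega$, this $\psi$ dominates $(u-Q)_+$ on a neighborhood, is $C^2$ in $y$ and $C^1$ in $s$ (the decay rates of $\chi$, $\chi'$, $\chi''$ give the required flatness across $y=x$ and $s=t$), and has vanishing first and second spatial derivatives and vanishing time derivative at $(x,t)$. Two minor remarks: the $\varepsilon\bigl(|y-x|^4+(s-t)^2\bigr)$ perturbation is harmless but unnecessary, since the derivatives of $\varphi_\varepsilon$ at $(x,t)$ do not depend on $\varepsilon$ and $Q+\psi$ already touches $u$ from above there; and no limit $\varepsilon \to 0^+$ is needed --- applying \Cref{vis1} to $Q+\psi$ gives $a \leq |p|^{\gamma}F(M)$ directly.
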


We now consider a general fully nonlinear parabolic equation
\begin{equation}\label{eq:general}
		u_t=G(x, t, u, Du, D^2u),
\end{equation}
for an operator $G=G(x ,t , r, p, M) : B_1 \times [-1, 0) \times \mathbb{R} \times \mathbb{R}^n \times \mathcal{S}^n \to \mathbb{R}$.  We say $G$ is \textit{proper} if
\begin{align*}
	G(x, t, r, p, M) \leq G(x, t, s, p, N) \quad \text{whenever $s \leq r$ and $M \leq N$.}
\end{align*}
Moreover, we say $G$ satisfies the assumption 
\begin{enumerate}  [label=\text{(H)}]
\item \label{H} If there exists a function $\omega: [0, \infty] \to [0, \infty]$ with $\omega(0+)=0$ such that
\begin{align*} 
	G(x,t,r, \alpha(x-y), M) -G(y,t, r, \alpha(x-y), N) \leq \omega(\alpha|x-y|^2+|x-y|),
\end{align*}
whenever $x, y \in B_1$, $t \in [-1,0)$, $r \in \mathbb{R}$, $M, N \in \mathcal{S}^n$, and the following matrix inequality holds:
\begin{align*}
	-3\delta
	\begin{pmatrix}
		I & 0 \\
		0 & I
	\end{pmatrix}
\leq
	\begin{pmatrix}
		M & 0 \\
		0 & -N
	\end{pmatrix}
	\leq 3\delta
	\begin{pmatrix}
		I & -I \\
		-I & I
	\end{pmatrix}
.
\end{align*}
\end{enumerate}

\begin{theorem}[Comparison principle I, {\cite[Theorem 8.3]{CIL92}}]\label{thm:comp1}
		Let $G$ be continuous, proper, and satisfy the assumption \ref{H} for each fixed $t \in [-1, 0)$, with the same function $\omega$. Suppose that $u$ is a subsolution of \eqref{eq:general} and $v$ is a supersolution of \eqref{eq:general}. If $\limsup_{(y, s) \to (x, t)}u(y,s) \leq \liminf_{(y, s) \to (x, t)}v(y,s)$ for any  $(x, t) \in \partial_p Q_1$,  then $u \leq v$ in $Q_1$.
\end{theorem}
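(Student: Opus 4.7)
The plan is to argue by contradiction via the standard doubling-of-variables technique adapted to the parabolic setting. Assume for contradiction that $M \coloneqq \sup_{Q_1}(u-v) > 0$. By the boundary hypothesis and upper/lower semicontinuity, any sequence $(x_n,t_n)$ approaching a parabolic boundary point gives $\limsup(u-v)(x_n,t_n) \leq 0$, so $M$ is attained in the interior (up to the final time $t=0$). To rule out maxima approaching $t=0$ and to obtain strict subsolutions/supersolutions, I would first pass from $u$ to $\tilde u(x,t) \coloneqq u(x,t) - \frac{\varepsilon}{1+t}$, which is a strict subsolution in the sense that $a + \frac{\varepsilon}{(1+t)^2} \leq G(x,t,\tilde u, p, M)$ for $(a,p,M)\in\mathcal{P}^{2,+}\tilde u$, and whose supremum over $u-v$ still exceeds some $\delta>0$ for $\varepsilon$ small. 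This perturbation will provide the strict gap needed to close the argument.

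Next I would introduce, for large $\alpha>0$, the doubled functional
\begin{equation*}
\Phi_\alpha(x,y,t,s) \coloneqq \tilde u(x,t) - v(y,s) - \frac{\alpha}{2}|x-y|^2 - \frac{\alpha}{2}(t-s)^2
\end{equation*}
on $\overline{Q_1}\times\overline{Q_1}$ and let $(\hat x,\hat y,\hat t,\hat s)$ be a maximizer. The usual penalization lemma (e.g. Lemma 3.1 of [CIL92]) gives that, up to a subsequence, $(\hat x,\hat y)\to(\bar x,\bar x)$ and $(\hat t,\hat s)\to(\bar t,\bar t)$ with $(\bar x,\bar t)$ an interior maximum point of $\tilde u-v$, and the penalty quantities $\alpha|\hat x-\hat y|^2$ and $\alpha(\hat t-\hat s)^2$ tend to zero. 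For $\alpha$ large enough, $(\hat x,\hat y)\in B_1\times B_1$ and $\hat t,\hat s\in(-1,0)$, so the extremum is genuinely interior.

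Now I would invoke the parabolic version of Ishii's lemma (\cite[Theorem 8.3]{CIL92}), which produces $a,b\in\mathbb{R}$ with $a-b = \alpha(\hat t - \hat s) - \alpha(\hat t - \hat s) = 0$ — more precisely $a-b$ equals the $s$-derivative relation forced by the quadratic time penalty — and matrices $M,N\in\mathcal S^n$ such that $(a,\alpha(\hat x-\hat y),M)\in\overline{\mathcal P}^{2,+}\tilde u(\hat x,\hat t)$, $(b,\alpha(\hat x-\hat y),N)\in\overline{\mathcal P}^{2,-}v(\hat y,\hat s)$, and
\begin{equation*}
-3\alpha\begin{pmatrix}I & 0\\ 0 & I\end{pmatrix} \leq \begin{pmatrix}M & 0\\ 0 & -N\end{pmatrix} \leq 3\alpha\begin{pmatrix}I & -I\\ -I & I\end{pmatrix}.
\end{equation*}
Applying the strict subsolution and supersolution inequalities at these jets (using the characterization via \Cref{lem:semijet} extended to general $G$), subtracting, and exploiting properness (in the $r$-variable, since $\tilde u(\hat x,\hat t) > v(\hat y,\hat s)$) together with assumption \ref{H} with the common modulus $\omega$, yields
\begin{equation*}
\tfrac{\varepsilon}{(1+\hat t)^2} \leq \omega\bigl(\alpha|\hat x-\hat y|^2 + |\hat x-\hat y|\bigr),
\end{equation*}
whose right-hand side tends to $0$ as $\alpha\to\infty$ while the left-hand side is bounded below by a positive constant. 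This contradiction proves $u\leq v$.

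The main obstacle will be the careful bookkeeping at the parabolic boundary and at the final time $t=0$: one must ensure the maximum of $\Phi_\alpha$ is attained at interior space points and at $t,s<0$, which is where the perturbation $\varepsilon/(1+t)$ (rather than the elliptic trick $\varepsilon|x|^2$) plays its role. A secondary technical point is extracting the limiting jets with the correct identification of the $(a,b)$ times; since $G$ does not depend on the time derivative, only the spatial jet and the fact that $a$ and $b$ arise from a common time penalty are needed to cancel the time terms cleanly.
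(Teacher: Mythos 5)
Your overall strategy --- doubling of variables, the theorem of sums, properness in $r$, and hypothesis \ref{H} --- is the standard one, and indeed the paper offers no proof of its own here: it cites \cite[Theorem 8.3]{CIL92} directly. There are, however, two concrete gaps in your sketch. First, your time perturbation points the wrong way. In the paper's convention $Q_1 = B_1 \times (-1,0]$, the terminal time is $t=0$ and it is \emph{not} part of $\partial_p Q_1$; the auxiliary term must therefore exclude maxima at $t=0$ and produce a strict subsolution. The function $\tilde u = u - \varepsilon/(1+t)$ blows down to $-\infty$ as $t \to -1^{+}$, i.e.\ at the initial time, which is already controlled by the boundary hypothesis, and stays bounded near $t=0$. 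Worse, $\frac{d}{dt}\bigl(-\varepsilon/(1+t)\bigr) = +\varepsilon/(1+t)^2$, so the superjet computation gives $a \leq G(x,t,\tilde u,p,M) + \varepsilon/(1+t)^2$ --- the \emph{reverse} of the strict inequality you assert --- and the final contradiction disappears. The correct choice is $\tilde u = u - \varepsilon/(-t)$ (that is, $-\varepsilon/(T-t)$ with $T=0$), which blows up at the terminal time and yields $a \leq G - \varepsilon/t^2$, with $\varepsilon/\hat t^2 \geq \varepsilon$ furnishing the positive lower bound you need.

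Second, doubling the time variable with the penalty $\frac{\alpha}{2}(t-s)^2$ is inconsistent with the hypothesis as stated: \ref{H} is assumed only ``for each fixed $t$,'' so it controls $G(x,t,r,\alpha(x-y),M) - G(y,t,r,\alpha(x-y),N)$ at a \emph{common} time and says nothing about $G(\hat x,\hat t,\cdot) - G(\hat y,\hat s,\cdot)$ with $\hat t \neq \hat s$. Since $\alpha(\hat x-\hat y)$ and the matrices $M,N$ are unbounded as $\alpha \to \infty$, you cannot absorb the time discrepancy by mere continuity of $G$ on compact sets. The argument in \cite{CIL92} keeps a single time variable, maximizes $\tilde u(x,t) - v(y,t) - \frac{\alpha}{2}|x-y|^2$, and applies the parabolic theorem of sums, which produces $a,b$ with $a-b=0$ because the penalty is $t$-independent; this is precisely the situation \ref{H} is tailored for, and it also removes the muddled bookkeeping in your identification of $a-b$. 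With these two corrections the proof closes.
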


We remark that two operators $G_1(x,t, r, p, M)=G_1(p, M) \coloneqq (1+|p|^2)^{\gamma/2}F(M)$ and $G_2(x, t, r, p, M)= G_2(x,t, M) \coloneqq (1+\theta(x,t)^2)^{\gamma/2}F(M)$ (with H\"older continuous $\theta$) satisfy all assumptions for \Cref{thm:comp1}. However, the operator $G_3(x,t, r, p, M)=G_3(p, M) \coloneqq |p|^{\gamma}F(M)$ (with $\gamma<0$) is not continuous at $p=0$, and so, for \eqref{eq:model}, we require alternative version of comparison principles.

	\begin{theorem}[Comparison principle II, {\cite[Theorem 1]{Dem11}}]\label{thm:comp2}
		Suppose that $u$ is a subsolution of \eqref{eq:model} and $v$ is a supersolution of \eqref{eq:model}. If $\limsup_{(y, s) \to (x, t)}u(y,s) \leq \liminf_{(y, s) \to (x, t)}v(y,s)$ for any $(x, t) \in \partial_p Q_1$, then $u \leq v$ in $Q_1$.
	\end{theorem}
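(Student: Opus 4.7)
The plan is to replay a Crandall--Ishii--Lions doubling-of-variables argument tailored to the operator $G(p, M) = |p|^{\gamma}F(M)$. \Cref{thm:comp1} cannot be invoked as a black box because $G$ violates \ref{H}: it is singular at $p = 0$ when $\gamma < 0$ and merely degenerate when $\gamma > 0$. Arguing by contradiction, suppose $\theta \coloneqq \sup_{Q_{1}}(u - v) > 0$. Replacing $u$ by $\tilde u \coloneqq u - \eta t$ with $0 < \eta \ll \theta$ produces a strict subsolution with margin $\eta$ in each viscosity inequality, so it suffices to show $\tilde u \leq v$ and then let $\eta \to 0^{+}$. Form
\begin{equation*}
\Phi_{\alpha}(x, t, y, s) = \tilde u(x, t) - v(y, s) - \tfrac{\alpha}{q}|x - y|^{q} - \tfrac{\alpha}{2}(t - s)^{2},
\end{equation*}
with an exponent $q > 2$ to be chosen in terms of $\gamma$. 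Routine penalization bookkeeping (using $\tilde u \leq v$ on $\partial_{p}Q_{1}$ from the hypothesis) shows that any maximizer $(\bar x_{\alpha}, \bar t_{\alpha}, \bar y_{\alpha}, \bar s_{\alpha})$ remains in a fixed compact subset of $Q_{1}$ for all large $\alpha$, with $\alpha|\bar x_{\alpha} - \bar y_{\alpha}|^{q} \to 0$ and $\alpha(\bar t_{\alpha} - \bar s_{\alpha})^{2} \to 0$.

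Next I would invoke the parabolic Crandall--Ishii--Lions lemma to extract limiting semijets $(a, p_{\alpha}, X_{\alpha}) \in \overline{\mathcal{P}}^{2,+}\tilde u(\bar x_{\alpha}, \bar t_{\alpha})$ and $(a, p_{\alpha}, Y_{\alpha}) \in \overline{\mathcal{P}}^{2,-}v(\bar y_{\alpha}, \bar s_{\alpha})$ sharing the common gradient $p_{\alpha} = \alpha |\bar x_{\alpha} - \bar y_{\alpha}|^{q-2}(\bar x_{\alpha} - \bar y_{\alpha})$ and time derivative $a = \alpha(\bar t_{\alpha} - \bar s_{\alpha})$, with the matrix inequality $X_{\alpha} \leq Y_{\alpha}$. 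When $\bar x_{\alpha} \neq \bar y_{\alpha}$, so $p_{\alpha} \neq 0$, the strengthened viscosity inequalities combined with \ref{F1} give
\begin{equation*}
\eta \leq |p_{\alpha}|^{\gamma}\bigl(F(X_{\alpha}) - F(Y_{\alpha})\bigr) \leq |p_{\alpha}|^{\gamma}\, \mathcal{M}^{+}(X_{\alpha} - Y_{\alpha}) \leq 0,
\end{equation*}
a contradiction that forces $\tilde u \leq v$ on this branch.

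The main obstacle is the degenerate alternative $\bar x_{\alpha} = \bar y_{\alpha}$ along a subsequence, where $p_{\alpha} = 0$ and $|p_{\alpha}|^{\gamma}F(\cdot)$ is either ill-defined (for $\gamma < 0$) or a mere $0 \cdot F$ (for $\gamma > 0$); this is exactly the regime excluded by \ref{H}. Following Demengel \cite{Dem11}, one interprets the viscosity definition at gradient-vanishing touching points through the upper/lower semicontinuous envelopes $G^{*}, G_{*}$ of $G$. Since the Hessian of the penalization $\tfrac{\alpha}{q}|x - y|^{q}$ vanishes at $x = y$ whenever $q > 2$, the Crandall--Ishii matrices $X_{\alpha}, Y_{\alpha}$ become negative semidefinite on that subsequence; the envelope-based inequalities, together with $F(0) = 0$ from \ref{F1} and $F(X_{\alpha}) - F(Y_{\alpha}) \leq \mathcal{M}^{+}(X_{\alpha} - Y_{\alpha}) \leq 0$, again yield $\eta \leq 0$. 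Choosing $q$ large enough in terms of $\gamma$ so that this degenerate branch can be ruled out or controlled quantitatively is the technical heart of the argument and the essential new content of \cite{Dem11}; everything else is a faithful adaptation of the classical parabolic comparison machinery.
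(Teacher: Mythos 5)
First, a framing point: the paper does not prove this statement at all --- it is imported verbatim from \cite[Theorem 1]{Dem11} --- so there is no internal proof to compare against. Your sketch does follow the strategy of that reference (doubling of variables with a penalization $\tfrac{\alpha}{q}|x-y|^q$, $q>2$, strictification via $u-\eta t$, and a case split according to whether the doubled maximum sits on the diagonal), and the non-degenerate branch $\bar x_\alpha\neq\bar y_\alpha$ is handled correctly.

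The gap is exactly where you say the ``technical heart'' lies, and deferring it back to \cite{Dem11} leaves the proof circular. Concretely: (i) the envelope mechanism you propose fails in the singular range $-1<\gamma<0$. By \ref{F1} one can find $N$ arbitrarily close to any $M$ with $F(N)>0$, so $G^*(0,M)=\limsup_{(p,N)\to(0,M)}|p|^{\gamma}F(N)=+\infty$ for every $M$; the envelope subsolution inequality at a gradient-vanishing touching point is therefore vacuous and produces no contradiction with the margin $\eta$. What Demengel actually uses is a restricted notion of viscosity solution in which a test function with $D\varphi=0$ at the touching point is only admissible when one can also arrange $D^2\varphi=0$ there (which the penalization $|x-y|^q$, $q>2$, provides on the diagonal), and the sub/supersolution conditions then reduce to $\varphi_t\le 0$ and $\varphi_t\ge 0$, yielding $\eta\le 0$ without ever evaluating $|p|^{\gamma}F$. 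A complete proof must supply this definition, verify it is equivalent to the paper's \Cref{vis1} (which formally requires $\varphi_t\le|D\varphi|^{\gamma}F(D^2\varphi)$ even when $D\varphi=0$ and $\gamma<0$, where the right-hand side is undefined), and run the diagonal case through it. (ii) A smaller inaccuracy: on the diagonal the Jensen--Ishii block inequality with $A=D^2\phi=0$ gives $X_\alpha\le 0$ and $Y_\alpha\ge 0$, not that both matrices are negative semidefinite; and in any case the displayed chain $\eta\le|p_\alpha|^{\gamma}\bigl(F(X_\alpha)-F(Y_\alpha)\bigr)\le 0$ is only available off the diagonal, since one cannot multiply by $|p_\alpha|^{\gamma}$ when $p_\alpha=0$. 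As written, your argument proves the comparison principle only modulo the very citation it is meant to replace.
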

%
%
\section{The solvability of fully nonlinear mean curvature-type Dirichlet problems}\label{sec:mean}
%
%
\subsection{Reduction of the proof of \Cref{thm:solv}}
In this subsection, we suggest a reduction scheme for proving \Cref{thm:solv} in view of Schauder fixed point theorem and a priori estimate. We refer to \cite{Don91, LSU68, Lie96} for similar results in the quasilinear parabolic setting. We begin with two key lemmas.

\begin{lemma}[Schauder fixed point theorem]\label{lem:schauder}
	Let $T$ be a continuous and compact mapping of a Banach space $\mathcal{B}$ into itself, and suppose there exists a constant $L>0$ such that
	\begin{equation*}
		\|u\|_{\mathcal{B}} \leq L
	\end{equation*}
for all $u \in \mathcal{B}$ and $\sigma \in [0,1]$ satisfying $u=\sigma Tu$. Then $T$ has a fixed point.
\end{lemma}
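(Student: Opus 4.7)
The plan is to reduce this statement---often called the Leray--Schauder principle---to the classical Schauder fixed point theorem, which asserts that every continuous compact self-map of a nonempty, closed, bounded, convex subset of a Banach space has a fixed point. The a priori bound $\|u\|_{\mathcal{B}} \leq L$ is the crucial input: its role will be to prevent ``boundary escape'' when we confine the map to a large ball.

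First I would fix a radius $R > L$ and consider the closed ball $\overline{B_R} := \{u \in \mathcal{B} : \|u\|_{\mathcal{B}} \leq R\}$, which is closed, bounded, and convex. Since $T$ need not map $\overline{B_R}$ into itself, I would introduce the radial retraction $r : \mathcal{B} \to \overline{B_R}$ defined by $r(u) = u$ when $\|u\|_{\mathcal{B}} \leq R$ and $r(u) = R u / \|u\|_{\mathcal{B}}$ otherwise. This $r$ is continuous and norm-nonincreasing, so the composition $S := r \circ T : \overline{B_R} \to \overline{B_R}$ is continuous and compact (a compact map postcomposed with a continuous one remains compact). The classical Schauder theorem then produces some $u^{\ast} \in \overline{B_R}$ with $S u^{\ast} = u^{\ast}$.

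The remaining step is to upgrade $u^{\ast}$ to a genuine fixed point of $T$. If $\|T u^{\ast}\|_{\mathcal{B}} \leq R$, then $r$ is the identity at $T u^{\ast}$ and $u^{\ast} = S u^{\ast} = T u^{\ast}$. Otherwise $\|T u^{\ast}\|_{\mathcal{B}} > R$, in which case the definition of $r$ gives $u^{\ast} = \sigma T u^{\ast}$ with $\sigma := R / \|T u^{\ast}\|_{\mathcal{B}} \in (0,1)$, and in particular $\|u^{\ast}\|_{\mathcal{B}} = R$. The hypothesis then forces $\|u^{\ast}\|_{\mathcal{B}} \leq L < R$, contradicting $\|u^{\ast}\|_{\mathcal{B}} = R$. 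Hence this second case is impossible and $u^{\ast}$ is a fixed point of $T$.

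The only conceptual obstacle is recognizing how the a priori bound enters: it is used exactly once, to rule out that the fixed point produced on the retracted ball lies on the artificial sphere $\|u\|_{\mathcal{B}} = R$, because such a point would correspond to a solution of $u = \sigma T u$ with $\sigma \in [0,1]$ and norm strictly greater than $L$. Once the retraction is set up, everything else is purely topological and depends on nothing beyond continuity, compactness, and the classical Schauder theorem.
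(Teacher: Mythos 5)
Your proof is correct. The paper does not prove this lemma itself but simply cites \cite[Theorem 11.3]{GT01}, and your radial-retraction argument reducing the Leray--Schauder principle to the classical Schauder fixed point theorem is precisely the standard proof given in that reference, with the a priori bound used correctly to exclude a fixed point on the artificial sphere $\|u\|_{\mathcal{B}}=R$.
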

\begin{proof}
	See \cite[Theorem 11.3]{GT01}.
\end{proof}

\begin{lemma}\label{lem:fixed}
	Suppose that  $\gamma\in\mathbb{R}$, and $F$ satisfies \ref{F1} and \ref{F2}. Let $v \in C^{1, \alpha}(\overline{Q_1})$ and $\varphi \in C^{2, \beta}(\overline{Q_1})$ for some $\alpha, \beta \in (0,1)$, and $\varphi$ satisfy the compatibility condition:
	\begin{equation*}
		\varphi_t=(1+|D\varphi|^2)^{\gamma/2}F(D^2 \varphi) \quad \text{on $\partial_c Q_1$}.
	\end{equation*}
	Then the  Dirichlet problem
	\begin{equation}\label{eq:fixed}
		\left\{ \begin{aligned}
			u_t&= (1+|Dv|^2 )^{\gamma/2}F(D^2u) &&\text{in $Q_1$} \\
			u&=\varphi &&\text{on $\partial_p Q_1$}
		\end{aligned}\right.
	\end{equation}
admits a unique classical solution $u$ which belongs to $C^{2, \overline{\alpha}}(\overline{Q_1})$ for some $\overline{\alpha} \in (0,1)$. Moreover, we have a uniform estimate
\begin{equation*}
	\|u\|_{C^{2, \overline{\alpha}}(\overline{Q_1})} \leq C,
\end{equation*}
where $C>0$ is a constant depending only on $n$, $\lambda$, $\Lambda$, $\gamma$, $\|\varphi\|_{C^{2, \beta}(\overline{Q_1})}$, and  $\|v\|_{C^{1, \alpha}(\overline{Q_1})}$.
\end{lemma}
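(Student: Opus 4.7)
The plan is to recognize \eqref{eq:fixed} as a standard fully nonlinear parabolic Dirichlet problem with a frozen, H\"older continuous coefficient, and then invoke the classical well-posedness and regularity theory for such problems. First I would set $a(x,t) \coloneqq (1+|Dv(x,t)|^2)^{\gamma/2}$ and $G(x,t,M) \coloneqq a(x,t) F(M)$, so that \eqref{eq:fixed} becomes $u_t = G(x,t,D^2u)$ in $Q_1$ with $u=\varphi$ on $\partial_p Q_1$. Since $v \in C^{1,\alpha}(\overline{Q_1})$, the gradient $Dv$ is $\alpha$-H\"older in $x$, and combining this with the $\frac{1+\alpha}{2}$-H\"older regularity of $v$ in $t$ via a standard parabolic interpolation yields that $Dv$ (hence $a$) is parabolically $\alpha$-H\"older in $(x,t)$. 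Moreover $0 < \lambda_0 \leq a \leq \Lambda_0 < \infty$ for constants depending only on $\gamma$ and $\|v\|_{C^{1,\alpha}(\overline{Q_1})}$. Consequently $G$ is uniformly elliptic (with constants $\lambda \lambda_0$, $\Lambda \Lambda_0$), convex and $C^{1,1}$ in $M$, parabolically H\"older in $(x,t)$, and satisfies $G(\cdot,\cdot,0)=0$.

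Uniqueness in the viscosity sense, and hence among classical solutions, follows immediately from \Cref{thm:comp1}: $G$ is continuous, proper, and satisfies \ref{H} thanks to the uniform ellipticity and H\"older regularity of $a$. For existence and regularity I would invoke the classical theory of convex fully nonlinear parabolic Dirichlet problems with H\"older coefficients developed in \cite{CKLS99, CKS00, dST17, Kry18}. In outline: (i) an $L^{\infty}$ bound for any classical solution is obtained by comparing $u$ with barriers of the form $\varphi \pm K(t+1)$ via \Cref{thm:comp1}, with $K$ depending on $\|\varphi\|_{C^{2}(\overline{Q_1})}$ and $\Lambda_0$; (ii) Krylov--Safonov estimates give interior H\"older regularity of $u$ and $Du$, which is upgraded to interior $C^{2,\bar{\alpha}}$ by the Evans--Krylov theorem applied to the convex operator $G$; (iii) Krylov's global $C^{2,\bar{\alpha}}$ boundary estimate for convex fully nonlinear parabolic operators, using the compatibility of $\varphi$ on the corner $\partial_c Q_1$, provides regularity up to $\partial_p Q_1$. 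Existence then follows by the method of continuity from the heat equation to $G$, with the a priori $C^{2,\bar{\alpha}}$ bound serving as the required estimate.

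The quantitative bound $\|u\|_{C^{2,\bar{\alpha}}(\overline{Q_1})} \leq C$ is assembled from the above ingredients, and its constants track polynomially through $\lambda_0$, $\Lambda_0$, and $\|\varphi\|_{C^{2,\beta}(\overline{Q_1})}$, giving the claimed dependence. The principal obstacle is the global $C^{2,\bar{\alpha}}$ boundary regularity at the corner $\partial_c Q_1$, which is delicate because it requires the compatibility condition and a careful use of Krylov's boundary theorem. A related technical subtlety is that the assumed compatibility condition on $\varphi$ is expressed in terms of $|D\varphi|$ rather than $|Dv|$; this mismatch is reconciled in the Schauder iteration of \Cref{thm:solv} by restricting $v$ to the affine subset of $C^{1,\alpha}$-functions agreeing with $\varphi$ on $\partial_p Q_1$, so that $Dv = D\varphi$ on $\partial_c Q_1$ and the two versions of the compatibility condition coincide.
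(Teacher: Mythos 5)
Your proposal is correct and takes essentially the same route as the paper: freeze the coefficient $a(x,t)=(1+|Dv|^2)^{\gamma/2}$, observe that the resulting operator is uniformly parabolic, convex, and parabolically H\"older in $(x,t)$ with $G(\cdot,\cdot,0)=0$, and invoke the classical well-posedness and global $C^{2,\overline{\alpha}}$ theory (the paper cites Perron's method via \cite{CKLS99, CKS00} for existence and Wang's global estimates \cite{Wan92b} for regularity, whereas you sketch the method of continuity with Krylov's boundary theory --- interchangeable packagings of the same machinery), with uniqueness from \Cref{thm:comp1} in both cases. Your closing observation that the stated compatibility condition involves $|D\varphi|$ rather than $|Dv|$ is a genuine subtlety that the paper's proof passes over silently, and your proposed reconciliation (restricting to $v$ agreeing with $\varphi$ on $\partial_p Q_1$, so that $Dv=D\varphi$ on $\partial_c Q_1$) is a reasonable way to make the corner regularity argument airtight.
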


\begin{proof}
	Let $G(x,t, M) \coloneqq (1+|Dv(x,t)|^2)^{\gamma/2}F(M)$. Since $v \in C^{1, \alpha}(\overline{Q_1})$, the operator $G$ satisfies the assumption given in \cite[Theorem 4.5]{CKLS99} or \cite[Theorem 8.4]{CKS00}. In other words, the standard Perron's method guarantees the existence of a viscosity solution $u \in C(\overline{Q_1})$ of \eqref{eq:fixed}. Furthermore, since $\varphi$ satisfies the compatibility condition at corner points, \cite[Theorem 3.1 and Theorem 3.2]{Wan92b} yields that $u \in C^{2, \overline{\alpha}}(\overline{Q_1})$ with a uniform estimate:
	\begin{equation*}
		\|u\|_{C^{2, \overline{\alpha}}(\overline{Q_1})} \leq  C,
	\end{equation*}
where $C>0$ is a constant depending only on $n$, $\lambda$, $\Lambda$, $\gamma$, $\|\varphi\|_{C^{2, \beta}(\overline{Q_1})}$, and  $\|v\|_{C^{1, \alpha}(\overline{Q_1})}$.

Finally, the uniqueness follows from the comparison principle, \Cref{thm:comp1}.
\end{proof}

We are now ready to provide the reduction of the proof of \Cref{thm:solv}.
\begin{lemma}\label{lem:reduction}
	Suppose that  $\gamma\in\mathbb{R}$, and $F$ satisfies \ref{F1} and \ref{F2}. Let $\varphi \in C^{2, \beta}(\overline{Q_1})$ for some $\beta \in (1/2, 1)$, and $\varphi$ satisfy the compatibility condition:
	\begin{align*}
		\varphi_t=(1+|D\varphi|^2)^{\gamma/2}F(D^2 \varphi) \quad \text{on $\partial_c Q_1$}.
	\end{align*}
	  Moreover, suppose that there exist constants $\alpha \in (0,1)$ and $L>0$ (which are independent of $u$ and $\sigma$) such that every $C^{2, \beta}(\overline{Q_1})$ solution $u$ of the $\sigma$-Dirichlet problems, 
	\begin{equation}\label{eq:sigma}
		\left\{ \begin{aligned}
			u_t&= (1+|Du|^2 )^{\gamma/2}F_{\sigma}(D^2u) &&\text{in $Q_1$} \\
			u&=\sigma\varphi &&\text{on $\partial_p Q_1$} 
		\end{aligned}\right.
	\end{equation}
	for $F_{\sigma}(M)\coloneqq\sigma F(\sigma^{-1}M)$ and $0 \leq \sigma \leq 1$, satisfies 
	\begin{align*}
		\|u\|_{C^{1, \alpha}(\overline{Q_1})} \leq L.
	\end{align*}
	Then the Dirichlet problem \eqref{eq:mean} is uniquely solvable in $C^{2, \beta}(\overline{Q_1})$.
\end{lemma}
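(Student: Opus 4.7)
The plan is to run the Leray--Schauder continuity method, i.e., apply \Cref{lem:schauder} to a solution operator built from \Cref{lem:fixed}. Take the Banach space $\mathcal{B}=C^{1,\alpha}(\overline{Q_1})$ equipped with the parabolic H\"older norm (with $\alpha$ the exponent furnished by the hypothesis), and for $v\in\mathcal{B}$ let $Tv\in C^{2,\overline{\alpha}}(\overline{Q_1})\subset\mathcal{B}$ denote the unique classical solution of the frozen-coefficient Dirichlet problem \eqref{eq:fixed} supplied by \Cref{lem:fixed}. The uniform estimate
\begin{equation*}
\|Tv\|_{C^{2,\overline{\alpha}}(\overline{Q_1})}\leq C\bigl(n,\lambda,\Lambda,\gamma,\|\varphi\|_{C^{2,\beta}(\overline{Q_1})},\|v\|_{C^{1,\alpha}(\overline{Q_1})}\bigr),
\end{equation*}
combined with the Arzel\`a--Ascoli-type compactness of the embedding $C^{2,\overline{\alpha}}(\overline{Q_1})\hookrightarrow C^{1,\alpha}(\overline{Q_1})$ on the bounded cylinder, makes $T$ a compact operator. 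Continuity of $T$ I plan to verify as follows: if $v_k\to v$ in $\mathcal{B}$, then $\{Tv_k\}$ is uniformly bounded in $C^{2,\overline{\alpha}}$, so every subsequence admits a further subsequence converging in $\mathcal{B}$ to some $u_\infty$; the stability of viscosity solutions under the uniform convergence of the coefficient $(1+|Dv_k|^2)^{\gamma/2}\to(1+|Dv|^2)^{\gamma/2}$, together with the uniqueness statement in \Cref{lem:fixed}, forces $u_\infty=Tv$, so the entire sequence converges and $T$ is continuous.

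Next I identify fixed points of $\sigma T$ with solutions of the $\sigma$-Dirichlet problem \eqref{eq:sigma}. For $\sigma\in(0,1]$, if $u=\sigma Tu$, set $w:=Tu=u/\sigma$; by definition of $T$,
\begin{equation*}
w_t=(1+|Du|^2)^{\gamma/2}F(D^2w)\ \text{in}\ Q_1,\qquad w=\varphi\ \text{on}\ \partial_pQ_1.
\end{equation*}
Substituting $w=u/\sigma$ and multiplying through by $\sigma$ yields
\begin{equation*}
u_t=(1+|Du|^2)^{\gamma/2}\sigma F(\sigma^{-1}D^2u)=(1+|Du|^2)^{\gamma/2}F_\sigma(D^2u),\qquad u=\sigma\varphi\ \text{on}\ \partial_pQ_1,
\end{equation*}
while the case $\sigma=0$ gives $u\equiv 0$. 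Since $w\in C^{2,\overline{\alpha}}(\overline{Q_1})$, also $u\in C^{2,\overline{\alpha}}(\overline{Q_1})$, and a standard parabolic Schauder bootstrap using $\varphi\in C^{2,\beta}$ with $\beta\in(1/2,1)$ upgrades $u$ to $C^{2,\beta}(\overline{Q_1})$, so $u$ is a $C^{2,\beta}$ solution of \eqref{eq:sigma}. The hypothesis of the lemma then yields the uniform bound $\|u\|_{C^{1,\alpha}(\overline{Q_1})}\leq L$, and \Cref{lem:schauder} produces a fixed point $u=Tu$ of $T$, which solves \eqref{eq:mean} and belongs to $C^{2,\beta}(\overline{Q_1})$. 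Uniqueness in this class is immediate from \Cref{thm:comp1} applied to $G(p,M):=(1+|p|^2)^{\gamma/2}F(M)$, which is continuous, proper, and satisfies \ref{H} under \ref{F1}--\ref{F3}.

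The principal technical obstacle will be the continuity of $T$ in the $C^{1,\alpha}$-topology, particularly in the singular regime $\gamma<0$, where the coefficient $(1+|Dv|^2)^{\gamma/2}$ behaves like a weight that could become large when $|Dv|$ approaches zero or infinity. The saving grace is the regularization $1+|\cdot|^2$, which keeps the coefficient uniformly positive and H\"older continuous on bounded subsets of $\mathcal{B}$, so the viscosity stability theorem applies without pathology. A secondary delicate point is that the frozen-coefficient compatibility on $\partial_cQ_1$ required by \Cref{lem:fixed} implicitly involves $Dv$ on $\partial_cQ_1$; I would handle this by restricting the iteration to the closed affine set $\{v\in\mathcal{B}:v=\varphi\ \text{on}\ \partial_pQ_1\}$, on which $Dv|_{\partial_cQ_1}=D\varphi|_{\partial_cQ_1}$ automatically, and applying \Cref{lem:schauder} in its Banach-space form after translating by $\varphi$. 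The remaining ingredients---the parabolic Schauder bootstrap and the verification of \ref{H} for $G(p,M)=(1+|p|^2)^{\gamma/2}F(M)$---are by now routine.
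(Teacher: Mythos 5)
Your proposal is correct and follows essentially the same route as the paper: the solution operator $T$ built from \Cref{lem:fixed} on $\mathcal{B}=C^{1,\alpha}(\overline{Q_1})$, compactness via the uniform $C^{2,\overline{\alpha}}$ estimate, continuity via subsequential compactness plus uniqueness/comparison, identification of $u=\sigma Tu$ with the $\sigma$-Dirichlet problem, and uniqueness from \Cref{thm:comp1}. You are in fact somewhat more explicit than the paper on two points it glosses over --- the algebraic verification that $u=\sigma Tu$ solves \eqref{eq:sigma} and the Schauder bootstrap needed so the fixed-point iterates lie in $C^{2,\beta}$ where the a priori hypothesis applies.
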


\begin{proof}
	The uniqueness follows from the comparison principle, \Cref{thm:comp1}. For the existence, we fix an exponent $\alpha \in (0,1)$ and set a Banach space $\mathcal{B}=C^{1, \alpha}(\overline{Q_1})$.  Then \Cref{lem:fixed} implies that, for all $v \in \mathcal{B}$, the Dirichlet problem \eqref{eq:fixed} admits a unique classical solution $u \in \mathcal{B}$. Thus, we define a operator $T : \mathcal{B} \to \mathcal{B}$ by the relation $u=Tv$. It is easy to check that $u=\sigma Tu$ is corresponding to the $\sigma$-Dirichlet problems, \eqref{eq:sigma}.
	
	To apply \Cref{lem:schauder}, we claim that $T$ is continuous and compact. First of all, by the uniform estimate obtained in \Cref{lem:fixed}, we know that $T$ maps a bounded set in $\mathcal{B}=C^{1, \alpha}(\overline{Q_1})$ into a bounded set in $C^{2, \overline{\alpha}}(\overline{Q_1})$. By Arzela-Ascoli theorem, $C^{2, \overline{\alpha}}(\overline{Q_1})$ is precompact in $C^{2}(\overline{Q_1})$ and so in $\mathcal{B}$, which indicates the compactness of $T$. Moreover, suppose that a sequence of functions $\{v_m\}$ converges to $v$ in $\mathcal{B}$. Since $\{Tv_m\}$ is precompact in $C^2(\overline{Q_1})$, we suppose that a subsequence of $\{Tv_m\}$ converges to a limit function $u_0$ in $C^{2}(\overline{Q_1})$. By the definition of $T$, we note that
		\begin{equation*}
		\left\{ \begin{aligned}
			(Tv_m)_t&= (1+|Dv_m|^2 )^{\gamma/2}F(D^2(Tv_m)) &&\text{in $Q_1$}\\
			Tv_m&=\varphi &&\text{on $\partial_p Q_1$}
		\end{aligned}\right.
	\end{equation*}
and
	\begin{equation*}
	\left\{ \begin{aligned}
		(Tv)_t&= (1+|Dv|^2 )^{\gamma/2}F(D^2(Tv)) &&\text{in $Q_1$}\\
		Tv&=\varphi &&\text{on $\partial_p Q_1$}.
	\end{aligned}\right.
\end{equation*}
Letting $m \to \infty$, we conclude from the comparison principle, \Cref{thm:comp1}, that $\lim_{m \to \infty}Tv_m=u_0=Tv$.
\end{proof}

In view of \Cref{lem:reduction}, it only remains to derive a priori $C^{1, \alpha}$-estimate for $\sigma$-Dirichlet problems. More precisely, the desired a priori estimate will be deduced from the following five steps:
\begin{enumerate}[label=\Roman*.]
	\item An estimate of $\sup_{Q_1}|u|$, \Cref{lem:max_prin};
	\item An estimate of $\sup_{\partial_pQ_1}|Du|$, \Cref{lem:bdry_Du};
	\item An estimate of $\sup_{Q_1}|Du|$, \Cref{globalgradient};
	\item An estimate of $[Du]_{C^{\alpha}(Q')}$, \Cref{interiorholder};
	\item An estimate of $[Du]_{C^{\alpha}(\overline{Q_1})}$, \Cref{lem:gbd_c1a},
\end{enumerate}
where $Q' \subset \joinrel \subset Q_1$.
\subsection{A priori $L^{\infty}$ and gradient estimates}
\begin{lemma}[$L^{\infty}$-estimate] \label{lem:max_prin}
	Suppose that  $\gamma \in \mathbb{R}$ and $F$ satisfies \ref{F1}. Let $u \in C^{2}(\overline{Q_1})$ be a classical solution of \eqref{eq:mean} with $\varphi \in C(\partial_p Q_1)$. Then we have
\begin{align*}
	\sup_{Q_1}|u| \leq \sup_{\partial_pQ_1}|\varphi|.
\end{align*}
\end{lemma}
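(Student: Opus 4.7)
The plan is a standard weak maximum principle argument, with a small perturbation in time to rule out interior extrema; the key observation is that at an interior critical point of $u$ the regularization in the gradient is harmless because $(1+|Du|^2)^{\gamma/2}=1$ when $Du=0$, so the degeneracy/singularity at $p=0$ plays no role.

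I would proceed as follows. Fix $\varepsilon>0$ and set $v\coloneqq u-\varepsilon t$ on $\overline{Q_1}$. Since $v\in C^2(\overline{Q_1})$, it attains its maximum over $\overline{Q_1}$ at some point $(x_0,t_0)$. Suppose toward a contradiction that $(x_0,t_0)\notin \partial_p Q_1$, i.e.\ $x_0\in B_1$ and $t_0\in(-1,0]$. Standard calculus at an interior-in-space, terminal-or-interior-in-time maximum gives
\begin{equation*}
v_t(x_0,t_0)\ge 0,\qquad Dv(x_0,t_0)=0,\qquad D^2v(x_0,t_0)\le 0,
\end{equation*}
hence $u_t(x_0,t_0)\ge\varepsilon$, $Du(x_0,t_0)=0$, and $D^2u(x_0,t_0)\le 0$. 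Evaluating the PDE at $(x_0,t_0)$ and using $F(0)=0$ together with the uniform ellipticity in \ref{F1},
\begin{equation*}
u_t(x_0,t_0)=(1+|Du(x_0,t_0)|^2)^{\gamma/2}F(D^2u(x_0,t_0))=F(D^2u(x_0,t_0))\le \mathcal{M}^+(D^2u(x_0,t_0))\le 0,
\end{equation*}
which contradicts $u_t(x_0,t_0)\ge\varepsilon>0$. Therefore the maximum of $v$ is attained on $\partial_p Q_1$, and since $t\le 0$ on $\overline{Q_1}$ and $-s\le 1$ on $\partial_p Q_1$,
\begin{equation*}
u(x,t)=v(x,t)+\varepsilon t\le \sup_{(y,s)\in\partial_p Q_1}\bigl(\varphi(y,s)-\varepsilon s\bigr)\le \sup_{\partial_p Q_1}\varphi+\varepsilon \qquad \text{for all }(x,t)\in Q_1.
\end{equation*}
Letting $\varepsilon\downarrow 0$ yields $\sup_{Q_1}u\le\sup_{\partial_p Q_1}\varphi$.

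For the lower bound I would apply the symmetric argument to $w\coloneqq u+\varepsilon t$, considering its minimum. At an interior minimum one obtains $u_t\le-\varepsilon$, $Du=0$, and $D^2u\ge 0$, and \ref{F1} together with $F(0)=0$ now gives $F(D^2u)\ge \mathcal{M}^-(D^2u)\ge 0$, contradicting $u_t\le-\varepsilon<0$. As before, letting $\varepsilon\downarrow 0$ produces $\inf_{Q_1}u\ge\inf_{\partial_p Q_1}\varphi$, and combining the two bounds yields the claimed estimate. I do not expect a genuine obstacle here: the perturbation $\pm\varepsilon t$ is only used to replace the soft inequality $u_t\ge 0$ at the maximum by a strict one, and the possible singularity of the gradient coefficient when $\gamma<0$ is irrelevant since the term is evaluated at a point where $Du=0$ makes $(1+|Du|^2)^{\gamma/2}$ equal to $1$.
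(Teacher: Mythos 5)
Your proof is correct, and it takes a genuinely different (and more elementary) route than the paper. The paper observes that $u \in C^{2}(\overline{Q_1})$ forces $|Du|$ to be bounded, so the factor $(1+|Du|^2)^{\gamma/2}$ is pinched between two positive constants; this turns the equation into a uniformly parabolic differential inequality $u_t \geq \mathcal{M}^{-}_{\lambda',\Lambda'}(D^2u)$ with modified ellipticity constants depending on $\|Du\|_{L^{\infty}(Q_1)}$, and the bound then follows from the parabolic Alexandroff--Bakelman--Pucci estimate. You instead run the classical weak maximum principle directly on $u \mp \varepsilon t$: at an interior extremum the gradient vanishes, so the coefficient is exactly $1$ regardless of the sign of $\gamma$, and \ref{F1} with $F(0)=0$ gives the sign of $F(D^2u)$ needed for the contradiction. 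Your argument buys self-containedness (no ABP, no reference to $\|Du\|_{L^{\infty}}$) and makes transparent why the degeneracy/singularity at $p=0$ is irrelevant here; the paper's argument is shorter on the page and reuses machinery (ABP, reduction to Pucci extremal inequalities) that it needs elsewhere anyway. One small point worth stating explicitly if you write this up: at a maximum attained at the terminal time $t_0=0$ (which belongs to $Q_1$, not to $\partial_p Q_1$) you only get the one-sided inequality $v_t(x_0,0)\geq 0$ from the left time-derivative, but that is exactly what your perturbation is designed to exploit, so the proof goes through.
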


\begin{proof}
	Since $u \in C^2(\overline{Q_1})$, we observe that
	\begin{align*}
		u_t \geq \mathcal{M}^-_{\lambda', \Lambda'}(D^2u) \quad \text{in $Q_1$},
	\end{align*}
for the ellipticity constants $0<\lambda' \leq  \Lambda'$ which depend only on $\lambda$, $\Lambda$, $\gamma$, and $\|Du\|_{L^{\infty}(Q_1)}$. Then by applying the Alexandroff-Bakelman-Pucci estimate (see \cite[Theorem 3.14]{Wan92a}), we have
\begin{align*}
	u \geq -\sup_{\partial_p Q_1}|\varphi| \quad \text{in $Q_1$}.
\end{align*}
A similar argument gives the upper bound for $u$.
\end{proof}
\begin{lemma}[Boundary gradient estimates] \label{lem:bdry_Du}
Suppose that  $ \gamma > -2$ and $F$ satisfies \ref{F1}. Let $u \in C^2(Q_1) \cap C^1(\overline{Q_1})$ be a solution of \eqref{eq:mean} with $\varphi \in C^2(\overline{Q_1}) $. Then we have
\begin{equation*}
	\sup_{\partial_p Q_1}|Du| \leq C ,
\end{equation*}
where $C$ is a constant depending only on $n$, $\lambda$, $\Lambda$, $\gamma$, $\|u\|_{L^{\infty}(Q_1)}$, and $\|\varphi\|_{C^2(\overline{Q_1})}$. 
\end{lemma}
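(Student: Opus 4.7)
My plan is to construct upper and lower barriers at each side boundary point and apply the comparison principle to bound the inward normal derivative. Decompose $\partial_p Q_1 = \partial_b Q_1 \cup \partial_c Q_1 \cup \partial_s Q_1$. On the bottom, since $u = \varphi$ on $B_1 \times \{-1\}$, $\varphi \in C^2$, and $u \in C^1(\overline{Q_1})$, one has $Du(x, -1) = D\varphi(x, -1)$ at once, so $|Du| \leq \|\varphi\|_{C^1}$ on $\partial_b Q_1$.

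For $X_0 = (x_0, t_0) \in \partial_s Q_1$ with $|x_0|=1$, the identity $u(\cdot, t) = \varphi(\cdot, t)$ on $\partial B_1$ forces the spatial tangential gradient of $u$ at $X_0$ to coincide with $D_\tau \varphi(X_0)$, so it suffices to bound the inward normal derivative $\partial_\nu u(X_0)$. I would use an upper barrier of the form
\begin{equation*}
w^+(x, t) = \varphi(x, t) + A\bigl(1 - e^{-\mu d(x)}\bigr) + K\bigl(|x - x_0|^2 + (t_0 - t)\bigr),
\end{equation*}
on a parabolic neighborhood $N = (B_\rho(x_0) \cap B_1) \times (t_0 - \rho^2, t_0]$, where $d(x) = 1 - |x|$, and parameters $A, \mu, K > 0$ are chosen in terms of $\|u\|_{L^\infty}$, $\|\varphi\|_{C^2}$, $\lambda$, $\Lambda$, and $\gamma$. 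The quadratic-in-space and linear-in-time correction (with $K$ large) guarantees $w^+ \geq u$ on the inner lateral and bottom parts of $\partial_p N$, whereas on $\partial_s Q_1 \cap \overline{N}$ one has $d = 0$ and hence $w^+ \geq \varphi = u$. At $X_0$ itself all three added terms vanish, so $w^+(X_0) = u(X_0)$.

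The crux is verifying that $w^+$ is a viscosity supersolution of \eqref{eq:mean}. Writing $\hat{x} = x/|x|$,
\begin{equation*}
D^2 w^+ = D^2 \varphi - A\mu^2 e^{-\mu d}\,\hat{x}\otimes\hat{x} - \frac{A\mu e^{-\mu d}}{|x|}(I - \hat{x}\otimes\hat{x}) + 2KI,
\end{equation*}
which for large $\mu$ exhibits a radial eigenvalue of order $-A\mu^2 e^{-\mu d}$. By \ref{F1}, $F(D^2 w^+) \leq -c\lambda A\mu^2 e^{-\mu d} + C$, while $|Dw^+|^2 \asymp A^2\mu^2 e^{-2\mu d}$ near $\partial B_1$. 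Consequently,
\begin{equation*}
(1 + |Dw^+|^2)^{\gamma/2} F(D^2 w^+) \lesssim -c\,A^{\gamma+1} \mu^{\gamma+2} e^{-(\gamma+1)\mu d} + C,
\end{equation*}
which at $d = 0$ is $\sim -cA^{\gamma+1}\mu^{\gamma+2}$. This can be driven to $-\infty$ by taking $\mu$ large precisely when $\gamma + 2 > 0$, i.e., $\gamma > -2$; the $-K$ in $w^+_t$ then absorbs the remainder of the right-hand side on the bulk of $N$. Applying \Cref{thm:comp1} yields $w^+ \geq u$ in $N$, and since equality holds at $X_0$, $\partial_\nu u(X_0) \leq \partial_\nu w^+(X_0) = \partial_\nu \varphi(X_0) + A\mu$. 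A symmetric lower barrier $w^- = \varphi - A(1-e^{-\mu d}) - K(|x-x_0|^2 + (t_0-t))$ gives the matching lower bound; the corners are handled by combining the two estimates with the compatibility condition on $\varphi$.

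The main obstacle is the regime $\gamma < 0$, where the degenerate weight $(1+|Dw^+|^2)^{\gamma/2}$ decays as the barrier becomes steeper and partially cancels the strong concavity of the exponential term. The exponent $\gamma + 2$ in the display above captures this tug-of-war: $\mu^2$ from the Hessian competes against $\mu^{|\gamma|}$ absorbed by the weight, and the threshold $\gamma > -2$ is sharp for this construction.
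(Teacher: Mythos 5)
Your overall strategy --- a concave barrier in the distance to the lateral boundary, compared with $u$ via \Cref{thm:comp1}, with the threshold $\gamma>-2$ coming from the competition between the $\mu^2$ of the Hessian and the $\mu^{\gamma}$ of the weight --- is exactly the mechanism the paper uses. The paper's barrier is $w^{\pm}=\varphi\pm\frac{1}{A}\log(1+\frac{A}{B}d)$ on a thin annulus whose width is calibrated so that the barrier itself reaches $\sup_{Q_1}|u-\varphi|$ on the outer edge; this is how the ordering on the ``far'' part of the comparison region is obtained, with no extra correction terms. You instead localize with $K(|x-x_0|^2+(t_0-t))$, and this is where your argument has a genuine gap.

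The problem is the supersolution verification on the bulk of $N$, not at $d=0$. First, the quadratic correction contributes $+2KI$ to $D^2w^+$, hence $+2Kn\Lambda$ inside $F(D^2w^+)$ by \ref{F1}; your displayed bound $(1+|Dw^+|^2)^{\gamma/2}F(D^2w^+)\lesssim -cA^{\gamma+1}\mu^{\gamma+2}e^{-(\gamma+1)\mu d}+C$ suppresses this $K$-dependence, and the subsequent claim that ``the $-K$ in $w^+_t$ absorbs the remainder'' fails outright when $\gamma\ge 0$: there the weight is $\ge 1$, so the right-hand side contains $+2Kn\Lambda\cdot(\text{weight})\ge 2Kn\Lambda$, which $w^+_t=\varphi_t-K$ cannot dominate (nothing forces $2n\Lambda<1$). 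Second, the good term $-cA\mu^2e^{-\mu d}$ dies off for $d\gg 1/\mu$, so to keep it active throughout $N$ you must take $\rho\lesssim 1/\mu$; but then the ordering on $\partial_pN\cap Q_1$ forces $K\gtrsim \|u\|_{L^\infty}\rho^{-2}\sim\mu^2$, and the required inequality becomes $A^{1+\gamma}\mu^{2+\gamma}\gtrsim K\sim\mu^2$, i.e.\ $A^{1+\gamma}\gtrsim\mu^{-\gamma}$, which for the singular range $-2<\gamma\le -1$ is not achievable by taking $A$ and $\mu$ large (at $\gamma=-1$ the left side is identically $1$). So the parameter bookkeeping, which is the entire content of the lemma, does not close as written. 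The fix is essentially to abandon the quadratic localization and let the barrier itself attain $\sup|u-\varphi|$ at the outer edge of a thin region --- which is the paper's construction. Two smaller points: the compatibility condition is not assumed in this lemma and is not needed (the corner estimate follows from $u\in C^1(\overline{Q_1})$ and the bounds on $\partial_bQ_1\cup\partial_sQ_1$ by continuity), and your final bound $\partial_\nu u(X_0)\le\partial_\nu\varphi(X_0)+A\mu$ is only universal once the above parameter choices are shown to be universal.
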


\begin{proof}
Since $Du = D \varphi$ on $\partial_b Q_1$, it is enough to prove the gradient estimates on $\partial_s Q_1$. Let $X_0=(x_0,t_0)  \in \partial_s Q_1$. Then there exists a point $y \in \mathbb{R}^n$ such that $|x_0-y|=1$ and $\{X \in \mathbb{R}^{n+1}: |x-y|<1\} \cap Q_1$ is empty.

Let $m=\sup_{Q_1}|u-\varphi|$ and $\Omega = \{X \in \mathbb{R}^{n+1} : 1< |x-y|< 1+B(e^{Am}-1)/A \}$. We define the barrier functions
\begin{equation*}
	w^{\pm}(x,t) \coloneqq \varphi(x,t) \pm f(d(x)), 
\end{equation*}
where
\begin{align*}
	f(r) \coloneqq \frac{1}{A} \log \left( 1 + \frac{A}{B}r\right)
	\quad \text{and} \quad
	d(x) \coloneqq |x-y|-1,
\end{align*}
for constants $A, B>0$ to be determined later. Then we have
$$D_i w^{\pm}(x,t) = D_i \varphi(x,t) \pm f'(d(x)) \cdot \frac{x_i-y_i}{|x-y|}. $$
We now choose $B^{-1}= 2e^{(2+\frac{2}{\gamma+2}) Am} \|\varphi\|_{C^2(\overline{Q_1})}$. Then since
$$f'(r) = \frac{1}{B+Ar} > \frac{1}{Be^{Am}}\ge 2 \sup_{Q_1}|D\varphi| \quad \text{for all } r \in \left(0, \frac{B}{A}(e^{Am}-1) \right),$$
we obtain
\begin{align*}
	|Dw^{\pm}| & \leq |D\varphi| + f'(d(x)) \leq 2  f'(d(x)) < \frac{2}{B} \quad \text{in } \Omega \cap Q_1.
\end{align*}
 Note that
\begin{equation*}
	D_i d(x)  = \frac{x_i-y_i}{|x-y|}, \qquad 
	D_{ij} d (x)  = \frac{\delta_{ij} }{|x-y|} - \frac{(x_i-y_i)(x_j-y_j)}{|x-y|^3},
\end{equation*}
and
\begin{equation*}
	D_{ij} w^+(x,t) = D_{ij} \varphi (x,t) + \frac{D_{ij} d(x)}{B+Ad(x)} -  \frac{A D_i d(x) D_j d(x)}{(B+Ad(x))^2}. 
\end{equation*}
Since the only nonzero eigenvalue of $e \otimes e$ is $|e|^2$ for $e\in\mathbb{R}^n \setminus \{0\}$, we have 
\begin{align*}
	\mathcal{M}^+ (Dd \otimes D d)= \Lambda \quad \text{and} \quad \mathcal{M}^- (Dd \otimes D d)=\lambda.
\end{align*}
Here $x \otimes y $ denotes the tensor product of $x,y \in \mathbb{R}^n$. Then we can choose sufficiently large $A>0$ depending only on $n$, $\lambda$, $\Lambda$, $\gamma$, and $\|\varphi\|_{C^2(\overline{Q_1})}$ so that
\begin{align*}
	F(D^2 w^+)&\leq \mathcal{M}^+ (D^2 \varphi) + \frac{1}{B+Ad(x)}  \mathcal{M}^+ (D^2 d) - \frac{A}{(B+Ad(x))^2}  \mathcal{M}^- (Dd \otimes D d) \\
	&\leq C \|\varphi\|_{C^2(\overline{Q_1})}  + \frac{n \Lambda -\lambda}{B}  - \frac{\lambda A}{B^2 e^{2Am}} \\
	&< - 3 \lambda A e^{(2+\frac{4}{\gamma+2})Am} \|\varphi\|_{C^2(\overline{Q_1})}^2
\end{align*}
in $\Omega \cap Q_1$. If $\gamma \ge 0$, then $ (1+|Dw^+|^2 )^{\gamma/2} \ge 1$ and so we can choose $A>0$ large enough to obtain
\begin{equation*}
	w_t^+ -  (1+|Dw^+|^2 )^{\gamma/2} F (D^2w^+)  \ge \varphi_t + 3 \lambda A e^{(2+\frac{4}{\gamma+2})Am} \|\varphi\|_{C^2(\overline{Q_1})}^2  >0 \quad \text{in $\Omega \cap Q_1$.}
\end{equation*}
If $-2< \gamma < 0$, then $(1+|Dw^+|^2 )^{\gamma/2} \ge 3^{\gamma} B^{-\gamma}$ provided that $B \leq \sqrt{5}$. We again choose $A>0$ large enough to obtain
\begin{align*}
	w_t^+ -  (1+|Dw^+|^2 )^{\gamma/2} F (D^2w^+)  
	\geq \varphi_t + 3\lambda 6^{\gamma} A e^{(2\gamma+4) Am}  \|\varphi\|_{C^2(\overline{Q_1})}^{2+\gamma}>0 \quad \text{in $\Omega \cap Q_1$. }
\end{align*}
Furthermore, the function $w^+$ satisfies
\begin{align*}
	w^+ &\ge \varphi = u \quad \text{on } \Omega \cap \partial_p Q_1, \\
	w^+ &= \varphi + m \ge u \quad \text{on } \partial_p \Omega \cap Q_1, \\
	w^+(X_0) &= \varphi(X_0)= u(X_0).
\end{align*}
Then the comparison principle \Cref{thm:comp1} implies that 
$w^+ \ge u$ in $\Omega \cap Q_1.$ Thus, we have
\begin{align*}
	u(X)-u(X_0)\leq w^+(X)-w^+(X_0)
	\leq \|\varphi\|_{C^1(\overline{Q_1})}|X-X_0| + \frac{1}{B}d(x) 
	\leq C |X-X_0| .
\end{align*}
for all $X \in \Omega \cap Q_1$. The lower bound follows from a similar argument for $w^-$. Since $u \in C^1(\overline{Q_1})$, we conclude that $|Du| \leq C$ on $\partial_p Q_1$.
\end{proof}

We now develop global gradient estimates based on the Bernstein technique in terms of difference quotient, as introduced in \Cref{sec:introduction}. We recall two quantities, namely, $v\coloneqq|Du|^2$ and its difference quotient counterpart
\begin{align*}
	v^h(x, t) \coloneqq \sum_{k=1}^n \left(\frac{u(x+he_k, t)-u(x,t)}{h}\right)^2.
\end{align*}

\begin{lemma}[Global gradient estimates]\label{globalgradient}
	Suppose that  $ \gamma \in \mathbb{R}$, and $F$ satisfies \ref{F1} and \ref{F3}. Let $\beta \in (1/2, 1)$ and let $u \in C^{2, \beta}(\overline{Q_1})$ satisfy 
	$$u_t=(1+|Du|^2)^{\gamma/2}F(D^2u) \quad \text{in } Q_1.$$ 
	Then we have 
	\begin{align*}
		\sup_{Q_1}|Du|=\sup_{\partial_p Q_1}|Du|.
	\end{align*}
\end{lemma}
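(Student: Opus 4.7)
The plan is to execute the difference-quotient Bernstein technique previewed in the introduction, which avoids having to differentiate $u$ three times (we only have $u \in C^{2,\beta}$). For $h>0$ small and $k \in \{1,\ldots,n\}$, set
\[
  w_k^h(x,t) \coloneqq \frac{u(x+he_k,t) - u(x,t)}{h}, \qquad v^h \coloneqq \sum_{k=1}^n (w_k^h)^2.
\]
I will show that $v^h$ is a classical subsolution of a linear uniformly parabolic equation on the subcylinder $Q_1^h \coloneqq B_1^h \times (-1,0]$, where $B_1^h \coloneqq \{x \in B_1 : x+he_k \in B_1 \text{ for all } k\}$, then apply the parabolic weak maximum principle and let $h \to 0^+$.

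For the subsolution property, write $\Phi(p,M) \coloneqq (1+|p|^2)^{\gamma/2} F(M)$ and subtract the identity $u_t = \Phi(Du,D^2u)$ evaluated at $(x+he_k,t)$ and $(x,t)$. Expressing the resulting difference as an integral along the segment from $(Du(x,t), D^2u(x,t))$ to $(Du(x+he_k,t), D^2u(x+he_k,t))$ and dividing by $h$ yields
\[
  (w_k^h)_t = A_{ij}^h(x,t) \, D_{ij} w_k^h + b_j^h(x,t) \, D_j w_k^h,
\]
with $A^h$ and $b^h$ the natural integral averages of $\partial_M \Phi$ and $\partial_p \Phi$. Since \ref{F3} yields $\lambda I \leq F'(M) \leq \Lambda I$ and $u \in C^{2,\beta}(\overline{Q_1})$ gives a uniform bound on $|Du|$ and $|D^2 u|$ (keeping the scalar factor $(1+|Du|^2)^{\gamma/2}$ bounded above and away from $0$), the matrix $A^h$ is uniformly elliptic and $b^h$ is bounded. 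A direct computation using $A^h \geq 0$ then gives
\[
  (v^h)_t - A_{ij}^h D_{ij} v^h - b_j^h D_j v^h = -2 \sum_{k=1}^n A_{ij}^h (D_i w_k^h)(D_j w_k^h) \leq 0.
\]

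The classical parabolic weak maximum principle applied on $Q_1^h$ then yields $\sup_{Q_1^h} v^h = \sup_{\partial_p Q_1^h} v^h$. As $h \to 0^+$, $v^h \to |Du|^2$ uniformly on $\overline{Q_1}$ (because $u \in C^1(\overline{Q_1})$) and the parabolic boundary $\partial_p Q_1^h$ converges in Hausdorff distance to $\partial_p Q_1$, so uniform continuity of $|Du|^2$ on $\overline{Q_1}$ forces $\limsup_{h \to 0^+} \sup_{\partial_p Q_1^h} v^h \leq \sup_{\partial_p Q_1} |Du|^2$. Combining these inequalities gives $\sup_{Q_1} |Du|^2 \leq \sup_{\partial_p Q_1} |Du|^2$; the reverse inequality is trivial.

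The main technical points I expect are (i) preserving uniform ellipticity of $A^h$ in the singular regime $\gamma < 0$, which relies on the a priori bound on $\|Du\|_{L^\infty(Q_1)}$ coming from $u \in C^{2,\beta}(\overline{Q_1})$ to keep $(1+|Du|^2)^{\gamma/2}$ away from $0$; and (ii) the bookkeeping for the slightly shrunken domain $Q_1^h$ when passing to the limit, as its parabolic boundary approaches but does not literally equal $\partial_p Q_1$.
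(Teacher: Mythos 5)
Your overall strategy is the paper's: approximate $|Du|^2$ by the difference-quotient quantity $v^h$, derive a parabolic differential inequality, and apply a maximum principle. But there is a genuine gap at the central step. When you subtract the equation at $(x+he_k,t)$ and $(x,t)$ and integrate along the segment, the resulting coefficients are
\begin{equation*}
A^{kh}_{ij}(x,t)=\bigl(1+|Du(x,t)|^2\bigr)^{\gamma/2}\int_0^1 F_{ij}\bigl(sD^2u(x+he_k,t)+(1-s)D^2u(x,t)\bigr)\,ds,
\end{equation*}
and similarly for $b^{kh}$; they depend on the direction index $k$, not just on $(x,t)$. Consequently the identity you write,
\begin{equation*}
(v^h)_t-A^h_{ij}D_{ij}v^h-b^h_jD_jv^h=-2\sum_k A^h_{ij}D_iw^h_kD_jw^h_k,
\end{equation*}
is false as stated: what one actually gets is $(v^h)_t=\sum_k\bigl[A^{kh}_{ij}D_{ij}(w^h_k)^2+b^{kh}\cdot D(w^h_k)^2-2A^{kh}_{ij}D_iw^h_kD_jw^h_k\bigr]$, a sum of subsolution inequalities for \emph{different} linear operators, which is not a subsolution inequality for any single one (nor even for a Pucci extremal operator, since $\mathcal{M}^+$ is subadditive in the wrong direction for this purpose). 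So the classical weak maximum principle cannot be applied to $v^h$ at fixed $h$ as you propose.

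Repairing this is exactly where the hypothesis $\beta>1/2$ — which your argument never uses, a telltale sign — enters. One must replace $A^{kh}_{ij}$ by the $k$-independent $F_{ij}(D^2u(x,t))$ (and likewise for the lower-order coefficients). The price is an error term of the form $|A^{kh}-A^0|\cdot|D_{ij}(w^h_k)^2|$, and $D_{ij}(w^h_k)^2$ contains the third-order difference quotient $w^h_kD_{ij}w^h_k$, which is only $O(h^{\beta-1})$ and hence \emph{blows up} as $h\to0$. Since $u\in C^{2,\beta}$ and $F\in C^{1,1}$ give $|A^{kh}-A^0|=O(h^{\beta})$, the product is $O(h^{2\beta-1})\to0$ precisely because $\beta>1/2$. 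This forces the order of operations to be reversed relative to your plan: one cannot close the argument at fixed $h$, but must first pass to the limit $h\to0$ (via the stability theorem for viscosity solutions) to conclude that $v=|Du|^2$ is a \emph{viscosity} subsolution of $v_t\le\mathcal{M}^+_{\lambda',\Lambda'}(D^2v)+C|Dv|^2$, and only then apply a maximum principle — after an exponential change of variables $\overline v=\mu^{-1}(1-e^{\mu(v-V_0)})$ to absorb the quadratic gradient term, followed by the ABP estimate. Your limiting step (uniform convergence $v^h\to v$ and convergence of the parabolic boundaries) is fine, but it cannot substitute for this coefficient-unification and stability argument.
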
	

\begin{proof}
	Let $v \coloneqq |Du|^2$ and 
	\begin{align*}
		v^h(x,t)\coloneqq \sum_{k=1}^n \left(u_k^h(x,t)\right)^2,
	\end{align*}
	where we write
	\begin{align*}
		u_k^h(x, t)\coloneqq\frac{u(x+he_k, t)-u(x, t)}{h} \quad \text{for } k =1,\cdots, n.
	\end{align*}
	
	\noindent {\textbf{(Step 1: An equation satisfied by $v^h$.)}} Since $u \in C^{2, \beta}(\overline{Q_1})$, the following two equations hold in the classical sense:
	\begin{align*}
		u_t(x+he_k, t) &=  (1+|Du(x+he_k, t)|^2  )^{\gamma/2}F(D^2u(x+he_k, t)), \\
		u_t(x,t) &= (1+|Du(x, t)|^2  )^{\gamma/2}F(D^2u(x,t)).
	\end{align*} 
	By combining two equalities, we obtain
	\begin{align*}
		\partial_ t u_k^h  &=  (1+|Du(x, t)|^2  )^{\gamma/2} \frac{ F (D^2u(x+he_k, t) ) - F (D^{2}u(x,t) )}{h} \\
		&\qquad+F (D^{2}u(x+he_k,t) )  \frac{ (1+|Du(x+he_k, t)|^2  )^{\gamma/2} - (1+|Du(x, t)|^2  )^{\gamma/2} }{h}.
	\end{align*}
	Since $F$ satisfies \ref{F3}, we can apply the fundamental theorem of calculus to derive
	\begin{align*}
		&F (D^{2}u(x+he_k, t) )-F (D^{2} u(x, t) ) \\
		&\quad=\int_0^1 \frac{d}{ds} F  (sD^2u(x+he_k, t)+(1-s)D^2u(x, t) ) \, ds \\
		&\quad=\int_0^1 F_{ij} (sD^2u(x+he_k, t)+(1-s)D^2u(x, t) )   ( D_{ij}u(x+he_k, t) - D_{ij} u(x, t)  ) \, ds \\
		&\quad=h a_{ij}^{kh}D_{ij}u_k^h,
	\end{align*}
	where
	\begin{align*}
		a_{ij}^{kh}(x, t)\coloneqq\int_0^1 F_{ij} (sD^2u(x+he_k, t)+(1-s)D^2u(x, t) )\,ds.
	\end{align*}
	Note that since $F$ is uniformly elliptic with ellipticity constants $\lambda$ and $\Lambda$, we have
	\begin{align*}
		\lambda |\xi|^2 \leq F_{ij}(M) \xi_i\xi_j \leq \Lambda|\xi|^2 \quad \text{for any $M \in \mathcal{S}^n$ and $\xi \in \mathbb{R}^n$}, 
	\end{align*}
	which implies that 
	\begin{align}\label{aelliptic}
		\lambda |\xi|^2 \leq a_{ij}^{kh}(x, t) \xi_i\xi_j \leq \Lambda|\xi|^2 \quad \text{for any $(x, t) \in Q_1$ and $\xi \in \mathbb{R}^n$}.
	\end{align}
	In a similar way, we can calculate
	\begin{align*}
		& (1+|Du(x+he_k, t)|^2  )^{\gamma/2} - (1+|Du(x, t)|^2  )^{\gamma/2}\\
		&\quad=\int_0^1\gamma  (1+|\chi^{kh}(s)|^2 )^{\gamma/2-1} \chi^{kh}(s) \cdot  (Du (x+he_k, t)-Du(x, t) )\, ds\\
		&\quad=h\psi^{kh} \cdot Du_k^h,
	\end{align*}
	where 
	\begin{align*}
		[\chi^{kh}(s)](x, t)\coloneqq s Du(x+he_k, t)+(1-s) Du(x, t)
	\end{align*}
	and 
	\begin{align*}
		\psi^{kh}(x, t)\coloneqq\gamma \int_0^1 (1+|\chi^{kh}(s)|^2 )^{\gamma/2-1} \chi^{kh}(s) \,ds.
	\end{align*}
	Since $u \in C^{2, \beta}(\overline{Q_1})$, we have $|[\chi^{kh}(s)](x, t)| \leq 2\|Du \|_{L^{\infty}(Q_1)}$ and so
	\begin{align}\label{ybound}
		|\psi^{kh}(x, t)| \leq \Psi.
	\end{align}
	Here note that the constant $\Psi>0$ can depend only on $\gamma$ and $\|Du\|_{L^{\infty}(Q_1)}$. By combining previous computations, we arrive at
	\begin{align}\label{eq:1}
		\partial_t u_k^h =\left(1+|Du(x, t)|^2 \right)^{\gamma/2} a_{ij}^{kh}D_{ij}u_k^h+F (D^{2}u(x+he_k,t) )  \psi^{kh} \cdot Du_k^h.
	\end{align}
	Multiplying both sides of \eqref{eq:1} by $2u_k^h$ and summing the resulting equations from $k=1$ to $n$, we have
	\begin{equation}\label{eq:2}
		\begin{aligned}
			\partial_t v^h  =\sum_{k=1}^n (1+|Du(x, t)|^2  )^{\gamma/2} a_{ij}^{kh}D_{ij}(u_k^h)^2+\sum_{k=1}^nF (D^{2}u(x+he_k,t) )  \psi^{kh} \cdot D(u_k^h)^2\\
			-2\sum_{k=1}^n (1+|Du(x, t)|^2  )^{\gamma/2} a_{ij}^{kh}D_iu_k^hD_ju_k^h.
		\end{aligned}
	\end{equation}
	
	\noindent 
	{\textbf{(Step 2: An equation satisfied by $v$.)}} In this step, we take a limit $h \to 0$ and employ the standard stability argument to find an equation satisfied by $v$. For this purpose, we first observe that, $a_{ij}^{kh}$ and $\psi^{kh}$ can be approximated by $a_{ij}^0$ and $\psi^0$ which are independent of $h>0$; i.e.
	\begin{equation*}
		\lim_{h\to0}a_{ij}^{kh}=a_{ij}^0 \coloneqq  F_{ij} (D^2u(x, t) )
	\end{equation*}
and
 	\begin{equation*}
		\lim_{h \to 0}\psi^{kh} =\psi^0\coloneqq\gamma  (1+|Du(x,t)|^2 )^{\gamma/2-1}Du(x,t).
	\end{equation*}
	To be precise, since $F$ is $C^{1,1} $ and $u \in C^{2, \beta}(\overline{Q_1})$, we have
	\begin{align*}
		|a_{ij}^{kh}-a_{ij}^0|&=\left|\int_0^1 F_{ij} (sD^2u(x+he_k, t)+(1-s)D^2u(x, t) )\,ds-F_{ij} (D^2u(x, t) )\right|\\
		&\leq \int_0^1 \|F\|_{C^{1,1}(\mathcal{S}^n)} |D^2u(x+he_k, t)-D^2u(x,t)| s \, ds \\
		&\leq \|F\|_{C^{1,1}(\mathcal{S}^n)} \|u\|_{C^{2, \beta}(\overline{Q_1})} h^{\beta}.
	\end{align*}
In a similar way, we have
	\begin{equation*}
		|\psi^{kh}-\psi^0|\leq C(\gamma)\|u\|_{C^{1,1}(\overline{Q_1})}h
	\end{equation*}
and
\begin{equation*}
	|F(D^2u(x+he_k, t))-F(D^2u(x, t))| \leq \|F\|_{C^{0,1}(\mathcal{S}^n)} \|u\|_{C^{2, \beta}(\overline{Q_1})}  h^{\beta}.
\end{equation*}
	Moreover, we can check that for sufficiently small $h>0$,
	\begin{align*}
		|D_{ij}(u_k^h)^2| &\leq 2|u_k^hD_{ij}u_k^h|+2|D_iu_k^hD_ju_k^h| \\
		&\leq 2\|u\|_{C^{0,1}(\overline{Q_1})} \cdot \frac{|D_{ij}u(x+he_k, t)-D_{ij}u(x, t)|}{h}+2\|u\|_{C^{1,1}(\overline{Q_1})}^2\\
		& \leq 4\|u\|_{C^{2, \beta}(\overline{Q_1})}^2 h^{\beta-1}
	\end{align*}
and
\begin{align*}
	|D(u_k^h)^2|=2|u_k^hDu_k^h| \leq 2n \|u\|_{C^2(\overline{Q_1})}^2.
\end{align*}
	Therefore, recalling \eqref{ybound} and $\beta >1/2$, the equation \eqref{eq:2} can be written as
	\begin{align*}
		\partial_t v^h &\leq \sum_{k=1}^n (1+|Du(x, t)|^2  )^{\gamma/2} a_{ij}^{0}D_{ij}(u_k^h)^2 \\
		& \qquad +\sum_{k=1}^n F (D^{2}u(x,t) ) \psi^{0} \cdot D(u_k^h)^2-C|D^2u|^2+O(h^{2\beta-1}) \quad \text{as }  h \to 0 \\
		&= (1+|Du(x, t)|^2  )^{\gamma/2} a_{ij}^{0}D_{ij}v^h \\
		&\qquad +F (D^{2}u(x,t) )  \psi^{0} \cdot Dv^h- C|D^2u|^2+O(h^{2\beta-1}) \quad \text{as }  h \to 0 .
	\end{align*}
	Here we note that, by the ellipticity given in \eqref{aelliptic},
	\begin{align*}
		a_{ij}^{kh}D_iu_k^hD_ju_k^h \geq \lambda |Du_k^h|^2=\lambda|D^2u|^2-o(h) \quad \text{as }  h \to 0.
	\end{align*}
	Hence, by applying the Cauchy-Schwartz inequality,
	\begin{align*}
		\partial_t v^h &\leq  (1+|Du(x, t)|^2  )^{\gamma/2} a_{ij}^{0}D_{ij}v^h \\
		&\quad \quad \quad \;\;\, +F (D^{2}u(x,t) )  \psi^{0} \cdot Dv^h	-C|D^2u|^2+O(h^{2\beta-1}) \quad \text{as }  h \to 0\\
		&\leq \mathcal{M}_{\lambda', \Lambda'}^+(D^2v^h)+C|D^2u| |Dv^h|-C|D^2u|^2+O(h^{2\beta-1}) \quad \text{as }  h \to 0\\
		&\leq \mathcal{M}_{\lambda', \Lambda'}^+(D^2v^h)+C|Dv^h|^2+O(h^{2\beta-1}) \quad \text{as }  h \to 0,
	\end{align*}
	where the ellipticity constants $\lambda', \Lambda'$ depend only on $n$, $\lambda$, $\Lambda$, $\gamma$, and $\|Du\|_{L^{\infty}(Q_1)}$. Since $u \in C^{2, \beta}(\overline{Q_1})$ implies that $Dv^h \to Dv$ uniformly, letting $h \to 0$ together with the stability theorem (see \cite[Proposition 2.9]{CC95}) yields that
	\begin{align*}
		v_t \leq \mathcal{M}_{\lambda', \Lambda'}^+(D^2v)+C|Dv|^2 \quad \text{in the viscosity sense},
	\end{align*}
	where $C>0$ depends only on $n$, $\lambda$, $\Lambda$, $\gamma$, and $\|Du\|_{L^{\infty}(Q_1)}$.
	
\noindent {\textbf{(Step 3: Global gradient estimates.)}}
Set $V_0 \coloneqq \sup_{\partial_p Q_1} |Du|^2$. Then we can choose a universal constant $\mu>0$ such that if we let
\begin{align*}
	\overline{v} \coloneqq \frac{1}{\mu}\left(1-e^{\mu(v-V_0)}\right),
\end{align*}
then $\overline{v}$ satisfies
\begin{align*}
	\overline{v}_t \geq \mathcal{M}_{\lambda', \Lambda'}^-(D^2\overline{v}) \quad \text{in the viscosity sense}.
\end{align*}
By the Alexandroff-Bakelman-Pucci estimate, we have
\begin{align*}
	\overline{v} \geq \inf_{\partial_p Q_1} \overline{v}=0 \quad \text{in $Q_1$}.
\end{align*}
Therefore, we conclude that
\begin{align*}
	|Du|^2=v \leq V_0=\sup_{\partial_p Q_1} |Du|^2 \quad \text{in $Q_1$},
\end{align*}
which finishes the proof.
\end{proof}
\subsection{A priori H\"older estimates for the gradient}
We first develop an interior a priori H\"older estimate for the gradient, whose proof is similar to the one of \Cref{globalgradient}. However, the proof of \Cref{interiorholder} requires an additional effort to control the oscillation of $|Du|$ in an iterative way.
\begin{lemma}[Interior a priori H\"older estimate for the gradient] \label{interiorholder}
Suppose that  $\gamma \in \mathbb{R}$, and $F$ satisfies \ref{F1} and \ref{F3}. Let $\beta \in (1/2, 1)$ and let $u \in C^{2, \beta}(\overline{Q_1})$ satisfy 
$$u_t=(1+|Du|^2)^{\gamma/2}F(D^2u) \quad \text{in }Q_1.$$
Moreover, assume that $|u|+|Du| \leq K$ in $Q_1$, for some $K>0$. Then there exists a constant $\alpha \in (0,1)$ depending only on $n$, $\lambda$, $\Lambda$, $\gamma$, and $K$ such that for any $Q' \subset \joinrel \subset Q_1$, we have 
	\begin{align*}
		[Du]_{C^\alpha(Q')} \leq C d^{-\alpha},
	\end{align*}
where $d=\mathrm{dist}(Q', \partial_pQ_1)$ and $C$ is a constant depending only on $n$, $\lambda$, $\Lambda$, $\gamma$, and  $K$. In addition, we have
\begin{align*}
	\sum_{k=1}^n \osc_{Q_r(Y)} D_k u \leq C \left(\frac{r}{\rho}\right)^\alpha  \sum_{k=1}^n\osc_{Q_\rho(Y)} D_k u  \quad \text{for $0< r \leq \rho \leq \textnormal{dist}(Y, \partial_p Q_1)$}.
\end{align*}
\end{lemma}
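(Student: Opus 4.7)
\emph{Plan.} The strategy is to differentiate the equation using the difference-quotient Bernstein device of \Cref{globalgradient} to obtain a Pucci-type viscosity inequality for each pure derivative $w=D_k u$, and then extract the oscillation decay via the parabolic Krylov--Safonov machinery iterated on dyadic scales.

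\textbf{Step 1 (Equation for $D_k u$).} Starting from the classical identity \eqref{eq:1} for $u_k^h$ established in (Step 1) of \Cref{globalgradient}, the hypothesis $|u|+|Du|\leq K$ confines $(1+|Du|^2)^{\gamma/2}$ to a compact positive interval depending only on $\gamma$ and $K$. Combining this with the uniform ellipticity \eqref{aelliptic} of $a_{ij}^{kh}$ and the bound \eqref{ybound} on $\psi^{kh}$, $u_k^h$ is a classical sub- and supersolution of Pucci inequalities with drift:
\begin{equation*}
\mathcal{M}_{\lambda',\Lambda'}^-(D^2 u_k^h) - B|F(D^2 u(\cdot+he_k,\cdot))||Du_k^h| \leq \partial_t u_k^h \leq \mathcal{M}_{\lambda',\Lambda'}^+(D^2 u_k^h) + B|F(D^2 u(\cdot+he_k,\cdot))||Du_k^h|,
\end{equation*}
with $\lambda',\Lambda',B$ depending only on $n,\lambda,\Lambda,\gamma,K$. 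Using the $C^{2,\beta}$-regularity of $u$ and the viscosity stability theorem (cf.\ \cite[Proposition~2.9]{CC95}), letting $h\to 0$ yields the same inequalities for $w=D_k u$ in the viscosity sense.

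\textbf{Step 2 (Oscillation improvement).} Fix $Y=(y,s)\in Q_1$ and let $\rho\leq\textnormal{dist}(Y,\partial_p Q_1)$. Set $\mu(\rho):=\sum_{k=1}^n\osc_{Q_\rho(Y)}D_k u$. The goal is a one-step improvement $\mu(\rho/2)\leq(1-\theta)\mu(\rho)$ with $\theta\in(0,1)$ depending only on $n,\lambda,\Lambda,\gamma,K$. After the parabolic rescaling $(x,t)\mapsto (y+\rho x, s+\rho^2 t)$ and normalizing $w=D_k u$ by $\mu(\rho)$, the resulting function $\tilde w$ has unit oscillation on $Q_1$ and still satisfies the Pucci sub/super pair, with constants controlled solely by $n,\lambda,\Lambda,\gamma,K$; the drift coefficient scales favourably in $\rho$. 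Summing the $n$ inequalities and performing an exponential substitution in the spirit of (Step 3) of \Cref{globalgradient} absorbs the $|D\tilde w|$-drift into the Pucci operator, after which the Krylov--Safonov weak Harnack inequality \cite{Wan92a, Wan92b} delivers the oscillation improvement.

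\textbf{Step 3 (Iteration and the main obstacle).} Iterating the one-step improvement on dyadic scales $\rho_j = 2^{-j}\rho$ yields the claimed decay $\mu(r)\leq C(r/\rho)^\alpha\mu(\rho)$ for $0<r\leq\rho$, and the local Hölder estimate $[Du]_{C^\alpha(Q')}\leq C d^{-\alpha}$ follows in the standard way by applying this at every $Y\in Q'$ with $\rho=d$. The principal difficulty is the drift term $B|F(D^2 u)||Dw|$ in Step 1: the coefficient $|F(D^2 u)|$ is controlled only by $\Lambda|D^2 u|$, not by a universal constant depending on $K$ alone. The crux of the proof is therefore Step 2, where one must show that, after the right rescaling and exponential change of variable, the drift becomes subordinate to the Pucci terms uniformly in $\rho$ and in the pointwise $C^{2,\beta}$-norm of $u$, so that a single Krylov--Safonov application produces a universal oscillation improvement.
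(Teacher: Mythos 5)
You correctly identify the central obstruction — the drift coefficient $|F(D^2u)|\sim|D^2u|$ in the differentiated equation is not controlled by $K$ — but your Step~2 does not actually overcome it, and as written the argument has a genuine gap. The exponential substitution of (Step 3) of \Cref{globalgradient} absorbs a quadratic term $C|Dw|^2$ only when $C$ is a \emph{universal constant}: one picks $\mu$ large depending on $C$. When the coefficient is the unbounded function $|D^2u(x,t)|$, no admissible choice of $\mu$ works, and the parabolic rescaling $(x,t)\mapsto(y+\rho x,s+\rho^2t)$ does not help either, since any smallness gained in $\rho$ is paid for by the (inadmissible) $C^{2,\beta}$-norm of $u$. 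Likewise, the Krylov--Safonov weak Harnack inequality with a first-order term requires the drift in $L^\infty$ (or at least $L^{n+1}$) with a norm entering the constants, which is exactly what is unavailable. Your closing sentence — ``one must show that \dots the drift becomes subordinate to the Pucci terms'' — is precisely the unproved step.

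The paper's resolution is to never work with $w=D_ku$ alone. It takes the Bernstein combination $w^{\pm}=\pm\delta D_ku+|Du|^2$ with $\delta=10nK$. The $|Du|^2$ part contributes, through the difference-quotient computation of \Cref{globalgradient}, the \emph{good} term $-2(1+|Du|^2)^{\gamma/2}a_{ij}^{kh}D_iu_k^hD_ju_k^h\leq-2\lambda'|D^2u|^2$, which absorbs the bad drift $C|D^2u|\,|Dw^{\pm}|$ by Cauchy--Schwarz, leaving only $C|Dw^{\pm}|^2$ with universal $C$; only then does the exponential substitution apply. The price is that $w^{\pm}$ is merely a \emph{subsolution} (the sign of the good term breaks the symmetry), so a two-sided oscillation decay cannot come from sub- and supersolution inequalities for $D_ku$ as you propose. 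Instead the paper applies the weak Harnack inequality to $W^{\pm}-w^{\pm}$ (exponentiated) and exploits the algebraic identity $w_k^++w_k^-=2|Du|^2$ to show that at least one of $W^{\pm}-w^{\pm}$ is bounded below by $\tfrac14\osc_{Q_{4r}}w^{\pm}$, after which $\osc_{Q_r}w^+\leq\frac{4c-1}{4c}\osc_{Q_{4r}}w^+$ and the standard iteration give the stated decay for $\sum_k\osc D_ku$. Both the auxiliary function $w^{\pm}$ and this one-sided/alternative mechanism are missing from your proposal.
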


\begin{proof}
	Let $w\coloneqq\delta D_lu+|Du|^2$ for $l=1, ..., n$, where $\delta>0$ to be determined. As in \Cref{globalgradient}, we approximate $w$ in sense of difference quotient:
	\begin{align*}
		w^h(x,t)\coloneqq \delta u_l^h(x,t)+ \sum_{k=1}^n \left(u_k^h(x,t)\right)^2,
	\end{align*}
where we write
\begin{align*}
	u_k^h(x, t)\coloneqq\frac{u(x+he_k, t)-u(x, t)}{h} \quad \text{for } k =1,\cdots, n.
\end{align*}
By following Step 1 and Step 2 in the proof of \Cref{globalgradient}, we obtain
\begin{align*}
	\partial_t w^h  \leq \mathcal{M}_{\lambda', \Lambda'}^+(D^2w^h)+C|Dw^h|^2+O(h^{2\beta-1}) \quad \text{as }  h \to 0,
\end{align*}
where the ellipticity constants $\lambda', \Lambda'$ depend only on $n$, $\lambda$, $\Lambda$, $\gamma$, and $K$. Since $u \in C^{2, \beta}(\overline{Q_1})$ implies that $Dw^h \to Dw$ uniformly, letting $h \to 0$ together with the stability theorem yields that
\begin{align*}
	w_t \leq \mathcal{M}_{\lambda', \Lambda'}^+(D^2w)+C|Dw|^2 \quad \text{in the viscosity sense},
\end{align*}
where $C>0$ depends only on $n$, $\lambda$, $\Lambda$, $\gamma$, and $K$.

We are now ready to derive uniform interior H\"older estimate for $Du$. We suppose that 
$$u_t=(1+|Du|^2)^{\gamma/2}F(D^2u) \quad \text{in }Q_{4r}(x_0,t_0).$$ 
For brevity, we write $Q_{4r}=Q_{4r}(x_0, t_0)$ and $Q_r=Q_r(x_0, t_0)$. We first choose the constant $\delta=10nK$ and choose $l \in \{1, ..., n\}$ so that 
\begin{align*}
	\osc_{Q_{4r}} D_ku \leq \osc_{Q_{4r}} D_lu \quad \text{for all $k \in \{1, .., n\}$}.
\end{align*}
We define
\begin{align*}
	w^{\pm}=w_k^{\pm} \coloneqq \pm \delta D_ku+|Du|^2 \quad \text{and} \quad W^{\pm}_k \coloneqq \sup_{Q_{4r}}w_k^{\pm}.
\end{align*} 
Then it is easy to check that
\begin{align*}
	\osc_{Q_{4r}} |Du|^2 \leq 2nM \osc_{Q_{4r}} D_lu
\end{align*}
and so
\begin{align*}
	8nM \osc_{Q_{4r}}D_ku \leq	\osc_{Q_{4r}}w^{\pm} \leq 12nM \osc_{Q_{4r}}D_ku.
\end{align*}
On the other hand, we note that
\begin{align*}
	\partial_t w^{\pm} \leq \mathcal{M}_{\lambda', \Lambda'}^{+}(D^2w^{\pm})+C|Dw^{\pm}|^2 \quad \text{in the viscosity sense}.
\end{align*}
We can choose a universal constant $\mu>0$ such that if we let
\begin{align*}
	\overline{w} \coloneqq \frac{1}{\mu}\left(1-e^{\mu(w^{\pm}-W^{\pm})}\right),
\end{align*}
then $\overline{w}$ satisfies
\begin{align*}
	\overline{w}_t \geq \mathcal{M}_{\lambda', \Lambda'}^-(D^2\overline{w}) \quad \text{in the viscosity sense}.
\end{align*}
Since $\overline{w}$ is a nonnegative supersolution, we can apply the weak Harnack inequality (see \cite[Theorem 6.18]{Lie96} or \cite[Corollary 4.14]{Wan92b}) to obtain
\begin{align*}
	 \left(\fint_{\Theta_r} \overline{w}^{\varepsilon_0} \,dx\,dt\right)^{1/\varepsilon_0}\leq c \, \inf_{Q_r}\overline{w} \quad \text{for some $\varepsilon_0>0$},
\end{align*}
where $\Theta_r \coloneqq Q_r(x_0, t_0-4r^2)$. Since
\begin{align*}
	c_1\, (W^{\pm}-w^{\pm}) \leq \overline{w} \leq W^{\pm}-w^{\pm},
\end{align*}
we have
\begin{align}\label{eq:comp1}
	\left(\fint_{\Theta_r} (W^{\pm}-w^{\pm})^{\varepsilon_0} \,dx\,dt\right)^{1/\varepsilon_0}\leq c \, \inf_{Q_r}(W^{\pm}-w^{\pm}).
\end{align}
Here we observe that
\begin{align}\label{eq:comp2}
	\inf_{Q_r}(W^{\pm}-w^{\pm})=\sup_{Q_{4r}}w^{\pm}-\sup_{Q_r}w^{\pm} \leq\osc_{Q_{4r}}w^{\pm}-\osc_{Q_r}w^{\pm}.
\end{align} 
Moreover, since $w_k^++w_k^-=2|Du|^2$, we have
\begin{align*}
	(W^+-w^+)+(W^--w^-) &\geq \delta \osc_{Q_{4r}}D_ku +2\left(\inf_{Q_{4r}}|Du|^2-|Du|^2\right)\\
	&\geq 6nM \osc_{Q_{4r}}D_ku \\
	&\geq \frac{1}{2}\osc_{Q_{4r}}w^{\pm} \quad \text{in $Q_{4r} (\supset \Theta_r)$}.
\end{align*}
Thus, the inequality
\begin{align}\label{eq:comp3}
	W^{\pm}-w^{\pm} \geq \frac{1}{4}\osc_{Q_{4r}}w^{\pm}
\end{align}
holds for either $w^{+}$ or $w^-$. Without loss of generality, we may assume it holds for $w^+$. By combining the estimate \eqref{eq:comp1}, \eqref{eq:comp2}, and \eqref{eq:comp3}, we arrive at
\begin{align*}
	\osc_{Q_r} w^+ \leq \frac{4c-1}{4c} \osc_{Q_{4r}} w^+.
\end{align*}
By following the standard iteration argument (see \cite[Lemma 13.5]{GT01}, for example), we conclude that there exists a constant $\alpha \in (0,1)$ depending only on $n$, $\lambda$, $\Lambda$, $\gamma$, and $K$ such that 
\begin{align*}
	\sum_{k=1}^n \osc_{Q_r(Y)} D_k u \leq C \left(\frac{r}{\rho}\right)^\alpha   \sum_{k=1}^n\osc_{Q_\rho(Y)} D_k u   \quad \text{for $0< r \leq \rho \leq \textnormal{dist}(Y, \partial_p Q_1)$}
\end{align*}
and so
\begin{align*}
	\osc_{Q_r}D_ku \leq Cr^{\alpha} \quad \text{for any $k =1,\cdots,n$},
\end{align*}
where $C>0$ is a constant depending only on $n$, $\lambda$, $\Lambda$, $\gamma$, and $K$.
\end{proof}

If $Y \in \partial_b Q_1 \cup \partial_c Q_1$, we understand $Q_r(Y)$ as $B_r(y) \times [-1,r^2-1)$.
\begin{lemma}[Pointwise Boundary $C^{1, \alpha}$-estimate] \label{lem:bd_c1a}
Suppose that  $\gamma \in \mathbb{R}$ and $F$ satisfies \ref{F1}. Let $u \in C^1(\overline{Q_1})$ be a viscosity solution of \eqref{eq:mean} with $\varphi\in C^{2} (\overline{Q_1})$. Then for each $Y\in \partial_p Q_1$, there exist constants $\alpha \in (0,1)$ depending only on $n$, $\lambda$, $\Lambda$, $\gamma$, and $\|\varphi\|_{C^2(\overline{Q_1})}$ and a vector-valued function $G \in C^{\alpha} (\partial_p Q_1 \cap Q_{1/2}(Y), \mathbb{R}^n)$  such that for any $X_0 \in \partial_p Q_1 \cap Q_{1/2}(Y)$, we have
\begin{equation*}
	|u(X) - u(X_0) - G(X_0)\cdot(x-x_0)| \leq Cd(X,X_0)^{1+\alpha} \quad \text{for all } X \in Q_1 \cap Q_1(Y),
\end{equation*}
where $C$ is a constant depending only on $n$, $\lambda$, $\Lambda$, $\gamma$, and $\|\varphi\|_{C^{2}(\overline{Q_1})}$.

\end{lemma}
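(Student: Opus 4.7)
The plan is to reduce the equation to a uniformly elliptic one by exploiting the gradient bounds already established, and then apply the classical boundary $C^{1,\alpha}$ theory via the barrier method. By \Cref{lem:max_prin}, \Cref{lem:bdry_Du}, and \Cref{globalgradient} (applied to $u$ in the regularity class in which this lemma will actually be invoked), we have $\|u\|_{L^{\infty}(Q_1)} + \|Du\|_{L^{\infty}(Q_1)} \leq K$, where $K$ depends only on $n, \lambda, \Lambda, \gamma, \|\varphi\|_{C^2(\overline{Q_1})}$. With this bound the factor $(1+|Du|^2)^{\gamma/2}$ is confined to a compact subinterval of $(0,\infty)$, so the operator $(p, M) \mapsto (1+|p|^2)^{\gamma/2} F(M)$ is uniformly elliptic in $M$ with constants $\lambda', \Lambda'$ depending only on $\lambda, \Lambda, \gamma, K$. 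Accordingly $u$ may be regarded as a viscosity solution of a uniformly elliptic fully nonlinear parabolic equation with $C^2$ boundary data $\varphi$.

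For each $X_0 \in \partial_p Q_1 \cap Q_{1/2}(Y)$, I set $G(X_0) := Du(X_0) \in \mathbb{R}^n$, which is continuous in $X_0$ by $u \in C^1(\overline{Q_1})$, and define $P_{X_0}(X) := u(X_0) + G(X_0) \cdot (x-x_0)$. To obtain
\begin{equation*}
|u(X) - P_{X_0}(X)| \leq C \, d(X, X_0)^{1+\alpha},
\end{equation*}
I would construct two-sided barriers $w^{\pm}(X) := P_{X_0}(X) \pm \eta(X)$, with $\eta$ positive and comparable to $d(\cdot, X_0)^{1+\alpha}$ near $X_0$, engineered so that $w^+$ (resp.\ $w^-$) is a classical supersolution (resp.\ subsolution) of the uniformly elliptic equation in a cylinder $Q_r(X_0) \cap Q_1$ for some small $r > 0$. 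On the parabolic boundary of this set, the inequalities $w^- \leq u \leq w^+$ will be secured as follows: on the portion of $\partial_p Q_1$ near $X_0$, Taylor's theorem combined with $u = \varphi$ yields $|u - P_{X_0}| \leq C \, d^2 \leq C \, d^{1+\alpha}$; on the interior cap $\partial Q_r(X_0) \cap Q_1$, the amplitude of $\eta$ is chosen large enough to dominate $u - P_{X_0}$ directly from the $L^\infty$ bound. An application of the comparison principle \Cref{thm:comp1} then yields the pointwise expansion at $X_0$.

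The Hölder continuity of $G$ on $\partial_p Q_1 \cap Q_{1/2}(Y)$ can then be extracted from the expansion by a standard two-point argument: evaluating the affine approximations at $X_0$ and $X_1$ at a common intermediate point $X$ converts a difference in $G$ into a controlled error of order $d(X_0, X_1)^\alpha$. The principal technical obstacle lies in the construction of $\eta$ at corner points $X_0 \in \partial_c Q_1$, where the parabolic boundary fails to be smooth and where the compatibility condition on $\varphi$ is essential to synchronize the tangential derivatives inherited from the bottom and lateral faces meeting at the corner; for genuine bottom and lateral points, a refined log-type barrier along the lines of \Cref{lem:bdry_Du}, or a polynomial ansatz $\eta(X) = A(|x - x_0|^2 + |t - t_0|)^{(1+\alpha)/2}$, should verify the required super/subsolution property for a sufficiently small exponent $\alpha$ depending only on the listed parameters.
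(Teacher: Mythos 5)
Your opening reduction is exactly the paper's: using \Cref{lem:max_prin}, \Cref{lem:bdry_Du}, and \Cref{globalgradient} to confine $(1+|Du|^2)^{\gamma/2}$ to a compact subinterval of $(0,\infty)$, so that $u$ satisfies $\mathcal{M}^-_{\lambda',\Lambda'}(D^2u)\le u_t\le \mathcal{M}^+_{\lambda',\Lambda'}(D^2u)$ with constants depending only on the listed parameters. The paper stops there and invokes the known boundary $C^{1,\alpha}$ theory for uniformly parabolic equations (\cite[Theorems 2.1 and 2.11]{Wan92b}, \cite{SS14}, \cite{ABV20}). You instead try to reprove that boundary estimate with a one-shot barrier, and this is where there is a genuine gap.

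The proposed barrier $w^{\pm}=P_{X_0}\pm\eta$ with $\eta=A\bigl(|x-x_0|^2+|t-t_0|\bigr)^{(1+\alpha)/2}$ cannot satisfy the required super/subsolution property for any $\alpha>0$. Writing $\rho=\bigl(|x-x_0|^2+|t-t_0|\bigr)^{1/2}$, the spatial Hessian of $\eta$ is positive definite with all eigenvalues of order $A\rho^{\alpha-1}$, so $\mathcal{M}^+_{\lambda',\Lambda'}(D^2w^+)=\mathcal{M}^+_{\lambda',\Lambda'}(D^2\eta)\gtrsim \lambda' A\rho^{\alpha-1}>0$, while $\eta_t$ is of the same order $A\rho^{\alpha-1}$ and is in fact negative for $t<t_0$; hence $w^+_t\ge \mathcal{M}^+(D^2w^+)$ fails near $X_0$ regardless of how small $\alpha$ is. This is not a corner-point technicality: any nonnegative function vanishing at $X_0$ to order $1+\alpha$ and comparable to $d(\cdot,X_0)^{1+\alpha}$ on a full spatial neighborhood is forced to have a positive-definite-in-average Hessian there, so it cannot be a supersolution of the maximal Pucci equation. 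Moreover, even if a valid barrier existed at one scale, it would only control $u-P_{X_0}$ at that scale; the stated inequality for \emph{all} $X\in Q_1\cap Q_1(Y)$ with the power $d(X,X_0)^{1+\alpha}$ requires a decay estimate at every dyadic scale, i.e.\ the Krylov-type oscillation iteration for $(u-\varphi)/\mathrm{dist}(\cdot,\partial_p Q_1)$ (or an improvement-of-flatness/compactness scheme), which is precisely the content of the results the paper cites. Either carry out that iteration or, as the paper does, quote the boundary $C^{1,\alpha}$ estimates for uniformly parabolic equations; the subsequent two-point argument you give for the H\"older continuity of $G$ is then fine.
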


\begin{proof}
Since $u \in C^1(\overline{Q_1})$, we observe that
\begin{align*}
		\mathcal{M}^-_{\lambda', \Lambda'}(D^2u) \leq u_t \leq \mathcal{M}^+_{\lambda', \Lambda'}(D^2u) \quad \text{in $Q_1$},
	\end{align*}
for the ellipticity constants $0<\lambda' \leq  \Lambda'$ which depend only on $n$, $\lambda$, $\Lambda$, $\gamma$, and $\|\varphi\|_{C^2(\overline{Q_1})} $ by \Cref{lem:max_prin}, \Cref{lem:bdry_Du}, and \Cref{globalgradient}. Then by applying the boundary $C^{1,\alpha}$-estimate for uniformly parabolic equations (see  \cite[Theorem 2.1, Theorem 2.11]{Wan92b}, \cite[Theorem 1.1]{SS14}, and \cite[Theorem 2.4]{ABV20}), we have the desired conclusion.
\end{proof}
By using the following lemma, we can obtain $C^{1,\alpha}$-regularity near the parabolic boundary by connecting interior $C^{1,\alpha_1}$-regularity with pointwise boundary $C^{1,\alpha_2}$-regularity. It can be found in \cite[Lemma 12.4]{Lie96}.
\begin{lemma} \label{lem:comb_int_bdry}
Let $u \in C^1(Q_R(Y)) \cap C(\overline{Q_R(Y)})$. Suppose that there exists constants $A\ge0$, $B\ge0$, and $\alpha \in (0,1)$ such that
\begin{equation} \label{eq:int_hol_Du}
	\sum_{k=1}^n\osc_{Q_r(Z)} D_k u \leq A \left(\frac{r}{\rho}\right)^\alpha \left(\sum_{k=1}^n \osc_{Q_\rho(Z)} D_k u + B \rho^\alpha \right)
\end{equation}
whenever $r\leq \rho$ and $Q_\rho (Z) \subset Q_R(Y)$. Then for any $a \in \mathbb{R}$ and $e \in \mathbb{R}^n$, we have
$$\sup_{Q_{R/2}(Y)} |Du-e| \leq C \left( \frac{1}{R} \sup_{Q_R(Y)} |u-a- e\cdot x | + B R^{\alpha}\right),$$
where $C$ is a constant depending only on $A$, $n$, and $\alpha$. 
\end{lemma}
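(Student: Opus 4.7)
The plan is to obtain a Campanato-type absorption inequality for the quantity $\Phi(r) \coloneqq \sup_{Q_r(Y)} |Dw|$, where $w(X) \coloneqq u(X) - a - e \cdot x$, and then conclude via a standard iteration. Note that $D_k w = D_k u - e_k$ and $\osc_{Q_r(Z)} D_k w = \osc_{Q_r(Z)} D_k u$, so that \eqref{eq:int_hol_Du} holds verbatim for $w$. Setting $M \coloneqq \sup_{Q_R(Y)} |w|$, the target is $\Phi(R/2) \leq C(M/R + BR^\alpha)$.

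The core pointwise estimate is obtained as follows. Fix $R/2 \leq r < R' \leq R$, a point $Z=(z,s) \in Q_r(Y)$, a direction $e_k$, and a free parameter $\eta \in (0, R'-r]$. A short spatial and temporal check yields $Q_\eta(Z) \subset Q_{R'-r}(Z) \subset Q_{R'}(Y) \subset Q_R(Y)$, so \eqref{eq:int_hol_Du} is applicable. The one-dimensional mean value theorem applied to $\tau \mapsto w(z+\tau e_k, s)$ on $[-\eta/2,\eta/2]$ produces a point $X_*^k \in Q_\eta(Z)$ with $|D_k w(X_*^k)| \leq 2M/\eta$, so that
\begin{equation*}
    |D_k w(Z)| \leq \frac{2M}{\eta} + \osc_{Q_\eta(Z)} D_k w.
\end{equation*}
Dominating $\osc_{Q_\eta(Z)} D_k w$ by $\sum_j \osc_{Q_\eta(Z)} D_j w$, invoking \eqref{eq:int_hol_Du} at inner scale $\eta$ and outer scale $R'-r$, and using the trivial bound $\sum_j \osc_{Q_{R'-r}(Z)} D_j w \leq 2n\, \Phi(R')$ give
\begin{equation*}
    |D_k w(Z)| \leq \frac{2M}{\eta} + \widetilde{A}\left( \frac{\eta}{R'-r} \right)^\alpha \bigl( \Phi(R') + B(R'-r)^\alpha \bigr),
\end{equation*}
with $\widetilde{A}$ depending only on $A$ and $n$.

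Choosing $\eta = \epsilon(R'-r)$ with $\epsilon$ fixed so that $\sqrt{n}\,\widetilde{A}\, \epsilon^\alpha \leq 1/2$, then taking the supremum over $k$ and $Z \in Q_r(Y)$ (and using $|Dw| \leq \sqrt{n}\,\max_k |D_k w|$), yields
\begin{equation*}
    \Phi(r) \leq \tfrac{1}{2} \Phi(R') + \frac{CM}{R'-r} + C B R^\alpha \qquad \text{for all } R/2 \leq r < R' \leq R.
\end{equation*}
Since $u \in C^1(Q_R(Y))$, the function $Du$ is continuous and hence bounded on $\overline{Q_{R'}(Y)} \subset Q_R(Y)$ whenever $R' < R$, so $\Phi$ is bounded on $[0, R']$. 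The classical absorption lemma (if $\Phi$ is nonnegative and bounded on $[r_0, R_0]$ and $\Phi(s) \leq \theta\, \Phi(t) + A_1/(t-s) + A_2$ for all $r_0 \leq s < t \leq R_0$ with $\theta < 1$, then $\Phi(r_0) \leq C(\theta)(A_1/(R_0-r_0) + A_2)$) applied on $[R/2, R']$ produces $\Phi(R/2) \leq C\bigl(M/(R'-R/2) + B R^\alpha \bigr)$. Sending $R' \uparrow R$ concludes the argument.

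I expect the most delicate step to be the final absorption: the pointwise inequality contains the a priori unknown quantity $\Phi(R')$ on the right, and the iteration lemma only becomes applicable after one introduces the auxiliary radius $R' < R$ and verifies $\Phi(R') < \infty$ via the interior $C^1$ regularity of $u$. A secondary bookkeeping concern is the verification of the parabolic cylinder inclusions required at every scale, which ultimately reduces to elementary inequalities such as $r \leq R \leq 2R - r$.
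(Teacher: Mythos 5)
Your proof is correct, and since the paper simply cites \cite[Lemma 12.4]{Lie96} without reproducing an argument, your mean-value-theorem-plus-absorption scheme is precisely the standard proof behind that reference: the cylinder inclusions $Q_\eta(Z)\subset Q_{R'-r}(Z)\subset Q_{R'}(Y)$ check out (via $r^2+(R'-r)^2\leq R'^2$), the choice $\eta=\epsilon(R'-r)$ with $\sqrt{n}\,\widetilde{A}\,\epsilon^\alpha\leq 1/2$ legitimately absorbs $\Phi(R')$, and the finiteness of $\Phi(R')$ for $R'<R$ needed by the iteration lemma is correctly supplied by the interior $C^1$ hypothesis. No gaps.
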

Combining \Cref{interiorholder}, \Cref{lem:bd_c1a}, and \Cref{lem:comb_int_bdry}, we obtain $C^{1, \alpha}$-regularity near $\partial_p Q_1$.

\begin{lemma} [$C^{1, \alpha}$-estimate near $\partial_p Q_1$]\label{lem:bd_c1a_n}
Suppose that  $\gamma \in \mathbb{R}$, and $F$ satisfies \ref{F1} and \ref{F3}. Let $\beta \in (1/2, 1)$ and  let $u \in C^{2,\beta}(\overline{Q_1})$ be a solution of \eqref{eq:mean} with $\varphi\in C^{2} (\overline{Q_1})$. Then there exists constant $\alpha \in (0,1)$ depending only on $n$, $\lambda$, $\Lambda$, $\gamma$, and $\|\varphi\|_{C^2(\overline{Q_1})}$ such that for any $X_0 \in \partial_p Q_1$, $r \in (0,1/2)$, and $k=1,\cdots,n$, we have
\begin{equation*}
	\osc _{Q_1\cap Q_r(X_0) } D_k u \leq Cr^{\alpha} , 
\end{equation*}
where $C$ is a constant depending only on $n$, $\lambda$, $\Lambda$, $\gamma$, and $\|\varphi\|_{C^{2}(\overline{Q_1})}$.
\end{lemma}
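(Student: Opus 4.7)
The plan is to combine \Cref{interiorholder}, \Cref{lem:bd_c1a}, and \Cref{lem:comb_int_bdry}, after using \Cref{lem:max_prin}, \Cref{lem:bdry_Du}, and \Cref{globalgradient} to control $K \coloneqq \|u\|_{L^\infty(Q_1)} + \|Du\|_{L^\infty(Q_1)}$ in terms of $\|\varphi\|_{C^2(\overline{Q_1})}$ and the structural constants. I fix a single $\alpha \in (0,1)$ small enough to serve as the H\"older exponent in both \Cref{interiorholder} and \Cref{lem:bd_c1a}.

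\textbf{Core pointwise bound.} For any interior point $Y \in Q_1$ with $\delta_Y \coloneqq \textnormal{dist}(Y, \partial_p Q_1) \leq 1/4$ and a nearest boundary point $Z_Y \in \partial_p Q_1$, I claim
\[
	|Du(Y) - G(Z_Y)| \leq C\, \delta_Y^{\alpha},
\]
where $G(Z_Y)$ is the vector produced by \Cref{lem:bd_c1a}. An elementary computation with the definition of the parabolic distance gives $Q_{\delta_Y}(Y) \subset Q_1$, so by \Cref{interiorholder} the oscillation hypothesis \eqref{eq:int_hol_Du} of \Cref{lem:comb_int_bdry} holds on $Q_{\delta_Y}(Y)$ with $B=0$. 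Since $d(X, Z_Y) \leq 2\delta_Y \leq 1/2 < 1$ for every $X \in Q_{\delta_Y}(Y)$, one may invoke \Cref{lem:bd_c1a} with reference point $Z_Y$ to obtain
\[
	\sup_{Q_{\delta_Y}(Y)} \bigl|u(X) - u(Z_Y) - G(Z_Y) \cdot (x - z_Y)\bigr| \leq C\, \delta_Y^{1+\alpha}.
\]
Feeding $e = G(Z_Y)$ and $a = u(Z_Y) - G(Z_Y) \cdot z_Y$ into \Cref{lem:comb_int_bdry} yields the claimed bound.

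\textbf{Assembly.} Fix $X_0 \in \partial_p Q_1$ and $r \in (0, 1/2)$. The range $r \in [1/4, 1/2)$ is immediate from the global bound $|Du| \leq K$, so assume $r \leq 1/4$. For any $Y_1, Y_2 \in Q_1 \cap Q_r(X_0)$ with nearest boundary points $Z_i \coloneqq Z_{Y_i}$, the inequality $\delta_{Y_i} \leq d(Y_i, X_0) < r$ forces $d(Z_i, X_0) \leq 2r \leq 1/2$, so $Z_i \in \partial_p Q_1 \cap Q_{1/2}(X_0)$. Applying \Cref{lem:bd_c1a} with reference point $X_0$ then supplies the $C^\alpha$-continuity of $G$ between $Z_1$ and $Z_2$, and combining this with the core pointwise bound and the triangle inequality gives
\[
	|Du(Y_1) - Du(Y_2)| \leq C \delta_{Y_1}^{\alpha} + C\, d(Z_1, Z_2)^{\alpha} + C \delta_{Y_2}^{\alpha} \leq C\, r^{\alpha}.
\]
Taking suprema produces $\osc_{Q_1 \cap Q_r(X_0)} D_k u \leq C\, r^{\alpha}$ for each $k$, as desired.

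The main obstacle is the modest geometric bookkeeping imposed by the paper's cylinder convention: verifying $Q_{\delta_Y}(Y) \subset Q_1$ (with the forward-in-time convention near $\partial_b Q_1 \cup \partial_c Q_1$), matching the localization radii $1$ and $1/2$ in \Cref{lem:bd_c1a} with the target cylinder $Q_r(X_0)$, and confirming that the $G$ produced by \Cref{lem:bd_c1a} from different reference points is the same function (which is automatic because $u \in C^{2,\beta}(\overline{Q_1})$ makes $Du|_{\partial_p Q_1}$ well-defined and uniquely determined by the expansion). Once these containments are checked, the three-way combination is essentially mechanical.
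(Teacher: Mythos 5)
Your proposal is correct and follows essentially the same route as the paper: use \Cref{interiorholder} to verify the hypothesis of \Cref{lem:comb_int_bdry} on a cylinder of radius comparable to $\textnormal{dist}(Y,\partial_p Q_1)$, feed in the affine approximation $a=u(Z_Y)-G(Z_Y)\cdot z_Y$, $e=G(Z_Y)$ from \Cref{lem:bd_c1a} to get the pointwise bound $|Du(Y)-G(Z_Y)|\leq C\delta_Y^{\alpha}$, and conclude by the triangle inequality together with the $C^{\alpha}$-continuity of $G$. The only cosmetic differences are your choice of a single common exponent and of the full distance $\delta_Y$ rather than the paper's $R=\delta_Y/2$, neither of which affects the argument.
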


\begin{proof}
First we consider the case $X_0 \in \partial_s Q_1$. For any fixed $Y \in Q_1 \cap Q_r(X_0) $ and $r \in (0,1/2)$, by \Cref{interiorholder}, we can see that there exists a constant $\alpha_1 \in (0,1)$ such that
\begin{equation*}  
	\sum_{k=1}^n\osc_{Q_r(Z)} D_k u \leq A \left(\frac{r}{\rho}\right)^{\alpha_1}  \sum_{k=1}^n \osc_{Q_\rho(Z)} D_k u 
\end{equation*}
whenever $r\leq \rho \leq \textnormal{dist}(Z,\partial_p Q_1)$, $Q_\rho (Z) \subset Q_R(Y)$, and $R= \text{dist}(Y,\partial_p Q_1) /2$. Hence by \Cref{lem:comb_int_bdry}, for any $a \in \mathbb{R}$ and $e \in \mathbb{R}^n$, we have
\begin{equation} \label{eq:sup_Du-e}
	\sup_{Q_{R/2}(Y)} |Du-e| \leq C R^{-1} \sup_{Q_R(Y)} |u-a- e\cdot x | .
\end{equation}

On the other hand, from \Cref{lem:bd_c1a},  there exist constants $\alpha_2 \in (0,1)$, $C>0$, and a vector-valued function $G=(g_1,\cdots,g_n) \in C^{\alpha_2}(\partial_p Q_1 \cap Q_{1/2}(\tilde{Y}), \mathbb{R}^n)$ such that 
\begin{equation*}
	|u(X) - u(\tilde{Y}) -G(\tilde{Y})\cdot(x-\tilde{y})| \leq Cd(X,\tilde{Y})^{1+\alpha_2} \quad \text{for all } X \in Q_1 \cap Q_1(\tilde{Y}), 
\end{equation*}
where $\tilde{Y} \in \partial_p Q_1$ with $d(Y,\tilde{Y}) =2R$. This implies that 
\begin{align*}
	 |u(X)- u(\tilde{Y}) - G(\tilde{Y})\cdot(x-\tilde{y}) | \leq Cd(X,\tilde{Y})^{1+\alpha_2} \leq CR^{1+\alpha_2}
\end{align*}
for all $X \in Q_R(Y)$. So if we take $a=u(\tilde{Y})- G(\tilde{Y})\cdot \tilde{y} $ and $e= G(\tilde{Y})$ in \eqref{eq:sup_Du-e}, we have
\begin{equation*} 
	 |Du(Y)-G(\tilde{Y})| \leq \sup_{Q_{R/2}(Y)} |Du-G(\tilde{Y})| \leq C d(Y,\tilde{Y})^{\alpha_2}.
\end{equation*}
Since $G=(g_1,\cdots,g_n) \in C^{\alpha_2}(\partial_p Q_1 \cap Q_{1/2}(\tilde{Y}), \mathbb{R}^n)$, we have
\begin{align*}
	D_k u(X) -D_k u(Y) & \leq |D_k u(X) -g_k(\tilde{X}) | +  |g_k (\tilde{X}) -g_k (\tilde{Y}) | +  |g_k (\tilde{Y}) -D_k u(Y) | \\
	& \leq Cd(X,\tilde{X}) ^{\alpha_2} + Cd(\tilde{X},\tilde{Y})^{\alpha_2} + C d(Y,\tilde{Y}) ^{\alpha_2} \\
	& \leq Cr^{\alpha_2}
\end{align*}
for all $X, Y \in Q_1 \cap Q_r(X_0)$ and $k=1,\cdots,n$ and hence we conclude that
\begin{equation*}
	\osc_{Q_1 \cap Q_r(X_0)} D_k u \leq C r^{\alpha_2}
\end{equation*}
for all $r \in (0,1/2)$ and $k=1,\cdots,n$. The case $X_0 \in \partial_b Q_1 \cup \partial_c Q_1$ can be proved in the same way; see \cite{Lie96} for details.
\end{proof}
Finally, by combining \Cref{interiorholder} and \Cref{lem:bd_c1a_n}, we have the following global results.
\begin{lemma}[Global a priori H\"older estimate for the gradient] \label{lem:gbd_c1a}
Suppose that  $\gamma >-2$, and $F$ satisfies \ref{F1} and \ref{F3}. Let $\beta \in (1/2, 1)$ and let $u \in C^{2, \beta}(\overline{Q_1})$ be a solution \eqref{eq:mean} with $\varphi \in C^2(\overline{Q_1})$. Then there exists constant $\alpha \in (0,1)$ depending only on $n$, $\lambda$, $\Lambda$, $\gamma$, $\|u\|_{L^{\infty}(Q_1)}$, and $\|\varphi\|_{C^2(\overline{Q_1})}$ such that 
\begin{align*}
	[Du]_{C^\alpha(\overline{Q_1})} \leq C,
\end{align*}
where $C$ is a constant depending only on $n$, $\lambda$, $\Lambda$, $\gamma$, $\|u\|_{L^{\infty}(Q_1)}$, and $\|\varphi\|_{C^{2}(\overline{Q_1})}$.
\end{lemma}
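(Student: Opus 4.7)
The plan is to combine the interior oscillation decay from \Cref{interiorholder} with the boundary oscillation estimate from \Cref{lem:bd_c1a_n} via a standard case split based on the parabolic distance to $\partial_p Q_1$. First, I would invoke \Cref{lem:max_prin}, \Cref{lem:bdry_Du} (where the hypothesis $\gamma>-2$ enters in an essential way), and \Cref{globalgradient} in sequence to obtain a uniform bound $|u|+|Du|\leq K$ on $\overline{Q_1}$, with $K$ depending only on $n,\lambda,\Lambda,\gamma,\|u\|_{L^\infty(Q_1)}$, and $\|\varphi\|_{C^2(\overline{Q_1})}$. This global gradient bound is what allows us to activate both of the subsequent Hölder estimates with constants controlled by the permitted list. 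Denoting by $\alpha_1\in(0,1)$ the exponent from \Cref{interiorholder} and by $\alpha_2\in(0,1)$ the one from \Cref{lem:bd_c1a_n}, set $\alpha\coloneqq\min(\alpha_1,\alpha_2)$.

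For any $X,Y\in\overline{Q_1}$, write $r\coloneqq d(X,Y)$ and, without loss of generality, $d_X\coloneqq\textnormal{dist}(X,\partial_p Q_1)\leq d_Y$. The regime $r\gtrsim 1$ is trivial from the global bound $|Du|\leq K$, so I restrict to $r\leq 1/4$. If $d_X\leq r$, I would pick $X_0\in\partial_p Q_1$ realizing $d(X,X_0)=d_X$; then $d(Y,X_0)\leq 2r$, so both $X$ and $Y$ lie in $Q_1\cap Q_{2r}(X_0)$, and \Cref{lem:bd_c1a_n} yields directly
\[
|D_k u(X)-D_k u(Y)| \leq \osc_{Q_1\cap Q_{2r}(X_0)} D_k u \leq C(2r)^{\alpha_2}\leq Cr^{\alpha}
\]
for each $k$. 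If instead $d_X>r$, let $X_0\in\partial_pQ_1$ again realize $d(X,X_0)=d_X$. The iterated oscillation inequality of \Cref{interiorholder} applied at $Z=X$ with radii $r\leq\rho=d_X\leq\textnormal{dist}(X,\partial_pQ_1)$ gives
\[
\sum_k\osc_{Q_r(X)} D_k u \leq C\bigl(r/d_X\bigr)^{\alpha_1}\sum_k\osc_{Q_{d_X}(X)} D_k u,
\]
while the boundary estimate \Cref{lem:bd_c1a_n} applied at $X_0$ yields $\osc_{Q_{d_X}(X)}D_k u\leq \osc_{Q_1\cap Q_{2d_X}(X_0)}D_k u\leq C d_X^{\alpha_2}\leq Cd_X^{\alpha}$. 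The two factors of $d_X^{\alpha}$ cancel and, since $Y\in Q_r(X)$, we conclude $|D_k u(X)-D_k u(Y)|\leq C r^{\alpha}$. Combining both cases and summing over $k$ gives $[Du]_{C^{\alpha}(\overline{Q_1})}\leq C$.

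Since both the interior and the boundary Hölder estimates are already established in \Cref{interiorholder} and \Cref{lem:bd_c1a_n}, this last step is essentially a bookkeeping argument; I do not anticipate any genuine obstacle. The only mild technical care is in the geometric containment $Q_{d_X}(X)\subset Q_1\cap Q_{2d_X}(X_0)$ used in the interior case, whose verification depends on whether $X_0$ lies on $\partial_b Q_1$, $\partial_c Q_1$, or $\partial_s Q_1$, since parabolic cylinders are one-sided in time. Each configuration can be checked directly and introduces at most a universal factor in the radius, which is absorbed into $C$.
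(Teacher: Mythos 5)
Your proposal is correct and is essentially the paper's own argument: the paper proves \Cref{lem:gbd_c1a} simply by asserting that it follows from combining \Cref{interiorholder} with \Cref{lem:bd_c1a_n}, and your case split on $\textnormal{dist}(X,\partial_p Q_1)$ versus $d(X,Y)$ (with the global gradient bound from \Cref{lem:max_prin}, \Cref{lem:bdry_Du}, and \Cref{globalgradient} feeding the constants) is the standard way to carry out that combination. The one-sided-in-time cylinder issue and the radius restriction $r<1/2$ in \Cref{lem:bd_c1a_n} that you flag are indeed only bookkeeping.
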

%
%
\section{$C^{1, \alpha}$-regularity via approximations}\label{sec:approx}
In this section, we consider an approximated solution $u^{\varepsilon}$, $\varepsilon>0$, of a regularized equation which will be specified soon. We first regularize the operator $F$ by standard mollification technique. To be precise, we extend the domain of $F$ from $\mathcal{S}^n$ to $\mathbb{R}^{n^2}$ by considering $F(M)=F\left(\frac{M+M^T}{2}\right)$. We also let $\psi \in C_c^{\infty}(\mathbb{R}^{n^2})$ be a standard mollifier satisfying $\int_{\mathbb{R}^{n^2}}\psi \,dM=1$ and $\mathrm{supp} \,\psi  \subset \{M \in \mathbb{R}^{n^2} : \sum_{i,j=1}^n M_{ij}^2 \leq 1\}$, and define $\psi_{\varepsilon}(M)=\varepsilon^{-n^2}\psi(M/\varepsilon)$. If we define $F^{\varepsilon}$ as
\begin{align*}
	F^{\varepsilon}(M) \coloneqq F \ast \psi_{\varepsilon}(M)=\int_{\mathbb{R}^{n^2}} F(M-N) \psi_{\varepsilon}(N)\,dN.
\end{align*}
It is easy to check that $F^{\varepsilon}$ is uniformly elliptic (with the same ellipticity constants $\lambda, \Lambda$) and smooth. Moreover, $F^{\varepsilon}$ is convex whenever $F$ is convex. Finally, since $F$ is Lipschitz continuous, $F^{\varepsilon}$ converges to $F$ uniformly.

We now consider the following regularized problem:
\begin{equation}\label{eq:reg}
	\partial_tu^{\varepsilon}= (\varepsilon^2+|Du^{\varepsilon}|^2 )^{\gamma/2}F^{\varepsilon}(D^2u^{\varepsilon}).
\end{equation}

In the remaining of this section, we suppose that $F$ satisfies the hypotheses \ref{F1}, \ref{F2}, and \ref{F3}. We first derive uniform $C^{1, \alpha}$-estimates of $u^{\varepsilon}$ for some universal constant $\alpha \in (0,1)$, and then develop $C^{1, \alpha}$-estimate of a viscosity solution $u$ of \eqref{eq:model}, which finishes the proof of \Cref{thm:C1alpha}. For simplicity, we may write $u$ and $F$ instead of $u^{\varepsilon}$ and $F^{\varepsilon}$, if there is no confusion.

\subsection{Uniform Lipschitz estimates}
We begin with uniform log-Lipschitz estimates, whose proof is based on the Ishii-Lions' method \cite{IL90}. It is noteworthy that for \Cref{loglipschitz}, \Cref{lipschitz}, and \Cref{half_hol_t}, both \ref{F2} and \ref{F3} on $F$ are not necessary.
\begin{lemma}[Uniform log-Lipschitz estimate]\label{loglipschitz}
	Let $u$ be a viscosity solution of \eqref{eq:reg} in $Q_1$ with $\varepsilon \in (0,1)$ and  $\gamma >-2$. Then there exist two positive constants $L_1$ and $L_2$ depending only on $n$, $\lambda$, $\Lambda$, $\gamma$, and $\|u\|_{L^{\infty}(Q_1)}$ such that for every $(x_0, t_0) \in Q_{3/4}$, we have
	\begin{align*}
		u(x,t)-u(y,t) \leq L_1|x-y| |\log|x-y||+\frac{L_2}{2}|x-x_0|^2+\frac{L_2}{2}|y-x_0|^2+\frac{L_2}{2}(t-t_0)^2
	\end{align*}
	for all $t \in [t_0-1, t_0]$ and $x, y \in B_{3/4}(x_0)$.
\end{lemma}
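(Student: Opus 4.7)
The plan is to run the Ishii--Lions doubling-of-variables argument. For $\varepsilon > 0$ the coefficient $(\varepsilon^2 + |p|^2)^{\gamma/2}$ in \eqref{eq:reg} is uniformly bounded away from zero on compact sets of $p$-space, which allows the viscosity inequalities to be tested at critical points of doubled functions without any ambiguity at $p = 0$. Fix $(x_0, t_0) \in Q_{3/4}$ and parameters $L_1, L_2 > 0$ to be chosen, and consider
\begin{equation*}
\Phi(x, y, t) := u(x,t) - u(y,t) - L_1 \omega(|x-y|) - \tfrac{L_2}{2}|x - x_0|^2 - \tfrac{L_2}{2}|y - x_0|^2 - \tfrac{L_2}{2}(t - t_0)^2
\end{equation*}
on $\overline{B_{3/4}(x_0)} \times \overline{B_{3/4}(x_0)} \times [t_0 - 1, t_0]$, where $\omega \in C^2((0, \infty))$ is concave and increasing with $\omega(r) = r|\log r|$ on $(0, r_0]$ and $\omega$ extended smoothly to a bounded concave function on $[r_0, \infty)$ for some small absolute constant $r_0$. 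Choosing $L_2$ large depending on $\|u\|_{L^{\infty}(Q_1)}$ forces any positive maximum of $\Phi$ to be attained in the interior, and choosing $L_1$ large further forces $r := |\bar x - \bar y|$ at such a maximum to lie in $(0, r_0)$, where $\omega'(r) = -\log r - 1$ and $\omega''(r) = -1/r$ are explicit.

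I would argue by contradiction. Assuming a positive maximum at an interior point $(\bar x, \bar y, \bar t)$ with $\bar x \neq \bar y$, the parabolic Crandall--Ishii lemma yields jets $(a_1, p_x, X) \in \overline{\mathcal{P}}^{2,+} u(\bar x, \bar t)$ and $(a_2, p_y, Y) \in \overline{\mathcal{P}}^{2,-} u(\bar y, \bar t)$ satisfying
\begin{equation*}
a_1 - a_2 = L_2(\bar t - t_0), \quad p_x = L_1 \omega'(r)q + L_2(\bar x - x_0), \quad p_y = L_1 \omega'(r)q - L_2(\bar y - x_0),
\end{equation*}
where $q = (\bar x - \bar y)/r$, together with matrices $X, Y \in \mathcal{S}^n$ obeying the usual two-sided matrix inequality in terms of the Hessian $\mathcal{A}$ of the penalty. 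Testing that inequality against $(\xi, \xi)$ for arbitrary $\xi$ yields $X - Y \leq CL_2 I$, while testing against $(q, -q)$ yields $\langle (X - Y)q, q \rangle \leq -4L_1/r + CL_2$ (up to a vanishing $\iota$-correction). Consequently, for $L_1$ large enough relative to $L_2$ and $r < r_0$,
\begin{equation*}
\mathcal{M}^+_{\lambda, \Lambda}(X - Y) \leq -\frac{3\lambda L_1}{r} + C(n, \lambda, \Lambda) L_2.
\end{equation*}

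Feeding the jets into the sub/supersolution inequalities for \eqref{eq:reg} and subtracting produces
\begin{equation*}
L_2(\bar t - t_0) \leq (\varepsilon^2 + |p_x|^2)^{\gamma/2} F^\varepsilon(X) - (\varepsilon^2 + |p_y|^2)^{\gamma/2} F^\varepsilon(Y).
\end{equation*}
Since $|p_x - p_y| \leq CL_2$ while $|p_x|, |p_y| \geq cL_1|\log r|$ for $r < r_0$, the two weights are comparable with relative error $O(L_2/(L_1|\log r|))$. Splitting the right-hand side as
\begin{equation*}
(\varepsilon^2 + |p_x|^2)^{\gamma/2}[F^\varepsilon(X) - F^\varepsilon(Y)] + \bigl[(\varepsilon^2 + |p_x|^2)^{\gamma/2} - (\varepsilon^2 + |p_y|^2)^{\gamma/2}\bigr] F^\varepsilon(Y),
\end{equation*}
the first piece is bounded by $(\varepsilon^2 + |p_x|^2)^{\gamma/2} \mathcal{M}^+(X - Y)$. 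Using the lower bound on $\operatorname{diag}(X, -Y)$ from the matrix inequality to deduce $|F^\varepsilon(Y)| \leq C(L_1|\log r|/r + L_2)$, and the mean value theorem to bound the weight difference by $CL_2 (L_1|\log r|)^{\gamma - 1}$, the second piece is controlled by $CL_2 L_1^\gamma |\log r|^\gamma/r$, smaller than the first by a factor $L_1/L_2$.

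The main obstacle is to verify that the first piece really diverges to $-\infty$ as $r \to 0^+$ for every $\gamma \in (-2, \infty)$, which is precisely the reason for choosing the modulus $\omega(r) = r|\log r|$. One has $(\varepsilon^2 + |p_x|^2)^{\gamma/2} \geq c(L_1|\log r|)^\gamma$ when $\gamma < 0$ and $\geq 1$ when $\gamma \geq 0$, so the first piece is bounded above by $-cL_1^{1+\gamma}|\log r|^\gamma/r$. Since $r|\log r|^{|\gamma|} \to 0$ polynomially as $r \to 0^+$ for every finite $\gamma$, this quantity diverges to $-\infty$, while the error bound above differs by the tunable factor $L_1/L_2$. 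Taking $L_1$ sufficiently large relative to $L_2$, depending only on $n, \lambda, \Lambda, \gamma, \|u\|_{L^{\infty}(Q_1)}$, then makes the right-hand side of the subtracted viscosity inequality strictly less than $-L_2 \leq L_2(\bar t - t_0)$, the desired contradiction. This forces $\Phi \leq 0$ everywhere, which is the claimed log-Lipschitz estimate.
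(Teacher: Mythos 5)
Your proposal is correct and follows essentially the same Ishii--Lions argument as the paper's proof: the same modulus $r|\log r|$ with quadratic localization penalties, the parabolic Jensen--Ishii lemma, the matrix inequality tested on $(\xi,\xi)$ and $(q,-q)$, and the same splitting of the subtracted viscosity inequalities into a dominant negative ellipticity term and a gradient-weight error term controlled by the mean value theorem. The only point worth tightening is the final paragraph: the restriction $\gamma>-2$ does not come from the divergence of $|\log r|^{\gamma}/r$ as $r\to 0^{+}$ (which holds for every finite $\gamma$), but from the need for the dominant term $L_1^{1+\gamma}|\log r|^{\gamma}/r$ to beat $L_2$ uniformly over all admissible $r$, which via the constraint $r|\log r|\lesssim \|u\|_{L^{\infty}(Q_1)}/L_1$ reduces to $L_1^{2+\gamma}(\log L_1)^{1+\gamma}\to\infty$, exactly requiring $2+\gamma>0$.
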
 

\begin{proof}
	Without loss of generality, we may assume $(x_0, t_0)=(0,0)$. We claim that
	\begin{align}\label{claim}
		m\coloneqq\max_{\substack{x, y \in \overline{B_{3/4}}  \\ t \in [-1, 0] }} \left[u(x, t)-u(y,t)-L_1\phi(|x-y|)-\frac{L_2}{2}|x|^2-\frac{L_2}{2}|y|^2-\frac{L_2}{2}t^2 \right] \leq 0,
	\end{align}
where
\begin{align*}
	\phi(r)\coloneqq
	\begin{cases}
		-r \log r \quad &\text{for $r \in [0, e^{-1}]$}\\
		e^{-1} \quad &\text{for $r \geq e^{-1}$}.
	\end{cases}
\end{align*}
We prove \eqref{claim}  by contradiction: suppose that the positive maximum $m$ is attained at $t \in [-1, 0]$ and $x, y \in \overline{B_{3/4}}$. It immediately follows that $x \neq y$ and
\begin{align*}
	L_1\phi(|x-y|)+\frac{L_2}{2}|x|^2+\frac{L_2}{2}|y|^2+\frac{L_2}{2}t^2 \leq 2\|u\|_{L^{\infty}(Q_1)}.
\end{align*}
In particular, $|x|^2+|y|^2+|t|^2 \leq 12 \|u\|_{L^{\infty}(Q_1)}/L_2$ and
\begin{align*}
	\phi(\theta)\leq \frac{2\|u\|_{L^{\infty}(Q_1)}}{L_1}, \quad \text{where $\theta=|a|$ and $a=x-y$}.
\end{align*}
We choose $L_2=L_2(\|u\|_{L^{\infty}(Q_1)})>0$ large enough to ensure $t \in (-1, 0]$ and $x, y \in B_{3/4}$. Moreover, by choosing $L_1$ sufficiently large so that $\theta  \in (0,e^{-1})$ is small enough, we obtain 
\begin{align}\label{theta}
	\phi(\theta) \geq 2\theta, \quad \phi'(\theta) \geq 1, \quad \text{and so} \quad \theta \leq \frac{\|u\|_{L^{\infty}(Q_1)}}{L_1}.
\end{align}
Furthermore, by applying the parabolic version of Jensen-Ishii's lemma \cite[Theorem 8.3]{CIL92}, we observe that, for every $\varepsilon>0$ sufficiently small, there exist $M_x, M_y \in \mathcal{S}^n$ such that
\begin{enumerate}[label=(\roman*)]
	\item $(a_x, p_x, M_x) \in \overline{\mathcal{P}}^{2, +}u(x, t) \quad \text{and} \quad (-a_y, p_y,  -M_y) \in \overline{\mathcal{P}}^{2, -}u(y, t)$;
	\item 
	$
		\begin{pmatrix}
			M_x & 0 \\
			0 & M_y
		\end{pmatrix}
		\leq L_1
		\begin{pmatrix}
			N & -N \\
			-N & N
		\end{pmatrix}
		+(2L_2+\varepsilon)
		\begin{pmatrix}
			I & 0 \\
			0 & I
		\end{pmatrix}
		;$
	\item $a_x+a_y=L_2t$,
\end{enumerate}
where
\begin{align*}
	p&\coloneqq L_1 \phi'(\theta)\hat{a}, \quad p_x \coloneqq p+L_2x, \quad p_y \coloneqq p-L_2y,\\
	N&\coloneqq\phi''(\theta) \hat{a} \otimes \hat{a}+\frac{\phi'(\theta)}{\theta} (I-\hat{a}\otimes \hat{a}) \quad \text{and} \quad \hat{a}\coloneqq\frac{a}{|a|}=\frac{x-y}{|x-y|}.
\end{align*}
It follows from (i), (iii), and \Cref{lem:semijet} that
\begin{align*}
	L_2t &\leq (\varepsilon^2+|p_x|^2)^{\gamma/2}F(M_x)-(\varepsilon^2+|p_y|^2)^{\gamma/2}F(-M_y) \\
	&= (\varepsilon^2+|p_y|^2)^{\gamma/2} (F(M_x)-F(-M_y) )+
	\left((\varepsilon^2+|p_x|^2)^{\gamma/2}-(\varepsilon^2+|p_y|^2)^{\gamma/2} \right) F(M_x)\\
	&\leq (\varepsilon^2+|p_y|^2)^{\gamma/2}\mathcal{M}_{\lambda, \Lambda}^+(M_x+M_y)+
	\left|(\varepsilon^2+|p_x|^2)^{\gamma/2}-(\varepsilon^2+|p_y|^2)^{\gamma/2} \right| \|M_x\|=:T_1+T_2. 
\end{align*}
Before we estimate two terms $T_1$ and $T_2$, we first provide appropriate $L^{\infty}$-bounds for $p$, $p_x$, $p_y$, $M_x$, and $M_y$. By choosing $L_1$ large enough, we may assume $|p| \gg L_2$ which implies that $|p|/2 \leq |p_x|, |p_y| \leq 2|p|.$
Since the matrix inequality (ii) yields that 
\begin{align*}
	M_x, M_y \leq \left(L_1 \frac{\phi'(\theta)}{\theta} +3L_2\right)I,
\end{align*}
we have 
\begin{align*}
	F(M_x) \geq (\varepsilon^2+|p_x|^2)^{-\gamma/2}L_2t + \left(\frac{\varepsilon^2+|p_y|^2}{\varepsilon^2+|p_x|^2}\right)^{\gamma/2} F(-M_y)\geq -C\left(|p|^{-\gamma}+L_1 \frac{\phi'(\theta)}{\theta}+1 \right),
\end{align*}
where $C$ is a constant depending only on $n$, $\lambda$, $\Lambda$, $\gamma$, and $\|u\|_{L^{\infty}(Q_1)}$. Therefore, we conclude that
\begin{align*}
	\|M_x\|, \|M_y\| \leq C\left(|p|^{-\gamma}+L_1 \frac{\phi'(\theta)}{\theta}+1 \right).
\end{align*}
We are now ready to estimate $T_1$ and $T_2$. For $T_2$, an application of mean value theorem gives
\begin{align*}
	T_2 \leq C|p|^{\gamma-1}\|M_x\| |x+y| \leq C\left(|p|^{-1}+ \frac{|p|^{\gamma}}{\theta}+|p|^{\gamma-1} \right).
\end{align*}
For $T_1$, we again employ the previous matrix inequality given in (ii). First, by evaluating a vector of the form $(\xi, \xi)$ for any $\xi \in \mathbb{R}^n$, we have
\begin{align*}
	(M_x+M_y)\xi \cdot \xi \leq 6L_2|\xi|^2,
\end{align*}
which implies that any eigenvalues of $M_x+M_y$ are less than $6L_2$. Next, by considering a special vector $(\hat{a}, -\hat{a})$, we arrive at
\begin{align*}
	(M_x+M_y)\hat{a} \cdot \hat{a} \leq 4L_1 \phi''(\theta)+6L_2.
\end{align*}
In other words, at least one eigenvalue of $M_x+M_y$ is less than $4L_1 \phi''(\theta)+6L_2$. Therefore, by the definition of the Pucci operator, we have
\begin{align*}
	\mathcal{M}_{\lambda, \Lambda}^+(M_x+M_y) \leq \lambda\left(4L_1 \phi''(\theta)+6L_2\right)+6\Lambda(n-1)L_2\leq C\left(L_1 \phi''(\theta)+1 \right).
\end{align*}
Hence, we obtain
\begin{align*}
	T_1 \leq C\left(L_1\phi''(\theta)+1 \right)|p|^{\gamma} .
\end{align*}
Combining two estimates for $T_1$ and $T_2$, it holds that
\begin{align}\label{finalest}
	-L_1\phi''(\theta) \leq C \left( |p|^{-\gamma-1}+\frac{1}{\theta}+|p|^{-1} +|p|^{-\gamma}+1\right).
\end{align}
By recalling that $\gamma>-2$, $\theta \leq C_0/L_1$, $L_1 \leq |p|=L_1|\phi'(\theta)| \leq -L_1\log\theta$, and $\phi''(\theta)=-1/\theta$, we can choose $L_1$ large enough so that
\begin{align*}
	C \left( |p|^{-\gamma-1}+|p|^{-1} +|p|^{-\gamma}+1\right) \leq \frac{1}{2\theta}L_1.
\end{align*}
It leads to the contradiction if we choose $L_1$ further large enough.
\end{proof}

\begin{lemma}[Uniform Lipschitz estimate]\label{lipschitz}
	Let $u$ be a viscosity solution of \eqref{eq:reg} in $Q_1$ with $\varepsilon \in (0,1)$ and $\gamma>-2$. Then there exist two positive constants $L_1$ and $L_2$ depending only on $n$, $\lambda$, $\Lambda$, $\gamma$, and $\|u\|_{L^{\infty}(Q_1)}$ such that for every $(x_0, t_0) \in Q_{3/4}$, we have
	\begin{align*}
		u(x,t)-u(y,t) \leq L_1|x-y|+\frac{L_2}{2}|x-x_0|^2+\frac{L_2}{2}|y-x_0|^2+\frac{L_2}{2}(t-t_0)^2
	\end{align*}
	for all $t \in [t_0-1, t_0]$ and $x, y \in B_{3/4}(x_0)$.
\end{lemma}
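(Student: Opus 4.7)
The plan is to repeat the Ishii-Lions doubling-of-variables scheme of Lemma~\ref{loglipschitz}, replacing the logarithmic penalty $-r\log r$ by a genuinely linear modulus with a slight concave correction: set
$$\phi(r) := r - \omega_0 r^{1+\tau}, \qquad \tau \in (0, \min(1, 2+\gamma)),$$
truncated smoothly to be constant beyond a small $r_0$, with $\omega_0 > 0$ chosen so small that $\phi$ is increasing and concave on $[0, r_0]$ and $\phi(r) \geq r/2$ there. This choice is dictated by two competing requirements: $\phi(r) \sim r$ so that the resulting bound is genuinely Lipschitz, and $\phi''(r) = -\omega_0 \tau(1+\tau) r^{\tau-1}$ must blow up at $0$ fast enough to make the upper-Pucci term strongly negative. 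I would then argue by contradiction, assuming that
$$m := \max_{\substack{x, y \in \overline{B_{3/4}} \\ t \in [-1, 0]}} \Bigl[ u(x,t) - u(y,t) - L_1\phi(|x-y|) - \tfrac{L_2}{2}|x|^2 - \tfrac{L_2}{2}|y|^2 - \tfrac{L_2}{2}t^2 \Bigr] > 0$$
for appropriate $L_1, L_2$. Positivity of $m$ gives $\phi(\theta) \leq 2\|u\|_{L^\infty(Q_1)}/L_1$, hence $\theta := |x-y| \leq C/L_1$, and also $|x+y| \leq C/\sqrt{L_2}$; these two quantitative gains are what drive the argument. An application of the parabolic Jensen-Ishii lemma (applicable since the regularized operator is continuous in $p$) yields semijets with $p = L_1\phi'(\theta)\hat{a}$, $|p| \sim L_1$, and matrices $M_x, M_y$ satisfying the usual matrix inequality (ii).

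The main estimates concern the viscosity inequality
$$L_2 t \leq (\varepsilon^2 + |p_y|^2)^{\gamma/2}\mathcal{M}^+(M_x+M_y) + \bigl|(\varepsilon^2 + |p_x|^2)^{\gamma/2} - (\varepsilon^2 + |p_y|^2)^{\gamma/2}\bigr|\|M_x\| =: T_1 + T_2.$$
Testing the matrix inequality against $(\hat{a}, -\hat{a})$ yields an eigenvalue of $M_x+M_y$ bounded above by $4L_1\phi''(\theta) + O(L_2) \sim -L_1^{2-\tau}$, while testing on $(\xi, \xi)$ bounds all other eigenvalues by $O(L_2)$. Since $L_1^{2-\tau} \gg L_2$ for $L_1$ large, this gives $\mathcal{M}^+(M_x+M_y) \leq 4\lambda L_1\phi''(\theta) + C(L_2+1)$. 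Using $(\varepsilon^2 + |p_y|^2)^{\gamma/2} \asymp L_1^{\gamma}$ uniformly in $\varepsilon \in (0, 1)$ (valid because $|p_y| \sim L_1 \gg 1 \geq \varepsilon$), one arrives at $T_1 \leq -cL_1^{\gamma+2-\tau} + CL_1^\gamma(L_2+1)$. For $T_2$, the mean-value theorem applied to $s \mapsto (\varepsilon^2 + s)^{\gamma/2}$ together with $|p_x - p_y| = L_2|x+y| \leq C\sqrt{L_2}$ and the bound $\|M_x\| \leq C(|p|^{-\gamma} + L_1\phi'(\theta)/\theta + 1) \leq CL_1^2$ (which holds since $\gamma > -2$) gives $T_2 \leq CL_1^{\gamma+1}\sqrt{L_2}$. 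Plugging into $L_2 t \leq T_1 + T_2$ (and using $|L_2 t| \leq L_2$), then dividing through by $L_1^{\gamma+2-\tau}$, one obtains
$$c \leq CL_1^{\tau - 1}\sqrt{L_2} + CL_1^{\tau - 2}(L_2+1) + L_2 L_1^{\tau-\gamma-2},$$
whose right-hand side tends to $0$ as $L_1 \to \infty$ since $\tau < 1$ and $\tau < 2+\gamma$, yielding the contradiction.

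The main obstacle is the singular regime $-2 < \gamma < 0$, where the factor $(\varepsilon^2 + |p|^2)^{\gamma/2} \sim L_1^\gamma$ is large and the gradient-variation term $T_2$ could a priori overwhelm the Pucci contribution. The essential new ingredient compared with Lemma~\ref{loglipschitz} is the exploitation of the $|x+y| \leq C/\sqrt{L_2}$ gain that was not used in the log-Lipschitz proof; this, together with the strict constraint $\tau < 2 + \gamma$ on the concavity exponent (which is nonempty for any $\gamma > -2$), is what makes the inequalities close. The order in which the constants are chosen is also delicate: first $\tau$ depending only on $\gamma$, then $\omega_0$ and $r_0$ fixing the geometry of $\phi$, then $L_2$ depending on $\|u\|_{L^\infty}$ to pin the maximizer in the interior, and finally $L_1$ sufficiently large to close off the contradiction uniformly in $\varepsilon$.
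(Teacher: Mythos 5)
Your approach diverges from the paper's at the decisive point: the paper's proof of \Cref{lipschitz} (deferring to \cite[Lemma 2.3]{IJS19}) explicitly feeds the already-established log-Lipschitz estimate of \Cref{loglipschitz} back into the second doubling argument, whereas you discard that input and try to close the argument with the penalization gain $|x+y|\le C/\sqrt{L_2}$ alone. This does not work, and the failure is concentrated in your bound on $\|M_x\|$. Exactly as in the proof of \Cref{loglipschitz}, the matrix inequality together with the equation yields
\[
\|M_x\| \le C\left(|p|^{-\gamma} + L_1\frac{\phi'(\theta)}{\theta} + 1\right),
\]
and with your $\phi$ the middle term is of order $L_1/\theta$. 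Your step ``$\le CL_1^2$'' therefore presupposes a lower bound $\theta\ge c/L_1$, but the only information available at the contact point is the \emph{upper} bound $\theta\le C/L_1$ (from $\phi(\theta)\le 2\|u\|_{L^\infty(Q_1)}/L_1$ and $\phi(r)\ge r/2$); nothing prevents $\theta$ from being arbitrarily small. Carrying the honest bound through gives $T_2 \le C\sqrt{L_2}\,|p|^{\gamma}/\theta + (\text{lower order})$, while the favorable term satisfies only $|T_1|\approx \omega_0 L_1^{\gamma+1}\theta^{\tau-1}$. The ratio of bad to good is of order $\sqrt{L_2}/(\omega_0 L_1\theta^{\tau})$, which blows up as $\theta\to0$ precisely because $\tau>0$: the concave correction $-\omega_0 r^{1+\tau}$ produces $|\phi''(\theta)|\sim\theta^{\tau-1}$, strictly weaker than the $1/\theta$ singularity coming from $\|M_x\|$, whereas the logarithmic $\phi$ of \Cref{loglipschitz} has $|\phi''(\theta)|=1/\theta$ and matches it exactly. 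So no contradiction is reached in the regime where the two contact points are very close, and the $|x+y|\le C/\sqrt{L_2}$ gain that you single out as the essential new ingredient is irrelevant to this obstruction --- it only improves the coefficient of the $1/\theta$ term from $L_2$ to $\sqrt{L_2}$, it does not remove the $1/\theta$.

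If your estimate $\|M_x\|\le CL_1^2$ were available, the rest of your bookkeeping would indeed close (the exponent count $\tau<\min(1,2+\gamma)$ is consistent), so the gap is localized but essential: as written, your argument proves the lemma only under the additional, unjustified hypothesis that the maximizing pair satisfies $|x-y|\gtrsim 1/L_1$. The missing idea is the one the paper names: first prove \Cref{loglipschitz}, then use the resulting modulus $L_1'\theta|\log\theta|$ (with a matching quadratic penalization constant) as a priori information on the maximizer of the Lipschitz-type doubling functional before estimating $T_1$ and $T_2$; this is how \cite[Lemma 2.3]{IJS19} upgrades log-Lipschitz to Lipschitz, and it is the step your proposal omits.
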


\begin{proof}
	The proof is similar to the one in \Cref{loglipschitz}, but the estimate is improved by exploiting the log-Lipschitz regularity of a viscosity solution; see \cite[Lemma 2.3]{IJS19} for details.
\end{proof}

By letting $t=t_0$ and $y=x_0$ in \Cref{lipschitz} and since $(x_0, t_0)$ is arbitrary, we derive the following Lipschitz estimate for a viscosity solution $u$ of \eqref{eq:reg}:
\begin{align*}
	|u(x, t)-u(y, t)| \leq C|x-y|,
\end{align*}
for every $(x, t), (y, t) \in Q_{3/4}$ with $|x-y| <1$. Here $C>0$ is a constant depending only on $n$, $\lambda$, $\Lambda$, $\gamma$, and $\|u\|_{L^{\infty}(Q_1)}$ and $C$ does not depend on $\varepsilon>0$.

We end this subsection with uniform H\"older estimates in the time variable. The proof relies on the interplay between the regularity in time and space, by applying comparsion principle.
\begin{lemma}[Uniform H\"older estimates in $t$]\label{half_hol_t}
	Let $u$ be a viscosity solution of \eqref{eq:reg} in $Q_1$ with $\varepsilon \in (0,1)$ and $\gamma>-1$. Then there exists a constant $C>0$ depending only on $n$, $\lambda$, $\Lambda$, $\gamma$, and $\|u\|_{L^{\infty}(Q_1)}$ such that
	\begin{align*}
		\sup_{\substack{ (x, t), (x, s) \in Q_{3/4} \\ t \neq s}} \frac{|u(x, t)-u(x,s)|}{|t-s|^{1/2}} \leq C.
	\end{align*}
\end{lemma}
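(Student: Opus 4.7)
The plan is to deduce the $1/2$-Hölder regularity in time from the spatial Lipschitz bound already established in Lemma~\ref{lipschitz} by constructing explicit barriers and invoking the comparison principle. Let $L = L(n, \lambda, \Lambda, \gamma, \|u\|_{L^\infty(Q_1)})$ denote the Lipschitz constant in $x$ furnished by that lemma. Fix $(x_0, t_0) \in Q_{3/4}$ and a second time $s$; set $\tau := |t_0 - s|$, and without loss of generality assume $s < t_0$. For $\tau$ greater than a threshold $\tau_0$ depending only on the universal data, the desired bound is immediate from $|u(x_0, t_0) - u(x_0, s)| \leq 2\|u\|_{L^\infty(Q_1)} \leq (2\|u\|_{L^\infty(Q_1)}/\sqrt{\tau_0})\sqrt{\tau}$, so I may restrict attention to $\tau \leq \tau_0$.

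For the upper bound I would compare $u$ on the cylinder $Q_* = B_R(x_0) \times [s, t_0]$, with $R$ a fixed radius of order $\|u\|_{L^\infty(Q_1)}/L$, against a supersolution of the form
\begin{equation*}
	\phi^+(x, t) = u(x_0, s) + A\sqrt{\tau} + \frac{B|x-x_0|^2}{\sqrt{\tau}} + C_0(t-s),
\end{equation*}
where $A, B, C_0$ are to be fixed in terms of universal data only. The bottom inequality $\phi^+(\cdot, s) \geq u(\cdot, s)$ follows from the spatial Lipschitz estimate together with AM-GM provided $4AB \geq L^2$, and the side inequality on $\{|x-x_0|=R\}$ reduces to $BR^2/\sqrt{\tau} \geq 2\|u\|_{L^\infty(Q_1)}$, which is automatic for $\tau \leq \tau_0$. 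A direct computation of $\phi^+_t = C_0$, $D\phi^+ = 2B(x-x_0)/\sqrt{\tau}$, and $D^2\phi^+ = (2B/\sqrt{\tau})I$, combined with the Pucci bound $F^\varepsilon(D^2\phi^+) \leq 2n\Lambda B/\sqrt{\tau}$, reduces the supersolution inequality to $C_0 \geq (\varepsilon^2 + |D\phi^+|^2)^{\gamma/2} \cdot 2n\Lambda B/\sqrt{\tau}$. Once this is verified, Theorem~\ref{thm:comp1} (applied to the operator $G(x,t,r,p,M) = (\varepsilon^2+|p|^2)^{\gamma/2}F^\varepsilon(M)$, which is continuous, proper, and satisfies \ref{H} since $F^\varepsilon$ is Lipschitz) yields $u \leq \phi^+$ in $Q_*$, whence
\begin{equation*}
	u(x_0, t_0) - u(x_0, s) \leq \phi^+(x_0, t_0) - u(x_0, s) = A\sqrt{\tau} + C_0 \tau \leq C\sqrt{\tau}.
\end{equation*}
A symmetric subsolution $\phi^-$ gives the matching lower bound, and taking the supremum over $(x_0, t_0), (x_0, s) \in Q_{3/4}$ delivers the claim.

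The crux of the argument, and the step I expect to be most delicate, is to fix the constants $A, B, C_0$ uniformly in $\varepsilon \in (0,1)$ and $\tau$. The obstruction is the factor $(\varepsilon^2 + |D\phi^+|^2)^{\gamma/2}$: in the degenerate range $\gamma > 0$ it grows like $(B/\sqrt{\tau})^\gamma$ near the lateral boundary of $Q_*$, while in the singular range $-1 < \gamma < 0$ it attains the $\varepsilon$-dependent value $\varepsilon^\gamma$ on the axis $\{x=x_0\}$, where $D\phi^+$ vanishes. I would handle both obstructions by localizing the comparison to an annular cylinder $\{c\sqrt{\tau} \leq |x-x_0| \leq R\}$ on which $|D\phi^+|$ stays comparable to a positive universal constant, and closing the argument inside the central ball of radius $c\sqrt{\tau}$ by means of the spatial Lipschitz bound $|u(x_0, t) - u(x, t)| \leq Lc\sqrt{\tau}$. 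The inner-boundary inequality this produces takes the form $(A + Bc^2 - Lc)\sqrt{\tau} + C_0(t-s) \geq 0$, and a standard bootstrap with respect to a time-modulus of $u(x_0, \cdot)$ shows that the hypothesis $\gamma > -1$ is exactly the threshold at which this balancing yields a constant $C$ depending only on the universal parameters.
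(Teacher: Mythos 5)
Your overall strategy --- spatial Lipschitz bound from \Cref{lipschitz}, explicit barriers, comparison via \Cref{thm:comp1} --- is indeed the route the paper intends (it defers to [IJS19, Lemma 3.1], whose barrier construction is essentially reproduced in this paper's \Cref{lem:osc1}). But the step you yourself flag as delicate is where the argument breaks, in both ranges of $\gamma$. For $\gamma>0$: restricting to the annulus $\{c\sqrt{\tau}\le|x-x_0|\le R\}$ with $R$ fixed does \emph{not} make $|D\phi^+|$ ``comparable to a positive universal constant'' --- near the outer boundary $|D\phi^+|=2B|x-x_0|/\sqrt{\tau}\sim R/\sqrt{\tau}\to\infty$, so the supersolution inequality forces $C_0\gtrsim\tau^{-(1+\gamma)/2}$ and hence $C_0\tau\gtrsim\tau^{(1-\gamma)/2}$, which is not $O(\sqrt{\tau})$; optimizing the paraboloid coefficient on a fixed-radius cylinder can never do better than $\tau^{1/(2+\gamma)}$. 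The outer radius must itself be taken of order $\sqrt{\tau}$ (equivalently, one rescales parabolically to a unit cylinder), and then the outer lateral boundary can no longer be handled by the $L^{\infty}$ bound; \Cref{lem:osc1} circumvents this by using the per-time-slice oscillation bound $\osc_{B_1}u(\cdot,t)\le A$ to force the touching point of $u$ and the barrier to be interior in $x$ (the paraboloid rises by $2A$ from center to boundary while $u$ oscillates by at most $A$ on each slice), and this device is missing from your write-up.

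For $-1<\gamma<0$ the annulus-plus-bootstrap closure is circular and cannot converge: the inner lateral boundary $\{|x-x_0|=c\sqrt{\tau}\}$ must be controlled for all $t\in[s,t_0]$, and via the Lipschitz bound this requires $u(x_0,t)-u(x_0,s)\le(A+Bc^2-Lc)\sqrt{\tau}+C_0(t-s)$, i.e.\ exactly the time modulus you are trying to prove. Feeding in the inductive hypothesis $|u(x_0,t)-u(x_0,s)|\le M\sqrt{t-s}$ forces $A\ge M^2/(4C_0\sqrt{\tau})+Lc-Bc^2$, and the output constant is then $M'=A+Bc^2+Lc+C_0\tau/\sqrt{\tau}\ge M+2Lc$ by AM--GM, strictly worse than $M$ for every admissible choice of $A$, $B$, $c$, $C_0$: the bootstrap has no gain. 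The correct repair is not an annulus but a different profile: replace $|x-x_0|^2$ by $|x-x_0|^{\beta}$ with $\beta=\frac{2+\gamma}{1+\gamma}>2$, for which $|D\phi|^{\gamma}|D^2\phi|\sim|x-x_0|^{\gamma(\beta-1)+\beta-2}$ stays bounded up to the center (this is exactly the barrier used in \Cref{lem:osc1}, and it is where the hypothesis $\gamma>-1$ genuinely enters). As written, neither case of your ``crux'' step goes through; with the rescaled cylinder and the $\beta$-profile the argument becomes the paper's.
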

\begin{proof}
	We refer to \cite[Lemma 3.1]{IJS19} or \cite[Lemma 3.3]{FZ23} for the proof. Since only small modification is necessary in the construction of a barrier function, we omit the proof.
\end{proof}
\subsection{Uniform $C^{1, \alpha}$-estimates}
Our aim in this subsection is to provide a uniform estimate of $u^{\varepsilon}$ in $C^{1, \alpha}(\overline{Q_{1/2}})$-norm for a universal constant $\alpha \in (0,1)$ which is independent of $\varepsilon>0$. For this purpose, we will show that $Du$ is H\"older continuous at the point $(0, 0)$ by using the dichotomy strategy employed in \cite{IJS19, JS17}. We point out that by the standard scaling argument with \Cref{lipschitz}, we may assume that $|Du| \leq 1$ in $Q_{3/4}$. For convenience, we further suppose that $|Du| \leq 1$ in $Q_1$, as we could repeat the previous procedure in a larger domain.

%

We first verify a version of `density theorem' by investigating the equation satisfied by $Du$. More precisely, we prove that if the the projection of $Du$ on the unit vector $e \in \mathbb{R}^n$ is away from $1$ in a positive portion of $Q_{1}$, then the behavior of $Du \cdot e$ will be restricted  in $Q_{\tau}$ for some $\tau>0$.
\begin{lemma} \label{lem:Du<1-d}
	Let $u$ be a smooth solution of \eqref{eq:reg} such that $\sup_{Q_1} |Du| \leq 1$. For every $1/2<l<1$ and $\mu>0$, there exist $\tau$, $\delta>0$ depending only on $n$, $\lambda$, $\Lambda$, $\gamma$, $\mu$, and $l$ such that for arbitrary unit vector $e \in \mathbb{R}^{n}$, if
	\begin{align*}
		|\{X \in Q_1 : Du(X) \cdot e \leq l \} >\mu|Q_1|,
	\end{align*}
	then
	\begin{align*}
		Du \cdot e < 1-\delta \quad \text{in $Q_{\tau}^{1-\delta}\coloneqq B_{\tau} \times (-(1-\delta)^{-\gamma}\tau^2, 0].$}
	\end{align*}
\end{lemma}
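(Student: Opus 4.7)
The strategy is to linearize the regularized equation and apply a weak Harnack inequality to the nonnegative function $v := 1 - Du \cdot e$.

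First, since both $u$ and $F^{\varepsilon}$ are smooth, I would differentiate \eqref{eq:reg} in the direction $e$. Setting $w := Du \cdot e$ and $A := (\varepsilon^2 + |Du|^2)^{\gamma/2}$, a direct computation gives
\begin{align*}
w_t = A\, F^{\varepsilon}_{ij}(D^2 u)\, D_{ij} w + \gamma A (\varepsilon^2 + |Du|^2)^{-1} F^{\varepsilon}(D^2 u)\, Du \cdot Dw,
\end{align*}
which is a linear parabolic equation with no zeroth-order term. Since $|Du|\le 1$, the function $v := 1 - w$ is nonnegative and solves the same linear equation.

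Second, the hypothesis translates to $|\{X \in Q_1 : v(X) \geq 1-l\}| > \mu |Q_1|$, supplying a positive $L^{p}$-norm lower bound for $v$ on $Q_1$. Applying the weak Harnack inequality for nonnegative supersolutions of uniformly parabolic linear equations (the same tool used in the proof of \Cref{interiorholder}) yields a constant $c_0 = c_0(n,\lambda,\Lambda,\gamma,\mu,l) > 0$ such that $v \geq c_0$ on an appropriate sub-cylinder of $Q_1$. Setting $\delta := c_0$ and choosing $\tau$ small enough that $Q_\tau^{1-\delta}$ sits inside that sub-cylinder would conclude the proof.

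Third, the specific target cylinder $Q_\tau^{1-\delta} = B_\tau \times (-(1-\delta)^{-\gamma}\tau^2, 0]$ reflects the natural parabolic scale of the equation in the regime $|Du| \approx 1-\delta$: the effective diffusion coefficient is $(1-\delta)^{\gamma}$, so time should be rescaled by $(1-\delta)^{-\gamma}$. After this rescaling the equation is essentially uniformly parabolic with unit-order coefficients, and $Q_\tau^{1-\delta}$ maps to a standard parabolic cylinder.

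The main obstacle is to guarantee that $\tau$ and $\delta$ depend only on $n,\lambda,\Lambda,\gamma,\mu,l$ and \emph{not} on $\varepsilon$. The coefficient $A$ can degenerate (for $\gamma > 0$) or become singular (for $\gamma < 0$) on the subset of $Q_1$ where $|Du|$ is small, so the linearized operator is not uniformly elliptic over the whole of $Q_1$; likewise, the drift coefficient contains $F^{\varepsilon}(D^2 u) = u_t/A$, which is not pointwise bounded uniformly in $\varepsilon$. The resolution is to divide the linearized equation by $A$: the spatial part then has ellipticity ratio $\Lambda/\lambda$ independent of $\varepsilon$, while the time rescaling by $(1-\delta)^{-\gamma}$ precisely captures the change of scale relevant to the regime of the conclusion. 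The drift can then be absorbed via an exponential change of variable (as in Step 3 of the proof of \Cref{globalgradient}), after which the standard Krylov–Safonov weak Harnack applies with $\varepsilon$-independent constants.
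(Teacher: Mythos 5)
There is a genuine gap: the auxiliary function $v = 1 - Du\cdot e$ is not adequate, and the two fixes you propose for the obstacles you (correctly) identify do not actually work. First, dividing the linearized equation by $A=(\varepsilon^2+|Du|^2)^{\gamma/2}$ does not restore uniform parabolicity: the resulting equation has the form $A^{-1}v_t = F_{ij}D_{ij}v+\beta\cdot Dv$, where the time-derivative weight $A^{-1}$ is a spatially varying function whose oscillation over $Q_1$ is of order $\varepsilon^{-|\gamma|}$ on the portion of the cylinder where $|Du|$ is small. A single global rescaling of time by $(1-\delta)^{-\gamma}$ cannot normalize a nonconstant weight, and the weak Harnack constants for such equations degenerate with $\sup A^{-1}/\inf A^{-1}$, hence with $\varepsilon$. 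Second, the drift $\beta = \gamma(\varepsilon^2+|Du|^2)^{-1}F^{\varepsilon}(D^2u)\,Du$ contains $F^{\varepsilon}(D^2u)$, which is comparable to $|D^2u|$ and for which no $\varepsilon$-uniform bound is available; the exponential change of variables of Step 3 of \Cref{globalgradient} absorbs a term $C|Dv|^2$ with \emph{bounded} $C$, not a linear drift with unbounded coefficient.

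The paper resolves both issues simultaneously by working with $w := (Du\cdot e - l + \rho|Du|^2)^+$, $\rho=l/4$, instead of $1-Du\cdot e$. The positive part localizes all computations to $\Omega_+=\{w>0\}$, on which $Du\cdot e > l-\rho$ forces $l/2<|Du|\le 1$, so that $A$ is bounded above and below by constants depending only on $l$ and $\gamma$ (uniformly in $\varepsilon$) and the operator is genuinely uniformly parabolic there; outside $\Omega_+$ the function $w$ vanishes and is trivially a subsolution, so no ellipticity is needed where $|Du|$ is small. Moreover, differentiating the $|Du|^2$ component produces the good term $-2A\,a_{ij}D_{ki}uD_{kj}u\le -2\lambda A|D^2u|^2$, which absorbs the unbounded drift via Cauchy--Schwarz and leaves only a $C|Dw|^2$ remainder with controlled $C$; only after this does the exponential substitution and the weak Harnack inequality (applied to $W-w$ with $W=1-l+\rho$) go through, and the choice $\rho = l/4$ then converts the resulting lower bound on $W-w$ into the stated bound $Du\cdot e<1-\delta$. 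Without the $\rho|Du|^2$ term and the positive part, your argument has no mechanism to control either the degeneracy of $A$ or the unbounded drift, so the weak Harnack step cannot be justified with $\varepsilon$-independent constants.
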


\begin{proof}
	We first define two quantities, namely, $v \coloneqq |Du|^2$ and $w \coloneqq (Du \cdot e -l+\rho|Du|^2)^+$ for $\rho=l/4$. We also let $\Omega_+ \coloneqq \{X \in Q_1 : w>0\}$. Indeed, we essentially follow the argument based on the Bernstein technique as in the proof of \Cref{globalgradient} and \Cref{interiorholder}. However, the situation becomes simpler, because we a priori assume that $u$ is smooth in this case.
	
	By differentiating \eqref{eq:reg} with respect to $x_k$, we have
	\begin{align*}
		\partial_t u_k =(\varepsilon^2+|Du|^2 )^{\gamma/2} a_{ij}D_{ijk} u +\gamma (\varepsilon^2+|Du|^2 )^{\gamma/2-1}F(D^2u) D_m u D_{mk}u,
	\end{align*}
	where $a_{ij} \coloneqq F_{ij}(D^2u)$. Then we obtain
	\begin{align*}
		(Du \cdot e-l)_t& =(\varepsilon^2+|Du|^2 )^{\gamma/2} a_{ij}D_{ij}(Du \cdot e-l) \\
		&\qquad+\gamma (\varepsilon^2+|Du|^2 )^{\gamma/2-1}F(D^2u)D_m u D_m(Du \cdot e-l)
	\end{align*}
	and
	\begin{align*}
		v_t&=(\varepsilon^2+|Du|^2 )^{\gamma/2} a_{ij} D_{ij}v+\gamma (\varepsilon^2+|Du|^2 )^{\gamma/2-1}F(D^2u) D_m u D_m v \\
		&\qquad\qquad\qquad\qquad\qquad\qquad\qquad\quad-2(\varepsilon^2+|Du|^2 )^{\gamma/2} a_{ij} D_{ki}u D_{kj}u.
	\end{align*}
	Thus, in $\Omega_+$, we deduce that
	\begin{align*}
		w_t&=(\varepsilon^2+|Du|^2 )^{\gamma/2} a_{ij}D_{ij}w+\gamma (\varepsilon^2+|Du|^2 )^{\gamma/2-1}F(D^2u)D_mu D_m w\\
		&\qquad\qquad\qquad\qquad\qquad\qquad\qquad\quad-2(\varepsilon^2+|Du|^2 )^{\gamma/2} a_{ij} D_{ki}u D_{kj}u .
	\end{align*}
	Since $l/2< |Du| \leq 1$ in $\Omega_+$ and $\lambda|\xi|^2 \leq a_{ij}\xi_i\xi_j=F_{ij}(D^2u)\xi_i\xi_j \leq \Lambda|\xi|^2$, it follows that
	\begin{align*}
		w_t \leq \mathcal{M}_{\lambda', \Lambda'}^+(D^2w)+C|Dw|^2 \quad \text{in $\Omega_+$},
	\end{align*}
	where $\lambda'$, $\Lambda'$, $C>0$ depend only on $n$, $\lambda$, $\Lambda$, and $\gamma$. Therefore, $w$ is a viscosity solution of the same inequality in $Q_1$. 
	
	We now choose a constant $c_1>0$ which depend only on $n$, $\lambda$, $\Lambda$, and $\gamma$ such that if we let 
	\begin{align*}
		W \coloneqq 1-l+\rho \quad \text{and} \quad \overline{w} \coloneqq \frac{1}{c_1}\left(1-e^{c_1(w-W)}\right),
	\end{align*}
	then $\overline{w}$ satisfies 
	\begin{align*}
		\overline{w}_t \geq \mathcal{M}^-_{\lambda', \Lambda'}(D^2\overline{w}) \quad \text{in $Q_1$}
	\end{align*}
	in the viscosity sense. Therefore, we arrive at the desired conclusion by applying the weak Harnack inequality for the nonnegative supersolution $\overline{w}$; see \cite{FZ23, IJS19} for details.
\end{proof}

In the rest of the paper, we take $\tau$ small enough to be 
\begin{equation} \label{cond:tau}
	\tau < \min\{ 1-\delta, (1-\delta)^{1+\gamma} )\}.
\end{equation}

If the assumption of \Cref{lem:Du<1-d} holds for all directions $e \in \mathbb{R}^d$ with $|e|=1$, then we can control the oscillation of $Du$ in a smaller parabolic cylinder. The following corollary describes the nice behavior of $Du$ when we can apply \Cref{lem:Du<1-d} in an iterative way.
\begin{corollary}  \label{cor:Du<1-d}
	Let $u$ be a smooth solution of \eqref{eq:reg} such that $\sup_{Q_1} |Du| \leq 1$. For every $1/2<l<1$ and $\mu>0$, there exist $\tau$, $\delta>0$ depending only on $n$, $\lambda$, $\Lambda$, $\gamma$, $\mu$, and $l$ such that, for every nonnegative integer $k \leq \log\varepsilon/\log(1-\delta)$, if
	\begin{equation} \label{inq:ith_step} 
		|\{X \in Q_{\tau^i}^{(1-\delta)^i}: Du(X) \cdot e \leq l (1-\delta)^i \}|> \mu |Q_{\tau^i}^{(1-\delta)^i}| 
	\end{equation}
	for all unit vector $e \in \mathbb{R}^n$ and $i=0,\cdots,k$, then for all $i=0,\cdots,k$, we have
	\begin{equation}\label{eq:cor}
		|Du| < (1-\delta)^{i+1} \quad \text{in $Q_{\tau^{i+1}}^{(1-\delta)^{i+1}}$}.
	\end{equation}
\end{corollary}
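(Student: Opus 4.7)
My plan is to run a finite induction on $i$, with the inductive step reducing, via a parabolic rescaling, to a single application of \Cref{lem:Du<1-d}.

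For the rescaling I would set
$$
u_i(x,t)\;\coloneqq\;\frac{u\bigl(\tau^i x,\,(1-\delta)^{-i\gamma}\tau^{2i}\,t\bigr)}{(1-\delta)^i\,\tau^i},
$$
and verify by a direct chain-rule computation that $u_i$ is a smooth solution in $Q_1$ of
$$
\partial_t u_i \;=\;\bigl(\varepsilon_i^{\,2}+|Du_i|^2\bigr)^{\gamma/2}\,F_i(D^2u_i),\qquad \varepsilon_i=\varepsilon(1-\delta)^{-i},
$$
where $F_i(M):=\tau^i(1-\delta)^{-i}F\bigl((1-\delta)^i\tau^{-i}M\bigr)$ inherits the ellipticity constants, convexity, and $C^{1,1}$-regularity of $F=F^\varepsilon$. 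The bound $i\le k\le \log\varepsilon/\log(1-\delta)$ is precisely what forces $\varepsilon_i\le 1$, so that $u_i$ is an admissible solution of the same kind of regularized equation to which \Cref{lem:Du<1-d} applies. Moreover the map $(x',t')\mapsto\bigl(\tau^i x',\,(1-\delta)^{-i\gamma}\tau^{2i}\,t'\bigr)$ is a bijection from $Q_1$ onto $Q_{\tau^i}^{(1-\delta)^i}$ with constant Jacobian, so densities and Lipschitz bounds transport cleanly: $Du(X)=(1-\delta)^i\,Du_i(X')$.

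Granted this setup, the induction is straightforward. At $i=0$ the hypothesis \eqref{inq:ith_step}, applied for every unit vector $e$ and combined with \Cref{lem:Du<1-d}, yields $Du\cdot e<1-\delta$ on $Q_\tau^{1-\delta}$ for every $e$, hence $|Du|<1-\delta$ on $Q_\tau^{1-\delta}$. Assuming \eqref{eq:cor} up to step $i-1$, the inductive bound $|Du|<(1-\delta)^i$ on $Q_{\tau^i}^{(1-\delta)^i}$ gives $|Du_i|\le 1$ on $Q_1$, while \eqref{inq:ith_step} transforms into the density hypothesis of \Cref{lem:Du<1-d} for $u_i$ with the same $l$ and $\mu$, namely
$$
\bigl|\{X'\in Q_1 : Du_i(X')\cdot e\le l\}\bigr|>\mu|Q_1| \quad \text{for every unit } e.
$$
\Cref{lem:Du<1-d} applied to $u_i$ (for each $e$) then yields $|Du_i|<1-\delta$ on $Q_\tau^{1-\delta}$, and undoing the rescaling produces $|Du|<(1-\delta)^{i+1}$ on the image cylinder, which is exactly $Q_{\tau^{i+1}}^{(1-\delta)^{i+1}}$.

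Two points deserve care. First, the nesting $Q_{\tau^{i+1}}^{(1-\delta)^{i+1}}\subset Q_{\tau^i}^{(1-\delta)^i}$ must hold so that the inductive bound on $|Du|$ is available where it is needed; for the time heights this reduces to $\tau^2\le (1-\delta)^{\gamma}$, which (together with $\tau<1-\delta$ in space) is precisely ensured by the choice \eqref{cond:tau} of $\tau$, uniformly for $\gamma>-2$. Second, the chain-rule computation must produce the equation above with the correct new parameters: the spatial normalization factor $(1-\delta)^i\tau^i$ is forced on us by the requirement $|Du_i|\le 1$, and the time factor $(1-\delta)^{-i\gamma}\tau^{2i}$ is then dictated by the need for the coefficient $(\varepsilon^2+|Du|^2)^{\gamma/2}$ to rescale so as to keep the equation invariant (up to absorbing $(1-\delta)^{i\gamma}$ into the time derivative). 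I expect this bookkeeping of the scaling factors — and its interaction with the range of $\gamma$ through \eqref{cond:tau} — to be the only nontrivial piece; every other step is a bijective transcription of the hypotheses and conclusion of \Cref{lem:Du<1-d}.
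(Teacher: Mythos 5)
Your proposal is correct and follows essentially the same route as the paper: a finite induction in which the rescaling $\widetilde{u}(x,t)=\tau^{-i}(1-\delta)^{-i}\,u(\tau^{i}x,\tau^{2i}(1-\delta)^{-i\gamma}t)$ turns the inductive hypothesis into the hypotheses of \Cref{lem:Du<1-d} (with $\varepsilon_i=\varepsilon(1-\delta)^{-i}\le 1$ guaranteed by $k\le\log\varepsilon/\log(1-\delta)$), and scaling back gives \eqref{eq:cor}. Your additional bookkeeping on the cylinder nesting via \eqref{cond:tau} is consistent with what the paper leaves implicit.
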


\begin{proof}
	We prove by induction. For $i=0$, \Cref{lem:Du<1-d} yields that $Du \cdot e <1-\delta$ in $Q_{\tau}^{1-\delta}$ for all unit vector $e \in \mathbb{R}^n$.
	
	Suppose that \eqref{eq:cor} holds for $i=0, ...,k-1$. If we let 
	\begin{align*}
		\widetilde{u}(x, t) \coloneqq \frac{1}{\tau^k(1-\delta)^k}u(\tau^kx, \tau^{2k}(1-\delta)^{-k\gamma}t),
	\end{align*}
	then $\widetilde{u}$ satisfies
	\begin{align*}
		\widetilde{u}_t =(|D\widetilde{u}|^2+\varepsilon^2 (1-\delta)^{-2k})^{\gamma/2} \widetilde{F}(D^2\widetilde{u}) \quad \text{in $Q_1$},
	\end{align*}
	where
	\begin{align*}
		\widetilde{F}(M)\coloneqq\frac{\tau^k}{(1-\delta)^k} F\left(\frac{(1-\delta)^k}{\tau^k}M\right) \ \text{whose ellipticity constants are the same as $F$.}
	\end{align*}
	We note that $\varepsilon \leq (1-\delta)^k$. Moreover, by the induction hypothesis, we observe that $|D\widetilde{u}| \leq 1$ in $Q_1$ and 
	\begin{align*}
		|\{X \in Q_1: D\widetilde{u}(X) \cdot e \leq l  \}|> \mu |Q_1| \quad \text{for all unit vector $e \in \mathbb{R}^n$}. 
	\end{align*}
	Therefore, by applying \Cref{lem:Du<1-d} for $\widetilde{u}$ and scaling back, we conclude that \eqref{eq:cor} holds for $i=k$.
\end{proof}

We are going to show that $u$ can be approximated by a linear function $L$ when $Du$ is close to some vector $e \in \mathbb{R}^n$ in a large portion. The following lemmas are necessary to control the oscillation between $u$ and $L$ uniformly in the time variable.
\begin{lemma} \label{lem:osc1}
	Let $u \in C(\overline{Q}_1)$ be a viscosity solution of \eqref{eq:reg} with $\gamma>-1$ and $0 < \varepsilon <1$. Assume that for a constant $A>0$, we have
\begin{equation*}
	\underset{x \in B_1}{\textnormal{osc}} \, u(x,t) \leq A \quad \text{for all } t \in [-1,0].
\end{equation*}
Then
\begin{equation*}
	\underset{Q_1}{\textnormal{osc}} \, u \leq C(A+A^{1+\gamma}), 
\end{equation*}
where $C>0$ is a constant depending only on $n$, $\Lambda$, and $\gamma$.
\end{lemma}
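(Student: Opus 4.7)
The plan is to exploit the PDE via a supersolution comparison argument, combined with a symmetry reduction. I would fix a reference point $(x_0, t_0) \in Q_1$; by subtracting $u(x_0, t_0)$ from $u$, which leaves the equation invariant, I may assume $u(x_0, t_0) = 0$. The spatial oscillation hypothesis then yields $|u(\cdot, t_0)| \leq A$ on $\overline{B_1}$. Note that $-u$ is a viscosity solution of the same type of equation with $F^\varepsilon$ replaced by $\widetilde F(M) := -F^\varepsilon(-M)$, which is also uniformly elliptic with constants $\lambda$ and $\Lambda$ and vanishes at zero. Since the barrier argument below uses only uniform ellipticity \ref{F1}, it suffices by symmetry to prove the one-sided bound $u(y, s) - u(x_0, t_0) \leq C(A + A^{1+\gamma})$ for every $(y, s) \in Q_1$.

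To prove the upper bound, I would construct an explicit classical supersolution of \eqref{eq:reg} on $B_1 \times [t_0, 0]$ of the form
\[
\psi(x, t) = A + K(t - t_0) + \beta \Phi(x),
\]
where $\Phi: B_1 \to [0, \infty)$ is smooth, convex, and blows up on $\partial B_1$ (a canonical choice being $\Phi(x) = (1 - |x|^2)^{-\alpha}$ for some $\alpha > 0$). The blow-up on $\partial B_1$ together with $\Phi \geq 0$ on $B_1$ ensures that $\psi \geq u$ on the parabolic boundary of $B_1 \times [t_0, 0]$: at $t = t_0$ one has $\psi \geq A \geq u(\cdot, t_0)$, and on the side $\psi \to +\infty$ while $u$ remains bounded (the comparison can be rigorously carried out on $B_{1-\delta} \times [t_0, 0]$ and then $\delta \to 0$). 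Since $D^2\psi = \beta D^2 \Phi \geq 0$ by convexity, one has $F^\varepsilon(\beta D^2 \Phi) \leq \mathcal{M}^+(\beta D^2 \Phi) = \Lambda \beta \operatorname{tr}(D^2 \Phi)$, so the supersolution inequality reduces to
\[
K \;\geq\; \bigl(\varepsilon^2 + \beta^2 |D\Phi|^2\bigr)^{\gamma/2} \, \Lambda \beta \operatorname{tr}(D^2 \Phi).
\]
Choosing $\beta \sim A$ so that $\beta \Phi(y)$ is a controlled multiple of $A$ for $y$ in any fixed compact subset of $B_1$, and inspecting the $\beta$-homogeneity of the right-hand side, one finds it is of order $\beta + \beta^{1+\gamma}$; hence $K$ of order $A + A^{1+\gamma}$ suffices. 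The comparison principle \Cref{thm:comp1}, which applies since the regularized operator $(\varepsilon^2 + |p|^2)^{\gamma/2} F^\varepsilon(M)$ is continuous in $(p, M)$ for $\varepsilon > 0$ and independent of $(x, t, r)$, then gives $u \leq \psi$ on $B_1 \times [t_0, 0]$. Evaluating at any $y$ in a fixed interior ball (say $B_{1/2}$) yields $u(y, s) \leq C(A + A^{1+\gamma})$; the spatial oscillation hypothesis propagates this bound from $B_{1/2}$ to all of $B_1$ at the cost of an extra $A$. Past times $s < t_0$ are handled by the analogous construction on $B_1 \times [-1, t_0]$.

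The main obstacle arises in the singular regime $-1 < \gamma < 0$: at points where $D\Phi$ vanishes (e.g.\ the origin for any radial $\Phi$), the factor $(\varepsilon^2 + \beta^2|D\Phi|^2)^{\gamma/2}$ degenerates to $\varepsilon^\gamma$, which blows up as $\varepsilon \to 0^+$. To keep the constant $K$ independent of $\varepsilon$ one must replace $\Phi$ by a non-radial profile whose gradient has a positive lower bound on the relevant region---for instance, $\Phi(x) = (1 - |x - z|^2)^{-\alpha}$ with a pole $z$ placed slightly outside $\overline{B_1}$, or a finite collection of such shifted profiles covering the various target points $y \in B_1$. With this refinement, $|D\Phi|$ stays bounded away from zero, the supersolution inequality holds with $\varepsilon$-independent constants, and the exponent $1 + \gamma$ in the conclusion emerges directly from the $\beta$-homogeneity of $(\varepsilon^2 + |p|^2)^{\gamma/2}F^\varepsilon(M)$ evaluated on the barrier scale $\beta \sim A$.
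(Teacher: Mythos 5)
There is a genuine gap, and it sits at the heart of the construction: a barrier of the form $\psi = A + K(t-t_0) + \beta\Phi(x)$ with $\Phi$ blowing up on $\partial B_1$ cannot be a supersolution of \eqref{eq:reg} on all of $B_1$ with a finite $K$. Near $\partial B_1$ one has $|D\Phi|\sim(1-|x|^2)^{-\alpha-1}$ and $\operatorname{tr}(D^2\Phi)\sim(1-|x|^2)^{-\alpha-2}$, so the required inequality $K\geq(\varepsilon^2+\beta^2|D\Phi|^2)^{\gamma/2}\Lambda\beta\operatorname{tr}(D^2\Phi)$ forces $K\gtrsim(1-|x|^2)^{-\gamma(\alpha+1)-(\alpha+2)}$, and the exponent $-\gamma(\alpha+1)-(\alpha+2)$ is negative for every $\gamma>-1$ and every $\alpha>0$; hence the right-hand side tends to $+\infty$ as $|x|\to1$ and the supersolution property fails. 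Retreating to $B_{1-\delta}$ does not help: either you keep the blow-up profile (and $K(\delta)\to\infty$ as $\delta\to0$, so no uniform bound survives), or you use a bounded profile, in which case the lateral boundary condition $\psi\geq u$ on $\partial_s(B_{1-\delta}\times[t_0,0])$ requires an a priori bound on $u$ there --- which is precisely what the lemma is trying to establish. In short, your argument never uses the per-time-slice oscillation hypothesis to control the \emph{lateral} part of the comparison, and that is the missing idea.

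The paper's proof resolves exactly this tension with a bounded barrier: $\overline{w}(x,t)=2A|x|^2+\cdots$ (resp.\ $2A|x|^\beta$ for $-1<\gamma<0$), whose spatial oscillation over $B_1$ is exactly $2A>A$. If $u-\overline{w}$ attained its maximum at a lateral boundary point $x_0\in\partial B_1$, then $u(x_0,t_0)-u(0,t_0)\geq\overline{w}(x_0,t_0)-\overline{w}(0,t_0)=2A$ would contradict $\operatorname{osc}_{B_1}u(\cdot,t_0)\leq A$; so the contact point is interior and no lateral comparison is ever needed. For the singular range the paper also handles the degeneracy at the critical point differently from your shifted-pole idea: choosing $\beta=\frac{2+\gamma}{1+\gamma}$ makes $\bigl|D(|x|^\beta)\bigr|^{\gamma}\,\mathcal{M}^+\bigl(D^2|x|^\beta\bigr)$ homogeneous of degree $(\beta-1)\gamma+\beta-2=0$ in $|x|$, so the gradient singularity at the origin is exactly cancelled by the vanishing of the Hessian there, yielding the clean constant $\Lambda(n+\beta-2)(2\beta)^{1+\gamma}A^{1+\gamma}$. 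Your symmetry reduction and the $\beta$-homogeneity heuristic giving the order $A+A^{1+\gamma}$ are sound, but the barrier itself must be rebuilt along these lines for the proof to close.
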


\begin{proof}
Consider 
\begin{align*}
	\overline{v}(x) = 
		\begin{cases}
			2A|x|^2 - 5nA(1+16A^2 )^{\gamma/2} \Lambda - u(x,-1) & \text{if } \gamma \ge 0 \\
			2A|x|^\beta - \big(\Lambda(n+\beta-2)(2\beta)^{\gamma+1} + 1\big)  A^{1+\gamma} t - u(x,-1)  & \text{if } -1 < \gamma < 0
		\end{cases}
\end{align*}
and let $\overline{v}(\overline{x}) = \inf_{B_1} \overline{v} $ for some $\overline{x} \in \overline{B}_1$, where $\beta = \frac{2+\gamma}{1+\gamma} >2$. Then the function 
\begin{equation*}
	\overline{w}(x,t) = 
		\begin{cases}
			2A|x|^2 + 5nA(1+16A^2 )^{\gamma/2} \Lambda t - \overline{v}(\overline{x}) & \text{if } \gamma \ge 0  \\
			2A|x|^\beta + \big(\Lambda(n+\beta-2)(2\beta)^{\gamma+1} + 1\big)  A^{1+\gamma} t - \overline{v}(\overline{x}) & \text{if } -1<\gamma<0
		\end{cases}
\end{equation*}
satisfies $\overline{w}(\overline{x},-1) = u(\overline{x},-1)$ and 
\begin{equation} \label{ieq:w>u}
	\overline{w}(x,-1) \ge u(x,-1) \quad \text{for all } x \in \overline{B_1}.
\end{equation}
Here we observe that $\overline{x} \in B_1$; otherwise, we find a contradiction from 
\begin{equation*}
	2A = \overline{w}(\overline{x},-1)-  \overline{w}(0,-1) \leq u(\overline{x},-1)-u(0,-1) \leq \underset{B_1}{\textnormal{osc}} \, u(\cdot,-1) \leq A.
\end{equation*}
We now claim that 
$$\overline{w} \ge u \quad \text{in } Q_1.$$
If not, then there exists $X_0=(x_0,t_0) \in \overline{Q_1}$ such that $ \sup_{Q_1}(u-\overline{w})= u(X_0)-\overline{w}(X_0) >0$. \eqref{ieq:w>u} implies that $t_0 \ne -1$. By considering $\overline{w} + \sup{Q_1}(u-\overline{w})$ and $X_0$ instead of $\overline{w}$ and $(\overline{x},-1)$, we can see $x_0 \in B_1$ due to the same argument above. By recalling that $\overline{w} + \max_{Q_1}(u-\overline{w})$ touches $u$ from above at $X_0$, we have if $\gamma \ge 0$,
\begin{align*}
	5nA(1+16A^2 )^{\gamma/2} \Lambda =\overline{w}_t(X_0) &\leq (\varepsilon^2+|D\overline{w}(X_0)|^2 )^{\gamma/2} F\big(D^2\overline{w}(X_0)\big) \\
	&\leq(1+16A^2 )^{\gamma/2} \mathcal{M}^{+} (4A I_n) \\
	&\leq 4nA(1+16A^2 )^{\gamma/2} \Lambda,
\end{align*}
 and  if $-1<\gamma<0$,
\begin{align*}
	\big(\Lambda(n+\beta-2)(2\beta)^{\gamma+1} + 1\big)  A^{1+\gamma}=\overline{w}_t(X_0) &\leq (\varepsilon^2+|D\overline{w}(X_0)|^2 )^{\gamma/2} F\big(D^2\overline{w}(X_0)\big) \\
	&\leq 2A(\varepsilon^2+4\beta^2 A^2 |x|^{2\beta-2} )^{\gamma/2} \mathcal{M}^{+} (D^2|x|^{\beta}) \\
	&\leq \Lambda (n+\beta-2) (2\beta)^{1+\gamma} A^{1+\gamma}.
\end{align*}
Since it is impossible in both cases, this proves the claim. 

We next consider similarly 
\begin{align*}
	\underline{v}(x) = 
		\begin{cases}
			-2A|x|^2 + 5nA(1+16A^2 )^{\gamma/2} \Lambda - u(x,-1) & \text{if } \gamma \ge 0 \\
			-2A|x|^\beta + \big(\Lambda(n+\beta-2)(2\beta)^{\gamma+1} + 1\big)  A^{1+\gamma} t - u(x,-1)  & \text{if } -1 < \gamma < 0
		\end{cases}
\end{align*}
and let $\underline{v}(\underline{x}) = \sup_{B_1} \overline{v} $ for some $\underline{x} \in \overline{B}_1$. We can also see that the function 
\begin{equation*}
	\underline{w}(x,t) = 
		\begin{cases}
			-2A|x|^2 - 5nA(1+16A^2 )^{\gamma/2} \Lambda t - \underline{v}(\underline{x}) & \text{if } \gamma \ge 0  \\
			-2A|x|^\beta - \big(\Lambda(n+\beta-2)(2\beta)^{\gamma+1} + 1\big)  A^{1+\gamma} t - \underline{v}(\underline{x}) & \text{if } -1<\gamma<0
		\end{cases}
\end{equation*}
satisfies $\underline{w}(\underline{x},-1) = u(\underline{x},-1)$, $\underline{w} \leq u$ in $Q_1$, and 
\begin{align*}
	\underline{w}(x,-1) \leq u(x,-1) \quad \text{for all } x \in \overline{B_1}.
\end{align*}
Since
\begin{align*}
	&\underline{v}(\underline{x}) - \overline{v}(\overline{x}) \\
	&\leq u(\overline{x},-1) - u(\underline{x},-1) + \begin{cases}
		10nA(1+16A^2 )^{\gamma/2} \Lambda & \text{if }\gamma\ge 0 \\
		2\big(\Lambda(n+\beta-2)(2\beta)^{\gamma+1} + 1\big)  A^{1+\gamma}  & \text{if } -1<\gamma<0
	\end{cases} \\
	&\leq \underset{x \in B_1}{\textnormal{osc}} \, u(x,-1) + \begin{cases} 
		10nA(1+16A^2 )^{\gamma/2} \Lambda & \text{if }\gamma\ge 0 \\
		2\big(\Lambda(n+\beta-2)(2\beta)^{\gamma+1} + 1\big)  A^{1+\gamma}  & \text{if } -1<\gamma<0
	\end{cases}\\
	&\leq \begin{cases}
		 A + 10 \cdot 17^{\gamma/2}  n \Lambda \left(A+A^{1+\gamma}\right) & \text{if } \gamma\ge 0\\
		 A + 2\big(\Lambda(n+\beta-2)(2\beta)^{\gamma+1} + 1\big)  A^{1+\gamma}  & \text{if } -1<\gamma<0
	\end{cases} 
\end{align*}
we have
\begin{align*}
	\underset{Q_1}{\textnormal{osc}} \, u & \leq \sup_{Q_1} \overline{w} - \inf_{Q_1} \underline{w} \leq 4A + \underline{v}(\underline{x}) - \overline{v}(\overline{x}) \leq 
		C(A+A^{1+\gamma}),
\end{align*}
where $C>0$ is a constant depending only on $n$, $\Lambda$, and $\gamma$.
\end{proof}
\begin{lemma} \label{lem_osc2}
	Let $u \in C(\overline{Q}_1)$ be a viscosity solution of \eqref{eq:reg} with $\gamma \in \mathbb{R}$ and $0 < \varepsilon <1$ and let $e$ be a unit vector in $\mathbb{R}^n$. Assume that for a constant $A \in (0,1/8)$, we have
\begin{equation*}
	\underset{x\in B_1}{\textnormal{osc}} \, \big(u(x,t) - e \cdot x \big) \leq A \quad \text{for all } t \in [-1,0].
\end{equation*}
Then
\begin{equation*}
	\underset{Q_1}{\textnormal{osc}} \, (u - e \cdot x) \leq  CA,
\end{equation*}
where $C$ is a constant depending only on $n$, $\Lambda$, and $\gamma$.
\end{lemma}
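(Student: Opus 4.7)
The plan is to mimic the two-sided barrier argument of Lemma \ref{lem:osc1} applied to the translated solution $w(x,t) := u(x,t) - e \cdot x$, which is a viscosity solution of
\begin{equation*}
w_t = (\varepsilon^2 + |Dw+e|^2)^{\gamma/2} F(D^2 w) \qquad \text{in } Q_1,
\end{equation*}
and for which the hypothesis reads $\osc_{B_1} w(\cdot, t) \leq A$ for every $t \in [-1,0]$. The key observation, which explains why no case split on the sign of $\gamma$ is needed here (in contrast to Lemma \ref{lem:osc1}), is this: if we use barriers whose gradient satisfies $|D\overline W| \leq 4A < 1/2$, valid since $A < 1/8$, then $|D\overline W + e| \in [1/2, 3/2]$, and hence $(\varepsilon^2 + |D\overline W + e|^2)^{\gamma/2}$ is bounded above by a constant depending only on $\gamma$, regardless of its sign. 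This makes a plain quadratic barrier of the form $2A|x|^2 + Kt$ sufficient for every $\gamma \in \mathbb{R}$.

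Concretely, I would set $\overline V(x) := 2A|x|^2 - K - w(x,-1)$, let $\overline x \in \overline{B_1}$ attain $\inf_{B_1} \overline V$, and define $\overline W(x,t) := 2A|x|^2 + Kt - \overline V(\overline x)$, so that $\overline W(\cdot,-1) \geq w(\cdot,-1)$ with equality at $\overline x$. A direct computation using $D^2 \overline W = 4A I_n$ (so $F(D^2\overline W) \leq 4nA\Lambda$) together with the bound on the leading coefficient shows that, for $K = C_0(n,\Lambda,\gamma) A$ large enough, $\overline W$ is a strict classical supersolution of the equation for $w$. A standard viscosity comparison then yields $\overline W \geq w$ in $Q_1$: if $\sup_{Q_1}(w - \overline W) > 0$ were attained at some $X_0 = (x_0, t_0) \in \overline{Q_1}$, then $t_0 > -1$; when $x_0 \in B_1$, the translated barrier touches $w$ from above at $X_0$ and the viscosity subsolution inequality for $w$ contradicts strict supersolvability; when $x_0 \in \partial B_1$, evaluating the translated barrier at $(x_0, t_0)$ and $(0, t_0)$ forces $w(x_0, t_0) - w(0, t_0) \geq 2A$, contradicting $\osc_{B_1} w(\cdot, t_0) \leq A$. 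A symmetric construction with $\underline V(x) := -2A|x|^2 + K - w(x,-1)$ and $\underline V(\underline x) = \sup_{B_1} \underline V$ produces $\underline W(x,t) := -2A|x|^2 - Kt - \underline V(\underline x) \leq w$ in $Q_1$.

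To conclude, I would compute
\begin{equation*}
\osc_{Q_1} w \leq \sup_{Q_1} \overline W - \inf_{Q_1} \underline W = 4A + \underline V(\underline x) - \overline V(\overline x),
\end{equation*}
and then bound $\underline V(\underline x) - \overline V(\overline x) \leq 2K + \bigl(w(\overline x,-1) - w(\underline x,-1)\bigr) \leq 2K + A$ using the spatial oscillation hypothesis at $t = -1$ (the nonpositive terms $-2A(|\overline x|^2 + |\underline x|^2)$ are discarded). This gives $\osc_{Q_1} w \leq (5 + 2C_0) A = C A$, as desired. The main technical subtlety is the handling of the lateral boundary $\partial B_1 \times (-1, 0]$ in the comparison step: since $u$ carries no prescribed boundary data, this case must be closed by invoking the spatial oscillation hypothesis exactly as in Lemma \ref{lem:osc1}. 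Everything else reduces to routine barrier bookkeeping.
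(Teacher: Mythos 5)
Your proposal is correct and follows essentially the same route the paper intends: the paper's own proof of this lemma simply defers to the barrier argument of \Cref{lem:osc1} (and to \cite[Lemma 4.5]{IJS19}), and your adaptation — working with $w = u - e\cdot x$, noting that $A<1/8$ keeps $|D\overline{W}+e|$ pinned in $[1/2,3/2]$ so the factor $(\varepsilon^2+|D\overline{W}+e|^2)^{\gamma/2}$ is uniformly bounded for every $\gamma\in\mathbb{R}$, and closing the lateral-boundary case of the comparison via the spatial oscillation hypothesis — is exactly the intended argument. The bookkeeping ($K=C_0 A$, $\osc_{Q_1} w \leq 4A + 2K + A$) checks out.
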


\begin{proof}
	It follows from a similar argument as \Cref{lem:osc1}; see \cite[Lemma 4.5]{IJS19} for details.
\end{proof}

If $Du(O)$ is nonzero, there exists a direction $e$ that does not satisfy \eqref{inq:ith_step}. Roughly speaking, $Du$ and $e$ are close to each other in a set of positive measure, which implies that $u$ can be approximated by some linear function $L$.
\begin{lemma} \label{lem:small_meas}
	Let $\eta>0$ be a constant and let $u$ be a smooth solution of \eqref{eq:reg} with $\gamma >-1$ and $0<\varepsilon <1$ such that $\sup_{Q_1} |Du| \leq 1$. Assume 
\begin{equation} \label{meas_Du-e}
	|\{ X \in Q_1 : |Du(X) -e| > \varepsilon_0 \} | \leq \varepsilon_1 
\end{equation}
for some unit vector $e \in \mathbb{R}^n$ and two positive constants $\varepsilon_0 $, $\varepsilon_1 $. Then if $\varepsilon_0 $ and $\varepsilon_1 $ are sufficiently small, there exists a constant $a \in \mathbb{R}$ such that 
\begin{equation*}
	|u(x,t)-a - e \cdot x| \leq \eta \quad \text{for all } (x,t) \in Q_{1/2},
\end{equation*}
where both $\varepsilon_0 $ and  $\varepsilon_1 $ depend only on $n$, $\Lambda$, $\gamma$, and $\eta$.

\end{lemma}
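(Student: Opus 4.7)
Let $w(x,t) := u(x,t) - e\cdot x$. Then $|Dw| \leq |Du|+1 \leq 2$ and the hypothesis reads $|\{X \in Q_1 : |Dw(X)| > \varepsilon_0\}| \leq \varepsilon_1$. The plan is to establish first that $\osc_{B_1} w(\cdot,t)$ is small for most times $t$, then to extend the bound to every $t$ by the half-H\"older-in-$t$ regularity of \Cref{half_hol_t}, and finally to invoke (a rescaled form of) \Cref{lem_osc2} to promote slice-wise oscillation control into cylinder-wise oscillation control.

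By Fubini the set
\[
T_{\mathrm{bad}} := \bigl\{ t \in (-1,0] : |\{x \in B_1 : |Dw(x,t)| > \varepsilon_0\}| > \varepsilon_0 \bigr\}
\]
has one-dimensional measure at most $\varepsilon_1/\varepsilon_0$ by Chebyshev. For any $t \notin T_{\mathrm{bad}}$, splitting $B_1$ into the spatial bad slice (of measure $\leq \varepsilon_0$, on which $|Dw(\cdot,t)| \leq 2$) and its complement (on which $|Dw(\cdot,t)| \leq \varepsilon_0$) yields, for any fixed $p > n$,
\[
\int_{B_1} |Dw(x,t)|^p \, dx \leq 2^p \varepsilon_0 + \varepsilon_0^p |B_1| \leq C(n,p)\,\varepsilon_0.
\]
Morrey's embedding on $B_1$ then gives $\osc_{B_1} w(\cdot,t) \leq C(n,p)\,\varepsilon_0^{1/p}$.

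To handle $t \in T_{\mathrm{bad}}$, I impose the smallness $\varepsilon_1 \leq \varepsilon_0^2$ so that $|T_{\mathrm{bad}}| \leq \varepsilon_0$. Then every $t \in (-1+\varepsilon_0,0]$ admits a good time $t' \in (-1,0] \setminus T_{\mathrm{bad}}$ within distance $\varepsilon_0$, because the backward interval $[t-\varepsilon_0,t]$ has length $\varepsilon_0 > |T_{\mathrm{bad}}|$. Combined with \Cref{half_hol_t}, which provides $C_\ast$ depending only on $n,\lambda,\Lambda,\gamma$ with $|w(x,t) - w(x,t')| \leq C_\ast |t-t'|^{1/2}$ on $Q_{3/4}$, this gives
\[
\osc_{B_{3/4}} w(\cdot,t) \leq A := C(n,p)\,\varepsilon_0^{1/p} + 2 C_\ast \,\varepsilon_0^{1/2} \quad \text{for every } t \in (-9/16,0].
\]
Choosing $\varepsilon_0$ small depending only on $n,\Lambda,\gamma,\eta$ forces $A < 1/8$ and $C_1 A \leq \eta$, where $C_1$ is the constant obtained by running the proof of \Cref{lem_osc2} on $Q_{3/4}$ (a standard parabolic rescaling preserves the form of \eqref{eq:reg} up to renaming the ellipticity-preserving operator $F^\varepsilon$). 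The lemma then delivers $\osc_{Q_{1/2}} w \leq \osc_{Q_{3/4}} w \leq C_1 A \leq \eta$, and setting $a$ to be any value of $w$ on $Q_{1/2}$ completes the proof.

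The main delicate point is the coupling of $\varepsilon_0$ and $\varepsilon_1$ through the square-root-in-$t$ regularity: the relation $\varepsilon_1 \lesssim \varepsilon_0^2$ is exactly what guarantees that no subinterval of $(-1,0]$ of length $\varepsilon_0$ is wholly bad, and this is what determines the joint dependence of $\varepsilon_0$ and $\varepsilon_1$ on $n,\Lambda,\gamma,\eta$ claimed in the statement. A secondary issue is that \Cref{lem_osc2} is stated on $Q_1$, so one must verify that its proof transfers to the smaller cylinder $Q_{3/4}$ via the scaling that keeps \eqref{eq:reg} in the same family; this should be routine since only the ellipticity constants and the range of $\gamma$ intervene.
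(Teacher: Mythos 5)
Your argument is correct and is essentially the paper's own proof, which simply defers to \cite[Lemma 4.6]{IJS19}: Fubini/Chebyshev to isolate good time slices, Morrey's inequality to control the spatial oscillation of $u-e\cdot x$ there, \Cref{half_hol_t} to cover the bad times (this is where the coupling $\varepsilon_1\lesssim\varepsilon_0^2$ enters, as in the cited reference), and \Cref{lem_osc2} to pass from slice-wise to cylinder-wise oscillation. The only points worth tidying are routine: the good time $t'$ should be kept inside the cylinder where \Cref{half_hol_t} applies, and the rescaling of \Cref{lem_osc2} to $Q_{3/4}$ should be written out once.
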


\begin{proof}
	It follows from the combination of \Cref{half_hol_t}, \Cref{lem:osc1}, and \Cref{lem_osc2}; see \cite[Lemma 4.6]{IJS19} for details.
\end{proof}

\begin{theorem}[Regularity of small perturbation solutions] \label{small_pert_sol}
	Let $u$ be a viscosity solution of \eqref{eq:reg} in $Q_1$. For each $\tilde{\alpha} \in (0,1)$, there exist constant $\eta>0$ depending only on $n$, $\lambda$, $\Lambda$, $\gamma$, $\tilde{\alpha}$, and $\|F\|_{C^{1,1}(\mathcal{S}^n)}$ such that if $|u-L| \leq \eta$ in $Q_1$ for some linear function $L(x)$ satisfying $1/2 \leq |DL| \leq 2$, then $u \in C^{2,\tilde{\alpha}}(\overline{Q_{1/2}})$ and 
$$\|u-L\|_{C^{2,\tilde{\alpha}}(\overline{Q_{1/2}})} \leq C,$$
where $C>0$ is a constant depending only on $n$, $\lambda$, $\Lambda$, $\gamma$, $\tilde{\alpha}$, and $\|F\|_{C^{1,1}(\mathcal{S}^n)}$.
\end{theorem}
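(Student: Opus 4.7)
The plan is to translate by $L$ and reduce the claim to a small-perturbation problem for a smooth, uniformly elliptic, convex fully nonlinear parabolic equation. Set $v := u - L$ and $e := DL$. Since $L$ is linear, $v_t = u_t$, $Dv = Du - e$, and $D^2 v = D^2 u$, so $v$ is a viscosity solution of
\[
v_t = G_e(Dv, D^2 v) \quad \text{in } Q_1, \qquad G_e(q, M) := (\varepsilon^2 + |e+q|^2)^{\gamma/2} F^\varepsilon(M),
\]
and $G_e(0, 0) = 0$, so the zero function is itself a classical solution of the same equation. Hence $v$ is a small-$L^\infty$ perturbation of a smooth trivial solution.

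First I would verify that $G_e$ is smooth, uniformly elliptic in $M$, and convex in $M$ on a neighborhood of $(q,M) = (0,0)$, with bounds \emph{independent of} $\varepsilon \in (0,1)$. Because $|e| \in [1/2, 2]$, for every $|q| \le 1/4$ one has $|e+q| \ge 1/4$, hence $\varepsilon^2 + |e+q|^2 \in [1/16, 7]$; consequently $(\varepsilon^2 + |e+q|^2)^{\gamma/2}$ lies between two positive constants depending only on $\gamma$, and is $C^\infty$ in $q$ with all derivatives bounded in terms of $\gamma$ alone. Combined with $F^\varepsilon \in C^\infty$ and the mollification bound $\|F^\varepsilon\|_{C^{1,1}} \le \|F\|_{C^{1,1}(\mathcal{S}^n)}$, the operator $G_e$ is $C^2$ near $(0,0)$, uniformly elliptic in $M$ with constants depending only on $n,\lambda,\Lambda,\gamma$, and convex in $M$ as the product of a positive scalar and the convex map $F^\varepsilon$.

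Having secured these properties, I would invoke the parabolic small-perturbation theorem of Savin type: for a viscosity solution of a smooth uniformly elliptic convex-in-$M$ parabolic equation that is $L^\infty$-close to a smooth classical solution, the solution inherits $C^{2,\tilde\alpha}$ regularity on an interior sub-cylinder. With the smooth reference solution being $v\equiv 0$, choosing $\eta$ sufficiently small depending on $n,\lambda,\Lambda,\gamma,\tilde\alpha$, and $\|F\|_{C^{1,1}(\mathcal{S}^n)}$ yields $v \in C^{2,\tilde\alpha}(\overline{Q_{1/2}})$ with $\|v\|_{C^{2,\tilde\alpha}(\overline{Q_{1/2}})} \le C$, i.e.\ the stated bound.

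The main obstacle is to make every constant, including the smallness threshold $\eta$, quantitatively independent of $\varepsilon$. The lower bound $|e| \ge 1/2$ prevents any degenerate or singular behavior of $G_e$ for $|q|$ small, and the $C^{1,1}$ bound on $F$ transfers to a uniform $C^{1,1}$ bound on $F^\varepsilon$, so the smoothness data of $G_e$ relevant to the small-perturbation theorem (namely its $C^2$-norm near $(0,0)$, the uniform ellipticity constants, and the convexity) are all $\varepsilon$-free. A self-contained alternative to a direct citation is to first extract uniform interior $C^{1,\alpha}$-estimates for $v$ from the uniformly elliptic, smooth-in-$p$ equation above, then linearize $G_e$ along $v$ to obtain a uniformly parabolic equation with $C^\alpha$ coefficients and convex nonlinear correction, apply the parabolic Evans--Krylov theorem to reach $C^{2,\alpha_0}$ for a universal exponent $\alpha_0$, and finally iterate Schauder estimates on the linearized equation to promote the exponent to the prescribed $\tilde\alpha$.
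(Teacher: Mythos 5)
Your proposal is correct and follows essentially the same route as the paper: the paper's proof is a one-line observation that $L$ itself solves \eqref{eq:reg} (equivalently, $v=u-L$ is $L^\infty$-close to the trivial solution of a smooth, uniformly elliptic, convex equation) followed by a citation of Wang's parabolic small-perturbation theorem \cite[Corollary 1.2]{Wan13}, which is exactly the Savin-type result you invoke. Your additional verification that the hypotheses hold with constants independent of $\varepsilon$ (via $|e|\geq 1/2$ and $\|F^\varepsilon\|_{C^{1,1}}\leq\|F\|_{C^{1,1}(\mathcal{S}^n)}$) is a useful elaboration of what the paper leaves implicit.
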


\begin{proof}
Since $L$ is a solution of \eqref{eq:reg} in $Q_1$, the conclusion follows from \cite[Corollary 1.2]{Wan13}. 
\end{proof}

We are now ready to prove the H\"older estimate for $Du$ and the H\"older estimate in time variable, which are independent of $\varepsilon>0$.
\begin{theorem}\label{uniformholder}
	Let $u$ be a smooth solution of \eqref{eq:reg} in $Q_1$ with $\gamma>-1$ and $0<\varepsilon<1$ such that $\sup_{Q_1}|Du| \leq 1$. Then there exist constant $\alpha>0$ depending only on $n$, $\lambda$, $\Lambda$, $\gamma$, and $\|F\|_{C^{1,1}(\mathcal{S}^n)}$ such that 
\begin{equation} \label{hol_Du}
	|Du(x,t)-Du(y,s)| \leq C(|x-y|^{\alpha} +|t-s|^{\frac{\alpha}{2-\alpha \gamma}}) \quad \text{for all } (x,t),(y,s) \in Q_{1/2},
\end{equation}
where $C>0$ is a constant depending only on $n$, $\lambda$, $\Lambda$, $\gamma$, and $\|F\|_{C^{1,1}(\mathcal{S}^n)}$. Moreover, there holds 
\begin{equation} \label{hol_time}
	|u(x,t)-u(x,s)| \leq C |t-s|^{\frac{1+\alpha}{2-\alpha \gamma}}  \quad \text{for all } (x,t),(y,s) \in Q_{1/2}.
\end{equation}
\end{theorem}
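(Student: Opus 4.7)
The plan is to establish a pointwise $C^{1,\alpha}$-estimate at the origin with constants independent of $\varepsilon$, following the dichotomy strategy of \cite{IJS19, JS17}, and then propagate it to $Q_{1/2}$ by translation invariance of the equation. Fix $l\in(1/2,1)$ close to $1$ and $\mu>0$ small, and let $\tau,\delta>0$ be the constants provided by \Cref{cor:Du<1-d}. Iterating the corollary from $k=0$ leads to exactly two alternatives: either (Case~A) the measure hypothesis \eqref{inq:ith_step} holds for every unit vector $e$ at every step $i=0,\ldots,k_\ast$, where $k_\ast:=\lfloor \log\varepsilon/\log(1-\delta)\rfloor$; or (Case~B) there exists a smallest $k_0\leq k_\ast$ at which \eqref{inq:ith_step} fails for some unit vector $e_0$.

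In Case~A, \Cref{cor:Du<1-d} gives $|Du|\leq(1-\delta)^{i+1}$ in $Q_{\tau^{i+1}}^{(1-\delta)^{i+1}}$ for every such $i$, so on the cylinder of spatial radius $r=\tau^k$ we obtain $\osc Du\leq 2(1-\delta)^k=Cr^\alpha$ with $\alpha:=\log(1-\delta)/\log\tau$. The condition \eqref{cond:tau} is tailored so that $\alpha\in(0,1/(1+\gamma))$, making the parabolic time width $r^2(1-\delta)^{-k\gamma}=r^{2-\alpha\gamma}$ well-defined and consistent. In Case~B, I would introduce the rescaling
$$\tilde u(y,s):=(1-\delta)^{-k_0}\tau^{-k_0}\,u\!\left(\tau^{k_0}y,\,\tau^{2k_0}(1-\delta)^{-k_0\gamma}s\right),$$
which satisfies a regularized equation of the same form on $Q_1$ with $\tilde\varepsilon=\varepsilon(1-\delta)^{-k_0}\in(0,1]$ and $|D\tilde u|\leq 1$ on $Q_1$. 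The failure of \eqref{inq:ith_step} at step $k_0$ forces $\{D\tilde u\cdot e_0>l\}$ to have measure at least $(1-\mu)|Q_1|$; combined with $|D\tilde u|\leq 1$ this gives $|\{|D\tilde u-e_0|>\varepsilon_0\}|\leq\varepsilon_1$ with $\varepsilon_0,\varepsilon_1\to 0$ as $l\to 1$, $\mu\to 0$. \Cref{lem:small_meas} then approximates $\tilde u$ by a linear function $L$ of unit slope, and \Cref{small_pert_sol} yields $\tilde u\in C^{2,\tilde\alpha}(\overline{Q_{1/4}})$ with uniform bounds. Scaling back delivers the Hölder decay of $Du$ at all scales $r\leq \tau^{k_0}$, which combines with the decay of Case~A for the preceding steps to give a single pointwise $C^{1,\alpha}$-estimate at the origin with uniform constants.

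Repeating the argument at an arbitrary $(x_0,t_0)\in Q_{1/2}$ (the equation and the bound $|Du|\leq 1$ are translation invariant) produces uniform pointwise $C^{1,\alpha}$-estimates on $Q_{1/2}$. A standard procedure then converts these pointwise estimates into the joint bound \eqref{hol_Du}, the mixed time exponent $\alpha/(2-\alpha\gamma)$ arising from matching spatial increments $|x-y|\sim r$ with temporal ones $|t-s|\sim r^{2-\alpha\gamma}$. For \eqref{hol_time}, given $|t-s|$ I would select $k$ so that $\tau^{2k}(1-\delta)^{-k\gamma}\sim|t-s|$, pass to the rescaling $\tilde u$ above, and apply \Cref{half_hol_t} to $\tilde u$; unwinding the scaling turns the half-Hölder bound $|\tilde u(x,s_1)-\tilde u(x,s_2)|\leq C|s_1-s_2|^{1/2}$ into a factor $(1-\delta)^k\tau^k=r^{1+\alpha}\sim|t-s|^{(1+\alpha)/(2-\alpha\gamma)}$.

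The principal obstacle is Case~B: one must first promote the measure failure of \eqref{inq:ith_step} to the stronger statement that $D\tilde u$ is close to the unit vector $e_0$ on a set of almost full measure, and then calibrate the parameters $l$, $\mu$, $\varepsilon_0$, $\varepsilon_1$, $\eta$ compatibly with the $C^{2,\tilde\alpha}$ threshold from \Cref{small_pert_sol}. Crucially, $\tau$ and $\delta$ (hence $\alpha$) must be fixed \emph{before} invoking \Cref{small_pert_sol}, so that Case~A and the rescaled estimate of Case~B both yield the same exponent $\alpha=\log(1-\delta)/\log\tau$ satisfying \eqref{cond:tau}.
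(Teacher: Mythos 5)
Your proposal follows the paper's strategy almost exactly (iteration via \Cref{cor:Du<1-d}, rescaling at the stopping index, \Cref{lem:small_meas} plus \Cref{small_pert_sol} when the measure condition fails), but your Case~A is incomplete in a way that matters for the uniformity in $\varepsilon$. When the measure hypothesis \eqref{inq:ith_step} holds at every step up to $k_\ast=\lfloor\log\varepsilon/\log(1-\delta)\rfloor$, your argument only yields $\osc_{Q_{\tau^k}^{(1-\delta)^k}} Du\leq Cr^{\alpha}$ for $r=\tau^{k}$ with $k\leq k_\ast+1$; it says nothing about scales $r<\tau^{k_\ast+1}$, so the pointwise $C^{1,\alpha}$-estimate at the origin is not established there with constants independent of $\varepsilon$. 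The iteration cannot be continued because the rescaled parameter $\tilde\varepsilon=\varepsilon(1-\delta)^{-k}$ leaves the admissible range. The missing ingredient is the observation that precisely at the stopping scale $m=k_\ast$ one has $1/4\leq\varepsilon^{2}(1-\delta)^{-2m}\leq 1$, so the rescaled equation is \emph{uniformly} parabolic with universally controlled ellipticity; interior Schauder theory (via \Cref{thm:solv}, \Cref{interiorholder}, and the estimates of \cite{Wan92b}) then gives $|D\widetilde{u}(x,t)-q|\leq C(|x|+|t|^{1/2})$ and $|\widetilde{u}_t|\leq C$ in $Q_{\tau}^{1-\delta}$, which scales back to cover all finer scales. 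This is the paper's Case~1 and it cannot be omitted: for each fixed $\varepsilon$ the estimate at fine scales would otherwise hold only with $\varepsilon$-dependent constants, defeating the purpose of the theorem.

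The same omission propagates to \eqref{hol_time}. Your plan of choosing $k$ with $\tau^{2k}(1-\delta)^{-k\gamma}\sim|t-s|$ and applying \Cref{half_hol_t} to the rescaled function (normalized by \Cref{lem:osc1}) is a legitimate variant of the paper's argument for $|t-s|\gtrsim\tau^{2m}(1-\delta)^{-m\gamma}$, but for smaller time increments you again need the bound $|\widetilde{u}_t|\leq C$ at the terminal scale, which in Case~A only comes from the uniformly parabolic Schauder step above (in Case~B it comes from \Cref{small_pert_sol}, as you say). Once that step is added, the rest of your outline — the conversion of the measure failure into the hypothesis of \Cref{lem:small_meas}, the ordering of the parameters $\eta\to\varepsilon_0,\varepsilon_1\to l,\mu\to\tau,\delta\to\alpha$, and the role of \eqref{cond:tau} in guaranteeing $\alpha<1/(1+\gamma)$ — matches the paper's proof.
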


\begin{proof}
The proof is similar to that of \cite[Theorem 4.8]{IJS19}. However, for completeness, we provide the details of the proof here.

Without loss of generality, by the standard translation argument, it is sufficient to prove \eqref{hol_Du} at $(y,s)=(0,0)$ and \eqref{hol_time} at $(x,s)=(0,0)$. By \Cref{cor:Du<1-d}, for every $l \in (1/2,1)$ and $\mu>0$, there exists $\tau$, $\delta>0$ depending only on $n$, $\lambda$, $\Lambda$, $\gamma$, $\mu$, and $l$ such that for every nonnegative integer $k \leq \log \varepsilon / \log(1-\delta)$, if 
\begin{equation} \label{est:meas}
	|\{X \in Q_{\tau^i}^{(1-\delta)^i}: Du(X) \cdot e \leq l (1-\delta)^i \}|> \mu |Q_{\tau^i}^{(1-\delta)^i}| 
\end{equation}
for all unit vector $e \in \mathbb{R}^n$ and $i=0,\cdots,k$, then
\begin{equation} \label{est:Du_ith}
	|Du| < (1-\delta)^{i+1} \quad \text{in $Q_{\tau^{i+1}}^{(1-\delta)^{i+1}}$} 
\end{equation}
for all $i=0,\cdots,k$. Let $m\coloneqq \min\{m_1, m_2\}$, where $m_1:=[\log \varepsilon/\log(1-\delta)]$ and $m_2$ is defined by the least nonnegative integer such that \eqref{est:meas} does not hold. Here we denote $[z]$ by the integer part of $z \in \mathbb{R}$. Then, for all $p \in \mathbb{R}^n$ with $|p| \leq (1-\delta)^m$, \eqref{est:Du_ith} indicates that
\begin{equation} \label{est:Du-p1}
	|Du(x,t)-p| \leq \frac{2}{1-\delta}(|x|^{\alpha} + |t|^{\frac{\alpha}{2-\alpha \gamma}}) \quad \text{for all } (x,t) \in Q_{\tau^m}^{(1-\delta)^m}\setminus Q_{\tau^{m+1}}^{(1-\delta)^{m+1}},
\end{equation}
where $\alpha= \log(1-\delta)/\log \tau$. If we let
\begin{equation*} 
	\widetilde{u}(x,t) = \frac{1}{\tau^m(1-\delta)^m} u (\tau^m x , \tau^{2m} (1-\delta)^{-m \gamma} t),
\end{equation*}
then $\widetilde{u}$ solves 
\begin{equation*}
	\widetilde{u}_t  =(|D\widetilde{u}|^2+\varepsilon^2 (1-\delta)^{-2m})^{\gamma/2} \widetilde{F}(D^2 \widetilde{u}) \quad \text{in $Q_1$},
\end{equation*}
where
\begin{align*}
	\widetilde{F}(M)\coloneqq\frac{\tau^m}{(1-\delta)^m} F\left(\frac{(1-\delta)^m}{\tau^m}M\right) \ \text{whose ellipticity constants are the same as $F$.}
\end{align*}
Furthermore, since $\|D\widetilde{u}\|_{L^{\infty}(Q_1)}\leq 1$, we observe that
\begin{equation*}
	\underset{x \in B_1}{\textnormal{osc}} \, \widetilde{u}(x,t) \leq 2 \quad \text{for all } t \in [-1,0].
\end{equation*}
 By \Cref{lem:osc1}, we have $ {\textnormal{osc}}_{Q_1} \, \widetilde{u} \leq C$ and hence 
\begin{equation} \label{est:osc1}
	 \underset{Q_{\tau^m}^{(1-\delta)^m}}{\textnormal{osc}} \, u \leq C\tau^m(1-\delta)^m.
\end{equation}
\textbf{(Case 1: $\bm{m=m_1}$.)} In this case, we have $1/4 \leq \varepsilon^2(1-\delta)^{-2m} \leq 1$. Thus, we can apply \Cref{thm:solv}, \Cref{interiorholder}, and the Schauder estimate (see \cite{Wan92b}) to guarantee the existence of $q \in \mathbb{R}^n$ with $|q|\leq 1$ such that
\begin{equation*}
	|D\widetilde{u}(x,t) - q| \leq C(|x| +|t|^{1/2}) \quad \text{for all } (x,t) \in Q_{\tau}^{1-\delta}
\end{equation*}
and $|\widetilde{u}_t|	\leq C$ in $Q_{\tau}^{1-\delta}$. By scaling back, we have
\begin{equation}\label{est:Du-p2}
	|Du(x,t) -(1-\delta)^m q| \leq C(|x| +|t|^{1/2})  \leq C(|x|^{\alpha} + |t|^{\frac{\alpha}{2-\alpha \gamma}}) \quad \text{for all } (x,t) \in Q_{\tau^{m+1}}^{(1-\delta)^{m+1}}
\end{equation}
and 
\begin{equation} \label{est:u-u(0)} 
	|u(x,t) -u(x,0)| \leq C\tau^{-m}(1-\delta)^{m(1+\gamma)} |t| \quad \text{for all } (x,t) \in Q_{\tau^{m+1}}^{(1-\delta)^{m+1}},
\end{equation}
where we used that $\frac{\alpha}{2-\alpha \gamma} < \frac{1}{2}$ for all $\gamma >-1$. Combining \eqref{est:Du-p1} and \eqref{est:Du-p2} gives
\begin{equation*} 
	|Du(x,t) -(1-\delta)^m q|  \leq C(|x|^{\alpha} + |t|^{\frac{\alpha}{2-\alpha \gamma}}) 
\end{equation*}
for all $(x,t) \in Q_{\tau^m}^{(1-\delta)^m}$ which is extensible $Q_{1/2}$. Furthermore, it follows from \eqref{est:u-u(0)} that 
\begin{equation} \label{est:osc2}  
	|u(0,t) -u(0,0)| \leq C\tau^i(1-\delta)^i  \quad \text{for all } (x,t) \in Q_{\tau^i}^{(1-\delta)^i}, \, i > m.
\end{equation}
Similarly, \eqref{est:osc1} and \eqref{est:osc2} implies that
\begin{equation*}
	|u(0,t)-u(0,0)| \leq C|t|^{\frac{1+\alpha}{2-\alpha \gamma}}  \quad \text{for all } t \in (-1/4,0].
\end{equation*}
\textbf{(Case 2: $\bm{m=m_2}$.)} Since $m$ was the nonnegative integer such that \eqref{est:meas} does not hold, we observe
\begin{equation} \label{est:meas_cp} 
	|\{X \in Q_{\tau^m}^{(1-\delta)^m}: Du(X) \cdot e \leq l (1-\delta)^m \}| \leq \mu |Q_{\tau^m}^{(1-\delta)^m}| 
\end{equation}
for some unit vector $e \in \mathbb{R}^n$. By recalling the definition of $\widetilde{u}$, \eqref{est:meas_cp} can be written as 
\begin{equation*}
	|\{X \in Q_1: D\widetilde{u}(X) \cdot e \leq l  \}| \leq \mu |Q_1|. 
\end{equation*}
For $l= 1-\varepsilon_0^2/2$, we can see that 
\begin{equation*}
	\{X \in Q_1: |D\widetilde{u}(X) - e| > \varepsilon_0  \} \subset \{X \in Q_1: D\widetilde{u}(X) \cdot e \leq l  \}
\end{equation*}
and hence if we take $\mu = \varepsilon_1 /|Q_1|$, then we have
\begin{equation*}
	|\{X \in Q_1: |D\widetilde{u}(X) - e| > \varepsilon_0  \}| \leq |\{X \in Q_1: D\widetilde{u}(X) \cdot e \leq l  \}| \leq \varepsilon_1.
\end{equation*}
Since $l$ and $\mu$ were arbitrary, we can take $\varepsilon_0$ and $\varepsilon_1$ small enough. Thus, by \Cref{lem:small_meas}, for a positive constant $\eta$, there exists a constant $a \in \mathbb{R}$ such that 
\begin{equation*}
	|\widetilde{u}(x,t)-a - e \cdot x| \leq \eta  \quad \text{for all } (x,t) \in Q_{1/2}.
\end{equation*}
By \Cref{small_pert_sol}, there exists $q \in \mathbb{R}^n$ such that
\begin{equation*}
	|D\widetilde{u}(x,t) - q| \leq C(|x| +|t|^{1/2}) \quad \text{for all } (x,t) \in Q_{\tau}^{1-\delta}
\end{equation*}
and $|\widetilde{u}_t|	\leq C$ in $Q_{\tau}^{1-\delta}$. Finally, the desired conclusion is obtained in the same way as in the first case.
\end{proof}
\subsection{Proof of \Cref{thm:C1alpha}}
We begin with uniform boundary estimates for smooth solutions of \eqref{eq:reg}.
\begin{lemma}[Boundary estimates]\label{bdryest}
	Let $u \in C(\overline{Q_1}) \cap C^{\infty}(Q_1)$ be a solution of \eqref{eq:reg} with $\gamma>-1$. Let $\varphi \coloneqq u|_{\partial_p Q_1}$ and let $\omega$ be a modulus of continuity of $\varphi$. Then there exists another modulus of continuity $\omega^{\ast}$ which depends only on $n$, $\gamma$, $\lambda$, $\Lambda$, $\omega$, and $\|\varphi\|_{L^{\infty}(\partial_pQ_1)}$ such that
	\begin{align*}
		|u(X)-u(Y)| \leq \omega^{\ast}(d(X, Y))
	\end{align*}
	for all $X, Y \in \overline{Q_1}$.
\end{lemma}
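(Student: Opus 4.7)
The plan is to combine a barrier argument at the parabolic boundary with the uniform interior Lipschitz and time-H\"older bounds of \Cref{lipschitz} and \Cref{half_hol_t}, keeping all constants independent of $\varepsilon\in(0,1)$. The argument has three steps.

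\emph{Step 1 (Smoothing the modulus).} Replace $\omega$ by a $C^2$ concave nondecreasing function $\tilde\omega$ with $\tilde\omega(0)=0$, $\tilde\omega\geq\omega$ on $[0,\sqrt{5}]$, and $\tilde\omega\leq 4\|\varphi\|_{L^{\infty}(\partial_p Q_1)}$; such a smooth majorant is standard.

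\emph{Step 2 (Boundary barriers).} Fix $X_0=(x_0,t_0)\in\partial_p Q_1$. The goal is a modulus $\omega_0$ depending only on the stated data with $|u(X)-u(X_0)|\leq\omega_0(d(X,X_0))$ for all $X\in\overline{Q_1}$. If $X_0\in\partial_s Q_1\cup\partial_c Q_1$, invoke the exterior-sphere condition: pick $y\in\mathbb{R}^n$ with $|y-x_0|=1$ and $B_1(y)\cap B_1=\emptyset$, set $d(x)=|x-y|-1\geq 0$, and consider
\begin{equation*}
w^+(X)=\varphi(X_0)+A\,\tilde\omega\!\left(d(x)+\sqrt{t_0-t+d(x)^2}\right)+B\,\bigl(d(x)+(t_0-t)\bigr),
\end{equation*}
following the template of \Cref{lem:bdry_Du} but with the concave $\tilde\omega$ in place of the logarithm. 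Concavity of $\tilde\omega$ supplies a non-positive contribution to $D^2 w^+$ in the normal direction, while the linear term $B\,d(x)$ keeps $|Dw^+|\geq B>0$, so the prefactor $(\varepsilon^2+|Dw^+|^2)^{\gamma/2}$ admits bounds independent of $\varepsilon$ (crucial when $\gamma<0$). Choosing $B$ large depending only on $n,\lambda,\Lambda,\gamma,\|\varphi\|_{L^{\infty}}$ makes $w^+$ a classical supersolution of \eqref{eq:reg}, and $A$ large forces $w^+\geq\varphi$ on $\partial_p Q_1$ via $\tilde\omega\geq\omega$. If $X_0\in\partial_b Q_1$, the simpler
\begin{equation*}
w^+(X)=\varphi(X_0)+A\,\tilde\omega(|x-x_0|)+B(t-t_0)
\end{equation*}
is already $C^2$ away from $x=x_0$; since $|Dw^+|$ and $D^2 w^+$ are bounded and $\varepsilon<1$, the choice of $B$ large dominates $(\varepsilon^2+|Dw^+|^2)^{\gamma/2}F(D^2 w^+)$ uniformly in $\varepsilon$, and $A$ large majorizes $\varphi$ on $\partial_p Q_1$. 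A symmetric construction yields $w^-$. The comparison principle \Cref{thm:comp1}, applicable to the continuous proper operator $(p,M)\mapsto(\varepsilon^2+|p|^2)^{\gamma/2}F(M)$, then gives $w^-\leq u\leq w^+$ in $Q_1$ and hence the boundary modulus $\omega_0$.

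\emph{Step 3 (Global modulus).} For $X,Y\in\overline{Q_1}$, put $s=d(X,Y)$ and $r=s^{1/2}$. If $\min\{d(X,\partial_p Q_1),d(Y,\partial_p Q_1)\}<2r$, one of the points is within $2r$ of $\partial_p Q_1$; pick $X_0\in\partial_p Q_1$ within $3r$ of both and apply Step~2 twice:
\begin{equation*}
|u(X)-u(Y)|\leq 2\omega_0(3r)=2\omega_0(3s^{1/2}).
\end{equation*}
Otherwise both points lie in a common subcylinder $Q_r(Z)\subset Q_1$; rescale via $\tilde u(x,t)=r^{-1}u(z+rx,\tau_Z+r^2 t)$, which satisfies a regularized equation with the same ellipticity constants and $\|\tilde u\|_{L^{\infty}(Q_1)}\leq r^{-1}\|\varphi\|_{L^{\infty}}$ by \Cref{lem:max_prin}. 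Applying \Cref{lipschitz} and \Cref{half_hol_t} to $\tilde u$ and scaling back yields $|u(X)-u(Y)|\leq L(r^{-1}\|\varphi\|_{L^{\infty}})\,s$. Setting
\begin{equation*}
\omega^{\ast}(s)=\min\!\Bigl\{2\|\varphi\|_{L^{\infty}},\;2\omega_0(3s^{1/2})+s\,L(s^{-1/2}\|\varphi\|_{L^{\infty}})\Bigr\}
\end{equation*}
delivers the required global modulus with the prescribed dependence.

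\emph{Main obstacle.} The principal technical hurdle is the side/corner barrier construction in Step~2: a single $C^2$ function $w^+$ must simultaneously (i) majorize $\varphi$ on $\partial_p Q_1$ using only the modulus $\omega$, (ii) be a classical supersolution of the $\varepsilon$-regularized equation with constants independent of $\varepsilon\in(0,1)$, and (iii) control the oscillation quantitatively at $X_0$. Concavity of $\tilde\omega$ is the decisive ingredient — it supplies the negative Hessian contribution that compensates $F(D^2 w^+)$, parallel to the role of the logarithmic profile in \Cref{lem:bdry_Du}, while the linear penalty $B\,d(x)$ keeps the gradient bounded away from zero so the singular case $-1<\gamma<0$ is handled without $\varepsilon$-dependent blow-up.
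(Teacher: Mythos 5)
Your overall skeleton (boundary barriers plus the interior estimates of \Cref{lipschitz} and \Cref{half_hol_t}, patched together by splitting into the boundary-adjacent and interior cases) is exactly the strategy of the argument the paper cites, and Steps 1 and 3 are fine. The gap is in Step 2: the barriers you write down are not barriers. First, the majorization of $\varphi$ on $\partial_p Q_1$ fails for side points: with $y=2x_0$ one computes $|x-y|=\sqrt{1+2|x-x_0|^2}$ for $x\in\partial B_1$, so $d(x)\approx|x-x_0|^2$ near $x_0$; hence $A\,\tilde\omega(d(x)+\cdots)$ with any \emph{fixed} $A$ cannot dominate $\omega(|x-x_0|)$ near $x_0$ (take $\omega(r)=r^{1/2}$: you would need $A\gtrsim |x-x_0|^{-1/2}$). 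Second, the supersolution inequality $w^+_t\geq(\varepsilon^2+|Dw^+|^2)^{\gamma/2}F(D^2w^+)$ fails: your side barrier has $w^+_t\leq -B<0$, while $F(D^2w^+)$ need not be negative — the tangential Hessian of $d$ contributes $+\,(A\tilde\omega'+B)\frac{n-1}{|x-y|}\Lambda$ to $\mathcal{M}^+$, and "concavity of $\tilde\omega$" gives only $\tilde\omega''\leq 0$, with no quantitative lower bound on $-\tilde\omega''$ (a Lipschitz modulus has $\tilde\omega''\equiv 0$). This is precisely why \Cref{lem:bdry_Du} uses the logarithmic profile $f$, whose key property is $f''=-A(f')^2$, not mere concavity. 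Your bottom barrier has the same defect in a different guise: $D^2\bigl(A\tilde\omega(|x-x_0|)\bigr)$ has tangential eigenvalues $A\tilde\omega'(r)/r\to\infty$ as $r\to 0$ unless $\tilde\omega'(0)=0$, so "$D^2w^+$ is bounded" is false and no finite $B$ dominates. (There is also the cosmetic issue that $\sqrt{t_0-t+d(x)^2}$ is imaginary for $t>t_0+d(x)^2$, which occurs for side points with $t_0<0$.)

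The standard repair, and what the cited argument actually does, is the two-parameter $\epsilon$--$\delta$ construction: for each $\delta>0$ bound the boundary data by
\begin{equation*}
\varphi(X)\leq\varphi(X_0)+\omega(\delta)+\frac{2\|\varphi\|_{L^\infty(\partial_pQ_1)}}{\delta^2}\,d(X,X_0)^2\quad\text{on }\partial_pQ_1,
\end{equation*}
and then build a genuine supersolution above the smooth quadratic majorant $|x-x_0|^2+|t-t_0|$ by adding a large multiple of the strongly concave exterior-sphere profile $f(d(x))=A^{-1}\log(1+Ad(x)/B)$ from \Cref{lem:bdry_Du} (whose $f''=-A(f')^2$ absorbs the positive tangential terms and whose gradient lower bound handles the singular prefactor uniformly in $\varepsilon$, as you correctly anticipated). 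Applying \Cref{thm:comp1} and then optimizing over $\delta$ produces the boundary modulus $\omega_0$, after which your Step 3 goes through unchanged.
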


\begin{proof}
	Since the argument essentially follows the lines in \cite[Appendix]{IJS19} with small modification in calculation, we omit the proof.
\end{proof}

The following lemma illustrates that a viscosity solution $u$ of \eqref{eq:model} can be approximated by a sequence of regularized solutions $\{u^{\varepsilon}\}$ of \eqref{eq:reg}. 
\begin{theorem}\label{lem:approx}
	Let $\varphi \in C(\overline{Q_1})$. Then the Dirichlet problem \eqref{eq:mean} is uniquely solvable in $C(\overline{Q_1}) \cap C^{\infty}(Q_1)$. In particular, for any $\varepsilon>0$, the regularized Dirichlet problem 
	\begin{equation*}
		\left\{
		\begin{aligned}
			\partial_tu^{\varepsilon}&= (\varepsilon^2+|Du^{\varepsilon}|^2 )^{\gamma/2}F^{\varepsilon}(D^2u^{\varepsilon}) && \text{in $Q_1$}\\
			u^{\varepsilon}&=u && \text{on $\partial_p Q_1$},
		\end{aligned}\right.
	\end{equation*}
	is uniquely solvable in $C(\overline{Q_1}) \cap C^{\infty}(Q_1)$.
\end{theorem}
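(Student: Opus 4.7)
The plan is approximation by classical solutions: smooth the boundary data to make it compatible, invoke Theorem \ref{thm:solv} to obtain $u_k \in C^{2,\beta}(\overline{Q_1})$, and pass to the limit. The argument is identical for \eqref{eq:mean} and for its regularized counterpart with $\varepsilon>0$ and $F^{\varepsilon}$, since $F^{\varepsilon}$ satisfies \ref{F1}--\ref{F3} with the same ellipticity constants and the proof of Theorem \ref{thm:solv} applies verbatim to the operator $(\varepsilon^2+|p|^2)^{\gamma/2}F^{\varepsilon}(M)$; I therefore describe the argument for \eqref{eq:mean}.

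First I approximate $\varphi$ by a sequence $\varphi_k \in C^{2,\beta}(\overline{Q_1})$ with $\varphi_k \to \varphi$ uniformly on $\overline{Q_1}$ and satisfying the corner compatibility $(\varphi_k)_t = (1+|D\varphi_k|^2)^{\gamma/2} F(D^2 \varphi_k)$ on $\partial_c Q_1$. This is done by mollifying a Tietze extension of $\varphi$ to produce smooth $\widetilde{\varphi}_k \to \varphi$ uniformly, and then adding a correction of the form $\chi(x)(t+1)\eta(t) h_k(x)$, where $\eta$ is a cutoff with $\eta(-1)=1$, $\chi$ is supported in a neighborhood of $\partial B_1$, and $h_k$ is a smooth extension to $\overline{B_1}$ of the defect on $\partial B_1$ (so that its contribution to $(\varphi_k)_t$ at $t=-1$ cancels the defect). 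The correction vanishes on $\partial_b Q_1 \cup \partial_c Q_1$, and its contribution on $\partial_s Q_1$ can be made $o(1)$ by diagonally tuning the mollification scale against the corner defect.

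Given the $\varphi_k$, Theorem \ref{thm:solv} produces unique classical solutions $u_k \in C^{2,\beta}(\overline{Q_1})$ with $u_k|_{\partial_p Q_1} = \varphi_k$. The comparison principle (Theorem \ref{thm:comp1}) gives the uniform bound $\|u_k\|_{L^\infty(Q_1)} \le \|\varphi\|_{L^\infty} + o(1)$, and Lemma \ref{bdryest} supplies a common modulus of continuity $\omega^{\ast}$ for $\{u_k\}$ on $\overline{Q_1}$ depending only on structural data and the shared modulus of $\{\varphi_k\}$ on $\partial_p Q_1$. In the interior, Lemma \ref{lipschitz} bounds $|Du_k|$ on compacts of $Q_1$; on any such compact the equation is uniformly parabolic with smooth nonlinearity, so classical interior parabolic Schauder estimates together with a differentiation-and-bootstrap argument yield uniform $C^m_{\mathrm{loc}}(Q_1)$ bounds for every $m$. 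Arzel\`a--Ascoli then extracts a subsequence converging in $C(\overline{Q_1}) \cap C^\infty_{\mathrm{loc}}(Q_1)$ to a limit $u$ that solves \eqref{eq:mean} classically in $Q_1$ and equals $\varphi$ on $\partial_p Q_1$; uniqueness is immediate from Theorem \ref{thm:comp1}.

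The main obstacle is the compatibility correction: the condition on $\partial_c Q_1$ couples the traces on $\partial_b Q_1$ and $\partial_s Q_1$, so naive mollification is not enough. Once the corner is corrected as above, everything else is a standard compactness-plus-stability passage to the limit, with Lemma \ref{bdryest} ensuring that no boundary mass is lost in the limit.
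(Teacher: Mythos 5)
Your proposal is correct and follows essentially the same route as the paper: approximate $\varphi$ by corner-compatible $C^{2,\beta}$ data, solve via Theorem~\ref{thm:solv}, combine uniform interior estimates with the boundary modulus from Lemma~\ref{bdryest}, and pass to the limit by Arzel\`a--Ascoli and stability. The only step worth making explicit is that before invoking Schauder on a compact subset one should first apply the uniform interior $C^{1,\alpha}$ estimate (Lemma~\ref{interiorholder}), so that the coefficient $(1+|Du_k|^2)^{\gamma/2}$ is uniformly H\"older and Evans--Krylov (using \ref{F2}) applies; this is precisely the intermediate estimate the paper inserts between Lemma~\ref{lipschitz} and the bootstrap.
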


\begin{proof}
	Fix $\beta \in (1/2, 1)$. By arguing as in \cite{LSU68, Lie96}, choose a sequence of functions $\{\varphi_k\}_{k=1}^{\infty}$ such that $\varphi_k \in C^{2, \beta}(\overline{Q_1})$,  $\|\varphi_k-\varphi\|_{L^{\infty}(Q_1)} \to 0$, and $\varphi_k$ satisfies the compatibility condition at the corner points:
	\begin{align*}
		\partial_t \varphi_k=(1+|D\varphi_k|^2)^{\gamma/2}F(D^2 \varphi_k) \quad \text{on $\partial_c Q_1$}.
	\end{align*}
	By applying \Cref{thm:solv}, there exists a unique solution $u_k \in C^{2, \beta}(\overline{Q_1})$ of the Dirichlet problem \eqref{eq:mean}. Moreover, an application of \Cref{lipschitz} yields that, for $r \in (0,1)$, there exists a constant $C>0$ which depends only on $n$, $\lambda$, $\Lambda$, $\gamma$, $\|\varphi\|_{L^{\infty}(Q_1)}$, and $r$ such that
	\begin{align*}
		\|Du_k\|_{L^{\infty}(Q_{(r+1)/2})} \leq C.
	\end{align*}
	Then, by applying \Cref{interiorholder}, there exist constants $\alpha \in (0,1)$ and $C>0$ which depend only on $n, \lambda, \Lambda, \gamma$, $\|\varphi\|_{L^{\infty}(Q_1)}$, and $r$ such that 
	\begin{align*}
		\|u_k\|_{C^{1, \alpha}(\overline{Q_r})} \leq C.
	\end{align*}
	Thus, by \Cref{bdryest}, Arzela-Ascoli theorem, and the standard diagonal argument, we can extract a subsequence  $\{u_k\}$ which converges to a limit function $u \in C(\overline{Q_1})$ such that 
	\begin{enumerate}[label=(\roman*)]
		\item $u \in C^{1, \alpha_{r}}(\overline{Q_r})$ for $r \in (0,1)$;
		
		\item $u_k \to u$ uniformly in $C^{1, \alpha'_r}(\overline{Q_r})$ for any $\alpha'_r \in (0, \alpha_r)$.
	\end{enumerate}
	 Since $u_k$ satisfies
	\begin{equation*} 
		\left\{ \begin{aligned}
			\partial_t u_k &= (1+|Du_k|^2 )^{\gamma/2}F(D^2u_k) &&\text{in $Q_1$}\\
			u_k&=\varphi_k &&\text{on $\partial_p Q_1$},
		\end{aligned}\right.
	\end{equation*}
	the stability theorem yields that $u$ satisfies
	\begin{equation*} 
		\left\{ \begin{aligned}
			u_t&= (1+|Du|^2 )^{\gamma/2}F(D^2u) &&\text{in $Q_1$}\\
			u&=\varphi &&\text{on $\partial_p Q_1$}.
		\end{aligned}\right.
	\end{equation*}
	Finally, the Schauder theory and the bootstrap argument guarantee the smoothness of $u$, i.e.,
	\begin{align*}
		u \in C^{1, \alpha} \implies Du \in C^{\alpha} \implies u \in C^{2, \alpha} \implies Du \in C^{1, \alpha} \implies u \in C^{3, \alpha} \quad \cdots.
	\end{align*}
	We refer to \cite{CC95, GT01, Lie96, Wan92b} for details.
\end{proof}

%

We are now ready to prove our main theorem \Cref{thm:C1alpha}.
\begin{proof}[Proof of \Cref{thm:C1alpha}]
	Without loss of generality, we may assume that $u \in C(\overline{Q_1})$. By \Cref{lem:approx}, there exists a unique solution $u^{\varepsilon} \in C(\overline{Q_1}) \cap C^{\infty}(Q_1)$ of 
	\begin{equation*}
		\left\{
		\begin{aligned}
			\partial_tu^{\varepsilon}&= (\varepsilon^2+|Du^{\varepsilon}|^2 )^{\gamma/2}F^{\varepsilon}(D^2u^{\varepsilon}) && \text{in $Q_1$}\\
			u^{\varepsilon}&=u && \text{on $\partial_p Q_1$}.
		\end{aligned}\right.
	\end{equation*}
	As in the proof of \Cref{lem:approx}, we can extract a subsequence $\{u^{\varepsilon_k}\}$ which converges to a limit function $\bar{u} \in C(\overline{Q_1})$ satisfying
		\begin{equation*} 
		\left\{ \begin{aligned}
			\bar{u}_t&=|D\bar{u}|^{\gamma}F(D^2\bar{u}) &&\text{in $Q_1$}\\
			\bar{u}&=u &&\text{on $\partial_p Q_1$}.
		\end{aligned}\right.
	\end{equation*}
	We note that since $u^{\varepsilon_k}$ satisfies \eqref{eq:reg} with $\varepsilon_k \to 0$, one should replace the interior H\"older estimate \Cref{interiorholder} by a uniform one \Cref{uniformholder} in the argument. 
	
	Then the comparison principle \Cref{thm:comp2} tells us $\overline{u}=u$. Moreover, again by \Cref{lipschitz} and \Cref{uniformholder}, there exists a constant $\alpha \in (0, 1)$ which depends only on $n$, $\lambda$, $\Lambda$, $\gamma$, $\|u\|_{L^{\infty}(Q_1)}$, and $\|F\|_{C^{1,1}(\mathcal{S}^n)}$ such that
	\begin{align*}
		\|u^{\varepsilon_k}\|_{C^{1, \alpha}(\overline{Q_{1/2}})} \leq C,
	\end{align*}
where $C>0$ is a constant depending only on $n$, $\lambda$, $\Lambda$, $\gamma$, $\|u\|_{L^{\infty}(Q_1)}$, and $\|F\|_{C^{1,1}(\mathcal{S}^n)}$. By letting $k \to \infty$, we finish the proof.
\end{proof}
	

\bibliographystyle{abbrv}
\bibliography{citation}

\end{document}